\newcommand\Z{\mathbb{Z}}
\newcommand\Q{\mathbb{Q}}
\def\calO{{\mathcal{O}}}
\def\p{{\mathfrak{p}}}
\def\q{{\mathfrak{q}}}
\def\P{{\mathfrak{P}}}
\DeclareMathOperator{\PGL}{PGL}
\DeclareMathOperator{\GL}{GL}
\DeclareMathOperator{\id}{id}
\newcommand{\invlim}{\mathop{\displaystyle \varprojlim}\limits}
\newtheorem{theorem}{Theorem}[section]
\newtheorem{definition}[theorem]{Definition}
\newtheorem{lemma}[theorem]{Lemma}
\newtheorem{proposition}[theorem]{Proposition}
\newtheorem{proposition-definition}[theorem]{Proposition-Definition}
\newtheorem{corollary}[theorem]{Corollary}
\newtheorem{conjecture}[theorem]{Conjecture}
\newtheorem{question}[theorem]{Question} 
\theoremstyle{definition}
\theoremstyle{remark}
\newtheorem*{remark}{Remark}
\begin{document}
\title[Galois theory of quadratic rational functions]{Galois theory of quadratic rational functions}
\author{Rafe Jones}
\author{Michelle Manes}
\thanks{The first author's research was partially supported by NSF grant DMS-0852826, and the second author's by NSF grant DMS-1102858.}

\begin{abstract}
For a number field $K$ with absolute Galois group $G_K$, we consider the action of $G_K$ on the infinite tree of preimages of $\alpha \in K$ under a degree-two rational function $\phi \in K(x)$, with particular attention to the case when $\phi$ commutes with a non-trivial Mobius transfomation.  In a sense this is a dynamical systems analogue to the $\ell$-adic Galois representation attached to an elliptic curve, with particular attention to the CM case.  Using a result about the discriminants of numerators of iterates of $\phi$, we give a criterion for the image of the action to be as large as possible.  This criterion is in terms of the arithmetic of the forward orbits of the two critical points of $\phi$.  In the case where $\phi$ commutes with a non-trivial Mobius transfomation, there is in effect only one critical orbit, and we give  a modified version of our maximality criterion.  We prove a Serre-type finite-index result in many cases of this latter setting.  
\end{abstract}

\subjclass[2000]{37P15, 11R32}

\maketitle

\section{Introduction} \label{intro}

Let $K$ be a number field, and $\phi \in K(z)$ a rational function of degree $d \geq 2$.  Put 
$$\phi^n = \underbrace{\phi \circ \phi \circ \cdots \circ \phi}_n,$$   
and denote by $\phi^{-n}(\alpha)$ the set of preimages of the point $\alpha$ under the map $\phi^n$.
To the pair $(\phi, \alpha)$, where $\alpha \in K$, we associate a tree of preimages: let $V_n = \phi^{-n}(\alpha)$, and give the set $T_\alpha = \bigcup_{n \geq 1} V_n$ the structure of a tree with root $\alpha$ by assigning edges according to the action of $\phi$.   See Figure \ref{fig: kisone} for examples.  Because elements of $\Gal(\overline{K}/K)$ commute with $\phi$, we obtain a map 
\[
\rho: \Gal(\overline{K}/K) \to \Aut(T_\alpha),
\]
 where $\Aut(T_\alpha)$ denotes the group of tree automorphisms of $T_\alpha$.  We call $\rho$ the {\em arboreal Galois representation} attached to $(\phi, \alpha)$, and the main goal of the present work is to study the image of $\rho$ in the case of degree-two rational functions.

Indeed, given a prime $\ell$ and an elliptic curve $E$ defined over a number field $K$, we obtain the $\ell$-adic Galois representation $\rho_E : \Gal(\overline{K}/K) \to \GL_2(\Z_\ell)$ in much the same manner as the previous paragraph.  The $\ell$-adic Tate module $T_\ell(E)$ is the inverse limit of the sets $[\ell]^{-n}(O)$, and the action of $\Gal(\overline{K}/K)$ on $T_\ell(E)$ gives $\rho_E$.  In this context Serre \cite{serre1} proved that for a given elliptic curve $E$ without complex multiplication the image of $\rho_E$ has finite index in $\GL_2(\Z_\ell)$ for all $\ell$, and $\rho_E$ is surjective for all but finitely many $\ell$.  In the case where $E$ has complex multiplication and the full endomorphism ring is contained in the ground field $K$, similar statements hold (see e.g. \cite[p. 302]{serre1}), provided that $\GL_2(\Z_\ell)$ is replaced by the largest subgroup of $\GL_2(\Z_\ell)$ that commutes with the action on the Tate module induced by the extra endomorphisms of $E$.  In general this is a Cartan subgroup.  

In this paper we formulate a similar conjecture for quadratic rational functions 
and prove it in certain cases.   To do so, we develop a general theory of arboreal representations associated to quadratic rational functions.  In contrast to the situation for $\rho_E$, there appears to be no finite quotient $G$ of the target group such that surjectivity of the the induced representation into $G$ implies surjectivity of $\rho$ (see \cite{itconst} for details).  Rather, infinitely many conditions must be checked, and by studying the ramification of $\rho$ we give a formulation of these in terms of the critical orbits of $\phi$ (Corollary \ref{maxcor2} and Theorem \ref{specmaxcor1}).

When $\phi$ commutes with a non-trivial $f \in \PGL_2(K)$ such that $f(\alpha) = \alpha$, the Galois action on $T_\alpha$ must commute with the action of $f$.  Define the automorphism group of $\phi$ to be 
\begin{align*}
A_{\phi}& = \{ f \in \PGL_2(\overline K) \colon \phi\circ f = f\circ \phi \}, 
\text{ and define } &A_{\phi, \alpha} &= \{f \in A_\phi : f(\alpha) = \alpha\}.
 \end{align*}
 We know that $A_{\phi}$ is finite by~\cite[Proposition 4.65]{jhsdynam}.  
 
   Let $G_\infty$ denote the image of $\rho: \Gal(\overline{K}/K) \to \Aut(T_\alpha)$, and let 
$C_\infty$ denote the centralizer of the action of $A_{\phi, \alpha}$ on $\Aut(T_\alpha)$.    
Recall that $\phi(x) \in K(x)$ is said to be {\em post-critically finite} if the orbit of each of the critical points of $\phi$ is finite.  Such maps have the property that the extension $K(T_\alpha)/K$ is ramified above only finitely many primes of $K$ (see \cite{HC} and the remark following Theorem \ref{discthm}), and thus $G_\infty$ is topologically finitely generated.  One therefore expects it to have infinite index in $C_\infty$, though this is not known in general.  

\begin{conjecture} \label{genmain}
Let $\phi \in K(z)$ have degree $d = 2$ and let $\alpha \in K$.  Suppose that $\phi$ is not post-critically finite.  Then  $[C_{\infty} : G_{\infty}]$ is finite.
\end{conjecture}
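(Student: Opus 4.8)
The plan is to reduce the finite-index assertion to a level-by-level stabilization and then to feed in a Zsigmondy-type statement about the critical orbits. Write $K_n$ for the field generated over $K$ by $\phi^{-n}(\alpha)$, so that $G_n := \Gal(K_n/K)$ is the image of $\rho$ in $\Aut(T_n)$, where $T_n$ is the depth-$n$ truncation of $T_\alpha$, and let $C_n$ be the image of $C_\infty$ in $\Aut(T_n)$. Since $G_n \twoheadrightarrow G_{n-1}$ and $C_n \twoheadrightarrow C_{n-1}$ for every $n$, and the kernel of $\Aut(T_n)\to\Aut(T_{n-1})$ is the elementary abelian group $(\Z/2\Z)^{V_{n-1}}$ indexed by the vertices at level $n-1$, one gets $[C_\infty:G_\infty]=\prod_{n\ge 1}[M_n^C:M_n^G]$, where $M_n^\bullet:=\ker(\bullet_n\to\bullet_{n-1})$. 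So I would reduce the conjecture to showing that $M_n^G=M_n^C$ for all $n$ past some bound $N$: the first $N$ factors are automatically finite, so a bounded number of genuine failures does no harm. The maximality criteria of Corollary \ref{maxcor2} and Theorem \ref{specmaxcor1} already pin down the possible failure at each level, so the real task is to bound the number of levels at which it occurs.

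Next I would describe $M_n^G$ by Kummer theory. The extension $K_n/K_{n-1}$ is generated by the square roots of the discriminants $\delta(\beta)$ of the quadratic equations $\phi(x)=\beta$, one for each vertex $\beta\in V_{n-1}$; hence $\dim_{\Ftwo}M_n^G$ equals the $\Ftwo$-dimension of the subgroup of $K_{n-1}^\times/(K_{n-1}^\times)^2$ spanned by the classes $[\delta(\beta)]$, and $M_n^G=M_n^C$ holds exactly when the only linear relations among these classes are the ones forced by the $A_{\phi,\alpha}$-symmetry defining $C_\infty$. An unforced relation is an identity $\prod_{\beta\in S}\delta(\beta)\in(K_{n-1}^\times)^2$ for a $G_{n-1}$-stable multiset $S$ of vertices; taking $S$ to be a union of $G_{n-1}$-orbits, each partial product lies in $K$, and by the discriminant formula of Theorem \ref{discthm} it equals, up to squares and a fixed finite set of bad primes, a product of the critical-orbit values $\phi^j(c)-\alpha$ for $1\le j\le n$ with $c$ ranging over the one or two critical points of $\phi$. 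Thus I would convert ``no unforced relation at level $n$'' into ``no multiplicative relation modulo squares among the values $\phi^j(c)-\alpha$ with $j$ near $n$''.

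The essential input I then need is an arithmetic fact about the critical orbits. Since $\phi$ is not post-critically finite, at least one critical point $c$ has infinite orbit, equivalently $\hat h_\phi(c)>0$; I want that for all large $j$ the numerator of $\phi^j(c)-\alpha$ carries a \emph{primitive prime divisor} to odd multiplicity --- a prime of $K$ occurring to odd order in $\phi^j(c)-\alpha$ but in no $\phi^i(c')-\alpha$ with $i<j$ and $c'$ a critical point, and lying outside the bad set. Granting this, no multiplicative relation modulo squares among the $\phi^j(c)-\alpha$ can involve an index $j$ beyond the onset of primitive primes, so every unforced relation sits below some bound $N$, whence $M_n^G=M_n^C$ for $n>N$ and $[C_\infty:G_\infty]$ is finite. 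To produce the primitive divisors I would work with the canonical height $\hat h_\phi$ on $\PP^1$: decompose the global height of $\phi^j(c)$ into local contributions, exploit that $\hat h_\phi(\phi^j(c))=2^j\hat h_\phi(c)$ grows geometrically while the contribution of any single prime grows at most linearly in $j$, and run a Siegel-type counting argument to force a new prime.

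The hard part will be precisely this last step. When $\phi$ is a polynomial, or more generally when the critical orbit stays in the affine line with controlled reduction, primitive-divisor results of Zsigmondy type are available and the argument closes; but for a genuinely rational $\phi$ with a wandering critical point the uniform version needed here lies beyond current techniques --- it is entangled with dynamical analogues of Siegel's and Vojta's theorems --- which is why Conjecture \ref{genmain} is only a conjecture. The situation improves markedly when $\phi$ commutes with a nontrivial M\"obius transformation: there is then effectively a single critical orbit carrying extra arithmetic structure, and under mild hypotheses guaranteeing that this orbit escapes $p$-adically (or contains a $K$-point of infinite order with tame reduction) the primitive-divisor input can be supplied, which is how one obtains the Serre-type finite-index theorem in the many cases promised in the abstract.
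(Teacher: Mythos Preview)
Your proposal is an honest strategic outline rather than a proof, and you correctly flag that the statement is a conjecture whose general case is out of reach; so there is no ``proof'' to compare against the paper's. Where comparison is meaningful is in the special case $\#A_{\phi,\alpha}>1$, which the paper reduces to $\phi(x)=k(x^2+1)/x$ with $\alpha=0$ and then handles (under hypotheses) via Theorem~\ref{finiteindex}. Here your sketch and the paper diverge in two substantive ways.

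First, the arithmetic input. You propose to obtain primitive prime divisors of odd multiplicity in the critical-orbit values by a canonical-height decomposition and a Siegel-type count. The paper instead observes that ``$c\delta_n$ is a square in $K$'' amounts to a $K$-point on the fixed genus-$3$ curve $y^2=c\,k^{-1}p_3^*(x)$, and invokes Faltings' theorem to conclude that only finitely many $n$ can satisfy it. This is both simpler and better adapted to the goal: it directly controls squareness (i.e.\ parity of all valuations simultaneously), whereas a Zsigmondy-type statement yields a new prime but does not by itself control its multiplicity.

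Second, the structural feature that makes the CM-like case tractable. You attribute it to there being ``effectively one critical orbit'' and to the orbit ``escaping $p$-adically''. The paper's actual lever is different and more specific: the base point $\alpha=0$ is \emph{preperiodic} for $\phi$ (it maps to the fixed point $\infty$), which forces the $\delta_n$ to be pairwise coprime away from primes dividing $k$ (Lemma~\ref{relprime}). Combined with the finiteness of $U_{K,S}/U_{K,S}^2$, this is what produces, for all but finitely many $n$, a prime of odd valuation in $\delta_n$ that divides no earlier $\delta_m$ and avoids $2$, $b$, $k$, so that Theorem~\ref{specmaxcor1} applies. Your heuristic misses that it is the preperiodicity of $\alpha$, not any property of the critical point, that drives this coprimality.
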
 

When $A_{\phi,\alpha}$ is trivial, Conjecture~\ref{genmain} has been proven only in the case of two families of quadratic polynomials~\cite[Theorem 1.1 and first remark on p. 534]{quaddiv}, namely 
\[
f(x) = x^2 - kx + k, k \in \Z \quad \text{ and }\quad
f(x) = x^2 + kx - 1, k \in \Z \smallsetminus \{0, 2\},
\]
 for $\alpha = 0$. The key feature of these families is that the orbit of $0$ is finite but not periodic, a property they share with the family in Conjecture~\ref{mainconj} below. We establish here the first similar result for a rational function that is not conjugate to a polynomial. 
\begin{theorem} \label{nonpoly}
Let ${\displaystyle \phi(x) = \frac{1+3x^2}{1-4x-x^2}}$ and $\alpha = 0$. Then $G_\infty \cong \Aut(T_\alpha)$. 
\end{theorem}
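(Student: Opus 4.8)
The plan is to combine the maximality criterion of Corollary~\ref{maxcor2} (in the case $A_\phi$ trivial) with the discriminant formula of Theorem~\ref{discthm}. First I would record the dynamics of $\phi$. The numerator of $\phi'$ factors as $-4(3x+1)(x-1)$, so the critical points are $c_1=1$ and $c_2=-1/3$. Since $\phi(1)=-1$ and $\phi(-1)=1$, the point $c_1$ is periodic of exact period $2$, whereas the forward orbit $-\tfrac13\mapsto\tfrac35\mapsto-\tfrac{13}{11}\mapsto\tfrac{157}{131}\mapsto\cdots$ of $c_2$ is infinite; hence $\phi$ is not post-critically finite, and since neither critical point is fixed, $\phi$ is not conjugate to a polynomial. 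A direct check shows $A_{\phi,0}$ is trivial, so $C_\infty=\Aut(T_0)$ and the assertion is exactly that $\rho$ is surjective. Moreover $0$ lies in the forward orbit of no critical point (the orbit of $c_1$ is $\{1,-1\}$, and if $\phi^m(c_2)=0$ then $\phi^{m-1}(c_2)$ would be a rational root of $1+3x^2$, which is impossible), so the numerator of each $\phi^n$ is separable and $T_0$ is the complete rooted binary tree, with $\Aut(T_0)$ the infinite iterated wreath product of $\Z/2\Z$.

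\textbf{Reduction to non-squareness along the critical orbit.}
Next I would apply Corollary~\ref{maxcor2}: it reduces $G_\infty=\Aut(T_0)$ to showing that, for every $n\ge 1$, a certain class in $\Q^*/(\Q^*)^2$ attached to level $n$ is nontrivial, and Theorem~\ref{discthm} expresses this class through the critical orbits. The two preimages of $\gamma$ under $\phi$ are the roots of $(3+\gamma)x^2+4\gamma x+(1-\gamma)$, with discriminant
\[
\Delta(\gamma)\;=\;4(5\gamma-3)(\gamma+1)\;=\;20\bigl(\gamma-\phi(c_2)\bigr)\bigl(\gamma-\phi(c_1)\bigr),
\]
so the roots of $\Delta$ are precisely the critical values $\phi(c_1)=-1$ and $\phi(c_2)=\tfrac35$. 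Writing $N_{n-1}$ for the numerator of $\phi^{n-1}$, one gets, for $n\ge 2$,
\[
\prod_{\gamma\in\phi^{-(n-1)}(0)}\Delta(\gamma)\;\equiv\;N_{n-1}(-1)\cdot N_{n-1}(\tfrac35)\pmod{(\Q^*)^2},
\]
since $20^{2^{n-1}}$ and the squared leading coefficient are squares. Now $N_{n-1}(-1)$ is completely explicit — it is governed entirely by the $2$-cycle $1\leftrightarrow-1$ and lies in one of finitely many square classes — while $N_{n-1}(\tfrac35)$ equals the numerator of $\phi^n(c_2)$, a point of the infinite critical orbit, up to a factor supported on the finitely many bad primes of $\phi$. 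Thus the infinitely many maximality conditions become the statement that certain rational numbers $r_n$, built from the numerator and denominator of $\phi^n(c_2)$ together with the harmless $2$-cycle data, are not squares; the finitely many small-$n$ conditions are verified by direct computation.

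\textbf{The arithmetic of the critical orbit (the crux).}
It remains to prove that each $r_n$ is a non-square, and here the particular features of $\phi$ are essential and the real difficulty resides. Writing $\phi^n(c_2)=a_n/b_n$ in lowest terms, $\phi$ yields the recursion
\[
a_{n+1}=\frac{3a_n^2+b_n^2}{g_n},\qquad b_{n+1}=\frac{b_n^2-4a_nb_n-a_n^2}{g_n},\qquad g_n=\gcd\bigl(3a_n^2+b_n^2,\;b_n^2-4a_nb_n-a_n^2\bigr).
\]
The heart of the proof will be to exhibit, for each $n$, a prime $p$ with $v_p(r_n)$ odd — which forces $r_n\notin(\Q^*)^2$ — by induction on $n$, propagating the parity of $p$-adic valuations through this recursion while keeping control of the cancellation $g_n$. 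This inductive step, where one must genuinely exploit \emph{this} $\phi$ — the factorization of $\Delta$, the $2$-cycle $1\leftrightarrow-1$ into which both $c_1$ and $\alpha=0$ fall, and the non-periodicity phenomenon that underlies the polynomial families of~\cite{quaddiv} — is the main obstacle; once it is in place, the remaining bookkeeping follows the pattern used for the family of Conjecture~\ref{mainconj}, and Corollary~\ref{maxcor2} gives $G_\infty=\Aut(T_0)$.
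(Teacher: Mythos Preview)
Your setup is correct and follows the paper's approach: reduce via Corollary~\ref{maxcor2} to showing that $p_n(1)\,p_n(-1/3)$ has odd valuation at some prime $\p$ satisfying the side conditions~\eqref{noram}. But your third section is not a proof---it is a description of what remains to be proved, and you yourself call it ``the main obstacle.'' The paper actually carries this step out, and the method is rather different from the inductive valuation-tracking through reduced fractions $a_n/b_n$ that you sketch.

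First, the paper works with the unprojected polynomials $p_n,q_n$ via the recursion~\eqref{recc}, so $p_n(-1/3)$ and $q_n(-1/3)$ are explicit rationals and there is no gcd $g_n$ to control. Second, it shows $p_n(1)$ is an \emph{even} power of $2$ for every $n$ (since $p_2(1)=q_2(1)=2^6$ and then~\eqref{oneco} applies), so $p_n(1)\,p_n(-1/3)$ and $p_n(-1/3)$ lie in the same square class. Third---and this is the idea your outline lacks---the pair $(p_n(-1/3),q_n(-1/3))$ is eventually periodic modulo $5$, with $\pm p_n(-1/3)$ always a nonsquare mod $5$; hence $p_n(-1/3)$ is never $\pm$ a rational square, and some prime divides it to odd order. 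Fourth, one must still check that this witnessing prime satisfies~\eqref{noram}: the paper computes $v_2(p_n(-1/3))$ and $v_3(p_n(-1/3))$ directly from~\eqref{recc} and shows both are even for $n\ge 2$, so the witness is coprime to $6=\ell(c)\cdot\Disc\, p$ up to sign; and the good-reduction argument preceding the proof shows any odd prime divides at most one $p_n(-1/3)$, which handles the condition $v_\p(p_j(\gamma_i))=0$ for $2\le j\le n-1$. Finally, you omit the hypothesis $\phi^n(\infty)\neq 0$ of Corollary~\ref{maxcor2}; the paper verifies it by reducing modulo $3$.
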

The function in Theorem \ref{nonpoly} is polynomial-like in that it has a periodic critical point, though here it is in a 2-cycle rather than being a fixed point. Moreover, the orbit of~$0$ under~$\phi$ is finite but not periodic. See the discussion following Corollary \ref{maxcor2} for a one-parameter family of such maps. 

If $f$ is a non-identity element of $A_{\phi, \alpha}$, then $f(\alpha) = \alpha$, and so $f(\phi^n(\alpha)) = \phi^n(\alpha)$ for all $n \geq 1$.  Because $f$ has exactly two fixed points, $\alpha$ is thus either in a cycle of $\phi$ of length at most two, or maps after one iteration onto a fixed point.  In fact, under the hypotheses of Conjuecture~\ref{genmain}, $\alpha$ is either fixed by $\phi$ or maps to a fixed point of $\phi$ (see Section~\ref{preliminaries}).     Figure~\ref{fig: trees} shows possible pre-image trees under the hypotheses of Conjecture~\ref{genmain}.
Here, 
\begin{equation}
V_1 = \phi^{-1}(\alpha) \setminus \{\alpha\}, \quad V_n = \phi^{-1}(V_{n-1}) \text{ for } n > 1,  \text{ and } \quad T_\alpha = \bigcup_{n \geq 1} V_n.\label{eq : trees}
\end{equation}
Note that $K\left(\phi^{-n}(\alpha)\right) = K(V_n)$.  

\begin{figure}[hbtp] 
\ifx\JPicScale\undefined\def\JPicScale{1}\fi
\unitlength \JPicScale mm
\begin{picture}(71,53.12)(0,0)
\linethickness{0.3mm}
\put(6.86,9.57){\line(0,1){8.57}}
\put(6.86,9.57){\vector(0,-1){0.12}}
\put(6.86,6.71){\makebox(0,0)[cc]{$\phi(\alpha)$}}

\linethickness{0.3mm}
\multiput(8.39,9.7)(0.14,0.2){2}{\line(0,1){0.2}}
\multiput(8.67,10.11)(0.11,0.13){3}{\line(0,1){0.13}}
\multiput(8.99,10.49)(0.12,0.12){3}{\line(0,1){0.12}}
\multiput(9.33,10.83)(0.12,0.1){3}{\line(1,0){0.12}}
\multiput(9.71,11.15)(0.2,0.14){2}{\line(1,0){0.2}}
\multiput(10.1,11.43)(0.21,0.12){2}{\line(1,0){0.21}}
\multiput(10.52,11.66)(0.22,0.1){2}{\line(1,0){0.22}}
\multiput(10.95,11.86)(0.45,0.15){1}{\line(1,0){0.45}}
\multiput(11.4,12.01)(0.46,0.11){1}{\line(1,0){0.46}}
\multiput(11.86,12.13)(0.46,0.07){1}{\line(1,0){0.46}}
\multiput(12.32,12.19)(0.47,0.02){1}{\line(1,0){0.47}}
\multiput(12.79,12.21)(0.46,-0.02){1}{\line(1,0){0.46}}
\multiput(13.25,12.19)(0.46,-0.07){1}{\line(1,0){0.46}}
\multiput(13.71,12.12)(0.45,-0.11){1}{\line(1,0){0.45}}
\multiput(14.16,12)(0.44,-0.16){1}{\line(1,0){0.44}}
\multiput(14.59,11.84)(0.21,-0.1){2}{\line(1,0){0.21}}
\multiput(15.01,11.64)(0.2,-0.12){2}{\line(1,0){0.2}}
\multiput(15.41,11.4)(0.19,-0.14){2}{\line(1,0){0.19}}
\multiput(15.78,11.12)(0.12,-0.11){3}{\line(1,0){0.12}}
\multiput(16.13,10.8)(0.11,-0.12){3}{\line(0,-1){0.12}}
\multiput(16.45,10.45)(0.14,-0.19){2}{\line(0,-1){0.19}}
\multiput(16.74,10.07)(0.13,-0.2){2}{\line(0,-1){0.2}}
\multiput(16.99,9.66)(0.11,-0.22){2}{\line(0,-1){0.22}}
\multiput(17.21,9.23)(0.18,-0.45){1}{\line(0,-1){0.45}}
\multiput(17.38,8.77)(0.14,-0.47){1}{\line(0,-1){0.47}}
\multiput(17.52,8.3)(0.1,-0.48){1}{\line(0,-1){0.48}}
\multiput(17.62,7.82)(0.06,-0.49){1}{\line(0,-1){0.49}}
\multiput(17.67,7.32)(0.01,-0.5){1}{\line(0,-1){0.5}}
\multiput(17.66,6.32)(0.03,0.5){1}{\line(0,1){0.5}}
\multiput(17.58,5.83)(0.07,0.5){1}{\line(0,1){0.5}}
\multiput(17.47,5.34)(0.11,0.49){1}{\line(0,1){0.49}}
\multiput(17.32,4.86)(0.15,0.48){1}{\line(0,1){0.48}}
\multiput(17.13,4.4)(0.1,0.23){2}{\line(0,1){0.23}}
\multiput(16.89,3.95)(0.12,0.22){2}{\line(0,1){0.22}}
\multiput(16.63,3.53)(0.13,0.21){2}{\line(0,1){0.21}}
\multiput(16.33,3.14)(0.15,0.2){2}{\line(0,1){0.2}}
\multiput(16,2.78)(0.11,0.12){3}{\line(0,1){0.12}}
\multiput(15.64,2.44)(0.12,0.11){3}{\line(1,0){0.12}}
\multiput(15.25,2.15)(0.19,0.15){2}{\line(1,0){0.19}}
\multiput(14.85,1.89)(0.2,0.13){2}{\line(1,0){0.2}}
\multiput(14.42,1.67)(0.21,0.11){2}{\line(1,0){0.21}}
\multiput(13.98,1.49)(0.44,0.18){1}{\line(1,0){0.44}}
\multiput(13.53,1.36)(0.45,0.13){1}{\line(1,0){0.45}}
\multiput(13.07,1.27)(0.46,0.09){1}{\line(1,0){0.46}}
\multiput(12.6,1.22)(0.46,0.04){1}{\line(1,0){0.46}}
\multiput(12.14,1.22)(0.46,-0){1}{\line(1,0){0.46}}
\multiput(11.68,1.27)(0.46,-0.05){1}{\line(1,0){0.46}}
\multiput(11.22,1.36)(0.45,-0.09){1}{\line(1,0){0.45}}
\multiput(10.78,1.49)(0.44,-0.14){1}{\line(1,0){0.44}}
\multiput(10.35,1.67)(0.43,-0.18){1}{\line(1,0){0.43}}
\multiput(9.94,1.89)(0.2,-0.11){2}{\line(1,0){0.2}}
\multiput(9.56,2.15)(0.19,-0.13){2}{\line(1,0){0.19}}
\multiput(9.19,2.45)(0.18,-0.15){2}{\line(1,0){0.18}}
\multiput(8.86,2.78)(0.11,-0.11){3}{\line(1,0){0.11}}
\multiput(8.55,3.15)(0.1,-0.12){3}{\line(0,-1){0.12}}
\multiput(8.28,3.55)(0.14,-0.2){2}{\line(0,-1){0.2}}
\multiput(8.05,3.97)(0.12,-0.21){2}{\line(0,-1){0.21}}
\multiput(7.85,4.41)(0.1,-0.22){2}{\line(0,-1){0.22}}
\put(8.39,9.7){\vector(-2,-3){0.12}}

\put(7,21){\makebox(0,0)[cc]{$\alpha$}}

\linethickness{0.3mm}
\put(6.86,23.57){\line(0,1){8.57}}
\put(6.86,23.57){\vector(0,-1){0.12}}
\put(7,35){\makebox(0,0)[cc]{$V_1$}}

\linethickness{0.3mm}
\put(6.86,38.57){\line(0,1){8.57}}
\put(6.86,38.57){\vector(0,-1){0.12}}
\put(7,50){\makebox(0,0)[cc]{$V_2$}}

\put(64,21){\makebox(0,0)[cc]{$\alpha$}}

\linethickness{0.3mm}
\multiput(64,23)(0.12,0.2){50}{\line(0,1){0.2}}
\put(64,23){\vector(-2,-3){0.12}}
\put(71,35){\makebox(0,0)[cc]{$V_1$}}

\linethickness{0.3mm}
\put(70.86,38.57){\line(0,1){8.57}}
\put(70.86,38.57){\vector(0,-1){0.12}}
\put(71,50){\makebox(0,0)[cc]{$V_2$}}

\linethickness{0.3mm}
\multiput(57,33)(0.12,-0.2){50}{\line(0,-1){0.2}}
\put(63,23){\vector(2,-3){0.12}}
\put(56,36){\makebox(0,0)[cc]{$\alpha$}}

\linethickness{0.3mm}
\multiput(56,38)(0.12,0.2){50}{\line(0,1){0.2}}
\put(56,38){\vector(-2,-3){0.12}}
\put(63,50){\makebox(0,0)[cc]{$V_1$}}

\linethickness{0.3mm}
\multiput(49,48)(0.12,-0.2){50}{\line(0,-1){0.2}}
\put(55,38){\vector(2,-3){0.12}}
\put(49,51){\makebox(0,0)[cc]{$\alpha$}}

\linethickness{0.3mm}
\multiput(66.66,19.27)(0.14,-0.16){2}{\line(0,-1){0.16}}
\multiput(66.94,18.95)(0.13,-0.18){2}{\line(0,-1){0.18}}
\multiput(67.2,18.6)(0.11,-0.19){2}{\line(0,-1){0.19}}
\multiput(67.43,18.22)(0.1,-0.21){2}{\line(0,-1){0.21}}
\multiput(67.63,17.8)(0.17,-0.44){1}{\line(0,-1){0.44}}
\multiput(67.8,17.36)(0.14,-0.46){1}{\line(0,-1){0.46}}
\multiput(67.93,16.91)(0.1,-0.48){1}{\line(0,-1){0.48}}
\multiput(68.03,16.43)(0.07,-0.49){1}{\line(0,-1){0.49}}
\multiput(68.1,15.94)(0.03,-0.5){1}{\line(0,-1){0.5}}
\multiput(68.12,14.95)(0.01,0.5){1}{\line(0,1){0.5}}
\multiput(68.08,14.45)(0.04,0.5){1}{\line(0,1){0.5}}
\multiput(68,13.96)(0.08,0.49){1}{\line(0,1){0.49}}
\multiput(67.89,13.48)(0.11,0.48){1}{\line(0,1){0.48}}
\multiput(67.74,13.02)(0.15,0.46){1}{\line(0,1){0.46}}
\multiput(67.56,12.58)(0.09,0.22){2}{\line(0,1){0.22}}
\multiput(67.35,12.16)(0.11,0.21){2}{\line(0,1){0.21}}
\multiput(67.11,11.77)(0.12,0.19){2}{\line(0,1){0.19}}
\multiput(66.84,11.41)(0.13,0.18){2}{\line(0,1){0.18}}
\multiput(66.55,11.09)(0.15,0.16){2}{\line(0,1){0.16}}
\multiput(66.24,10.8)(0.16,0.14){2}{\line(1,0){0.16}}
\multiput(65.9,10.56)(0.17,0.12){2}{\line(1,0){0.17}}
\multiput(65.56,10.36)(0.17,0.1){2}{\line(1,0){0.17}}
\multiput(65.2,10.21)(0.36,0.15){1}{\line(1,0){0.36}}
\multiput(64.84,10.11)(0.37,0.11){1}{\line(1,0){0.37}}
\multiput(64.46,10.05)(0.37,0.06){1}{\line(1,0){0.37}}
\multiput(64.09,10.04)(0.37,0.01){1}{\line(1,0){0.37}}
\multiput(63.72,10.08)(0.37,-0.04){1}{\line(1,0){0.37}}
\multiput(63.36,10.17)(0.36,-0.09){1}{\line(1,0){0.36}}
\multiput(63,10.3)(0.36,-0.13){1}{\line(1,0){0.36}}
\multiput(62.66,10.48)(0.17,-0.09){2}{\line(1,0){0.17}}
\multiput(62.33,10.71)(0.16,-0.11){2}{\line(1,0){0.16}}
\multiput(62.02,10.97)(0.15,-0.13){2}{\line(1,0){0.15}}
\multiput(61.74,11.28)(0.14,-0.15){2}{\line(0,-1){0.15}}
\multiput(61.48,11.63)(0.13,-0.17){2}{\line(0,-1){0.17}}
\multiput(61.24,12)(0.12,-0.19){2}{\line(0,-1){0.19}}
\multiput(61.03,12.41)(0.1,-0.2){2}{\line(0,-1){0.2}}
\multiput(60.86,12.85)(0.17,-0.43){1}{\line(0,-1){0.43}}
\multiput(60.72,13.3)(0.14,-0.46){1}{\line(0,-1){0.46}}
\multiput(60.61,13.77)(0.11,-0.47){1}{\line(0,-1){0.47}}
\multiput(60.54,14.26)(0.07,-0.49){1}{\line(0,-1){0.49}}
\multiput(60.5,14.76)(0.04,-0.49){1}{\line(0,-1){0.49}}
\put(60.5,14.76){\line(0,1){0.5}}
\multiput(60.5,15.25)(0.04,0.5){1}{\line(0,1){0.5}}
\multiput(60.54,15.75)(0.07,0.49){1}{\line(0,1){0.49}}
\multiput(60.61,16.24)(0.11,0.48){1}{\line(0,1){0.48}}
\multiput(60.72,16.72)(0.14,0.47){1}{\line(0,1){0.47}}
\multiput(60.86,17.19)(0.17,0.45){1}{\line(0,1){0.45}}
\multiput(61.03,17.64)(0.1,0.21){2}{\line(0,1){0.21}}
\multiput(61.24,18.06)(0.12,0.2){2}{\line(0,1){0.2}}
\put(66.66,19.27){\vector(-1,1){0.12}}

\end{picture}
\vspace{-0.1 in}
\caption{The first few levels of typical preimage trees when $\#A_{\phi, \alpha} > 1$.} \label{fig: trees}
\end{figure}
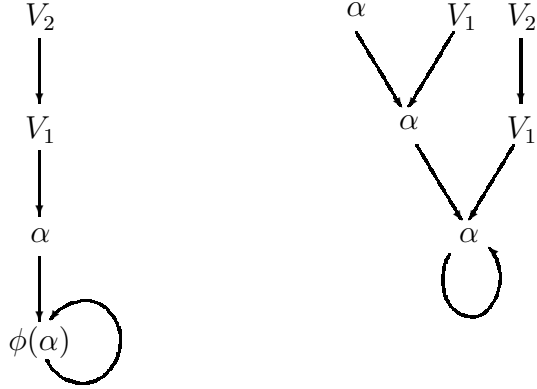

In Section \ref{galgen}, we give a criterion for a given pair $(\phi, \alpha)$ to satisfy $[C_\infty : G_\infty] < \infty$ in the case $d = 2$.  In Section \ref{preliminaries} we show that in the case $\#A_{\phi, \alpha} > 1$, Conjecture~\ref{genmain} is equivalent to:  

\begin{conjecture} \label{mainconj}
Let $\phi(x) = k(x^2 + 1)/x$ and $\alpha = 0$, and suppose that $\phi$ is not post-critically finite.  Then 
$[C_\infty : G_\infty] < \infty$.  
\end{conjecture}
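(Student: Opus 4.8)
Since $\phi(x)=k(x^{2}+1)/x=kx+k/x$ is an odd function it commutes with $f(x)=-x$, and $f(0)=0$; thus $f\in A_{\phi,0}$, we are in the regime $\#A_{\phi,\alpha}>1$, and (by Section~\ref{preliminaries}) $C_\infty$ is the centralizer in $\Aut(T_0)$ of the involution $f$ induces on $T_0$. The critical points of $\phi$ are $\pm1$, with critical values $\pm 2k$, and since $\phi(-x)=-\phi(x)$ the forward orbit of $-1$ is the negative of that of $1$; hence, as the introduction promises, there is in effect a single critical orbit $\{\phi^{m}(2k):m\ge 0\}$. My plan is to apply the modified maximality criterion of Section~\ref{galgen} (Theorem~\ref{specmaxcor1}, through Corollary~\ref{maxcor2}): it reduces $[C_\infty:G_\infty]<\infty$ to showing that a sequence of ``discriminant'' elements $\delta_m\in K^\times$, one per level of $T_0$, satisfies $\delta_m\notin(K(V_{m-1})^\times)^{2}$ for all but finitely many $m$; and by the discriminant formula of Theorem~\ref{discthm}, specialized here, $\delta_m$ equals --- up to squares and up to a factor supported on a fixed finite set $S_0$ of primes of $K$ (those dividing $2k$, together with the primes of bad reduction of $\phi$) --- a bounded product of consecutive iterates $\phi^{m}(2k)\phi^{m-1}(2k)\cdots$.

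The first real step is a primitivity observation that simultaneously controls ramification. Writing $\phi^{m-1}(2k)=a/b$ in lowest terms, the recursion gives $\phi^{m}(2k)=k(a^{2}+b^{2})/(ab)$ with $\gcd(a^{2}+b^{2},ab)=1$; so, away from $S_0$, a prime $\p$ divides the numerator of $\phi^{m}(2k)$ exactly when the orbit of $2k$ under $\phi\bmod\p$ sits at the point $0$ at time $m$. But $\phi(0)=\infty$ and $\phi(\infty)=\infty$ (as $k\ne 0$), so once that orbit reaches $0$ it is absorbed at the fixed point $\infty$ and never again visits the finite point $0$; hence no prime outside $S_0$ can divide two distinct orbit terms. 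Combined with Theorem~\ref{discthm} --- which makes $K(V_{m-1})/K$ unramified outside $S_0$ and the primes dividing $\phi^{1}(2k)\cdots\phi^{m-1}(2k)$ --- this shows that any prime $\p\notin S_0$ dividing some iterate $\phi^{j}(2k)$, $j\le m$, to odd valuation is automatically primitive, hence unramified in $K(V_{m-1})$, hence forces $v_{\P}(\delta_m)$ to be odd for a place $\P\mid\p$; so $\delta_m$ is not a square there. It therefore suffices to prove that for all but finitely many $m$ the iterate $\phi^{m}(2k)$ has a prime factor outside $S_0$ occurring to odd multiplicity --- equivalently, writing $U_{S_0}$ for the $S_0$-unit group, that the class of $\phi^{m}(2k)$ in $K^\times/(K^\times)^{2}$ is not in the (finite) image of $U_{S_0}$.

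Suppose this failed for infinitely many $m$. By the pigeonhole principle some fixed $d\in K^\times$ would satisfy $\phi^{m}(2k)=d\,y_m^{2}$, $y_m\in K$, for infinitely many $m$; and since $\phi$ is not post-critically finite the orbit of $2k$ is infinite, so $\widehat h_\phi(2k)>0$ and the naive heights $h(y_m)\asymp\widehat h_\phi(2k)\cdot 2^{\,m-1}$ tend to infinity. Because every iterate $\phi^{m-1}(2k),\phi^{m-2}(2k),\dots$ also lies in $K$, solving the quadratic recursion $k(t^{2}+1)=(d\,y_m^{2})\,t$ backwards a bounded number of steps forces the points $(y_m,\dots)$ onto a single curve $C$ defined over $K$ (obtained by iteratively adjoining the square roots that must exist because these iterates are $K$-rational). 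For the $k$ under consideration one checks that $C$ has genus at least $2$; Faltings's theorem then makes $C(K)$ finite, contradicting $h(y_m)\to\infty$. Hence $\delta_m\notin(K(V_{m-1})^\times)^{2}$ for all large $m$, so $[C_\infty:G_\infty]<\infty$.

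The step I expect to be the main obstacle is precisely this genus computation: for special values of $k$ the polynomials cutting out $C$ acquire repeated roots and the genus drops to $1$ (where Faltings no longer applies, and since the $y_m$ are not integral Siegel's theorem is unavailable too) or to $0$, and that is why Conjecture~\ref{mainconj} is open in general. In the ``many cases'' we are able to treat one sidesteps the curve $C$ altogether: for the relevant $k$ one exhibits a single prime $\p\notin S_0$ --- for instance a prime dividing $k^{2}+1$, or the prime $2$ under a suitable congruence on $k$ --- whose valuation along the critical orbit can be computed recursively from $\phi^{m}(2k)=k(a^{2}+b^{2})/(ab)$ and shown to be odd at all the requisite levels, which forces $\delta_m$ to be a non-square in $K(V_{m-1})$ for all but finitely many $m$, and indeed frequently yields the stronger conclusion $G_\infty=C_\infty$.
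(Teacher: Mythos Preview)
Your overall strategy is the paper's: the primitivity step is Lemma~\ref{relprime}, the $S$-unit pigeonhole followed by Faltings is exactly the engine of Theorem~\ref{finiteindex}, and the reduction via Theorem~\ref{specmaxcor1} is the intended one. But the statement is a conjecture, not a theorem, and you have not proved it --- you have reproduced the paper's conditional result while misdiagnosing what blocks the general case.

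The genuine gap is irreducibility. Theorem~\ref{specmaxcor1} carries the explicit hypothesis that $p_{n-1}$ is irreducible over $K$; your reduction ``$[C_\infty:G_\infty]<\infty$ follows once $\delta_m\notin (K_{m-1}^\times)^2$ for almost all $m$'' silently assumes this for every $n$ and never verifies it. The paper secures irreducibility separately via Theorem~\ref{irredthm}, which needs none of $-1,\delta_n,-\delta_n$ to be a square in $K$ for \emph{all} $n\ge 2$, and that is precisely the extra hypothesis imposed in Theorems~\ref{finiteindex1} and~\ref{finiteindex}. The Remark following Theorem~\ref{finiteindex} says this outright: the Faltings step already shows that only finitely many of the $\pm\delta_n$ can be squares, but a single exceptional $n$ can make $p_n$ reducible, after which the Stoll-type transitivity argument underlying Theorem~\ref{specmaxcor1} no longer applies. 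That, not the genus of $C$, is the obstruction to Conjecture~\ref{mainconj}.

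On the curve itself, your worry is misplaced and your construction vaguer than necessary. The paper takes $C:y^2=ck^{-1}p_3^*(x)$, so that $c\delta_n\in K^{*2}$ produces a rational point with $x=(\phi^*)^{n-3}(1)$. By Corollary~\ref{disccor} the discriminant of $p_3^*$ is supported on $k$ and the $p_i^*(1)$ for $i\le 3$, and these are all nonzero whenever $\phi$ is not post-critically finite (since $0\mapsto\infty\mapsto\infty$, the orbit of $1$ never passes through $0$ or $\infty$). Hence $p_3^*$ has eight distinct roots and $C$ has genus $3$ for \emph{every} admissible $k$; the genus never drops. Your closing paragraph, which blames degenerations of $C$, should instead point at the irreducibility of the $p_n$.
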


The conjectures above suggest a more general question.
\begin{question} \label{questmain}
Let $\phi \in K(z)$ have degree $d \geq 2$ and $\alpha \in K$.  Under what conditions is $[C_{\infty} : G_{\infty}]$  finite?
\end{question}

The post-critically finite maps of the form $\phi(x) = k(x^2 + 1)/x$ must have $k$ with height at most $2$, and $k$ can only be divisible by primes of $K$ lying over $(2)$ (see Proposition \ref{pcf} for details).  
The structure of $C_\infty$ in the setting of Conjecture~\ref{mainconj} is described at the beginning of Section \ref{galaut}.  In particular, $C_\infty$ is an infinite-index subgroup of $\Aut(T_{\alpha})$ with Hausdorff dimension 1/2 (see p.~\pageref{hausdorff} for the definition);  however, in contrast to Cartan subgroups of $\GL_2(\Z_\ell)$, it is highly non-abelian.  Indeed, $C_\infty$ has an index-two subgroup isomorphic to $\Aut(T_\alpha)$.

One of our main results is the following.  
\begin{theorem} \label{odddeg}
If $K$ is a number field of odd degree over $\Q$, then Conjecture $\ref{mainconj}$ is true for all $k$ in a congruence class, and thus for a positive density of $k$.   
\end{theorem}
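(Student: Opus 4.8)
The plan is to feed the arithmetic of the (essentially unique) critical orbit of $\phi(x)=k(x^2+1)/x$ into the finite-index criterion of Section~\ref{galgen}, in the form adapted to the case $\#A_{\phi,\alpha}>1$. Passing from level $n-1$ to level $n$ of $T_\alpha$ amounts, for each $\beta\in V_{n-1}$, to adjoining a square root of $\beta^{2}-4k^{2}$; since $V_{n-1}$ is stable under $x\mapsto -x$ and the pair $\{\beta,-\beta\}$ yields the same radicand, the relative extension $K(V_n)/K(V_{n-1})$ is generated by the $2^{\,n-2}$ elements $\sqrt{\beta^{2}-4k^{2}}$, $[\beta]\in V_{n-1}/\{\pm1\}$, whose product equals, modulo squares in $K(V_{n-1})^{*}$, a single element $\delta_n\in K^{*}$. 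Using Theorem~\ref{discthm} and the fact that $2k=\phi(1)$ and $-2k=\phi(-1)$ are the two critical values, one computes that $\delta_n$ is congruent, modulo $(K(V_{n-1})^{*})^{2}$, to $2k\,\phi^{n}(1)$ --- the point being that the radicand adjoined at any level $j<n$ is a square in $K(V_{n-1})$, so the contributions of the earlier critical-orbit terms collapse. Thus the criterion reduces Conjecture~\ref{mainconj} for this $\phi$ to showing that $2k\,\phi^{n}(1)$ is not a square in $K(V_{n-1})$ for all but finitely many $n$.

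The next step is to turn this into a finite, local statement, and here the hypothesis that $[K:\Q]$ is odd is decisive. Since $\mathrm{Gal}(K(V_{n-1})/K)$ is a $2$-group, every ramification index in $K(V_{n-1})/K$ is a power of $2$; and if $\phi$ is not post-critically finite then, by Theorem~\ref{discthm} and the remark following it, $K(V_{n-1})/K$ is ramified above only the finitely many primes dividing $2$ and dividing the earlier orbit terms $\phi^{j}(1)$ with $j<n$. On the other hand, because $[K:\Q]$ is odd, every rational prime $q$ has at least one prime $\mathfrak q$ of $K$ above it with $e(\mathfrak q\mid q)$ odd. Combining these: if $q\nmid 2k\prod_{j<n}\phi^{j}(1)$ and $v_{q}\!\left(2k\,\phi^{n}(1)\right)$ is odd, then $\mathfrak q$ is unramified in $K(V_{n-1})/K$, so for $\mathfrak Q\mid\mathfrak q$ in $K(V_{n-1})$ one has $v_{\mathfrak Q}\!\left(2k\,\phi^{n}(1)\right)=e(\mathfrak q\mid q)\,v_{q}\!\left(2k\,\phi^{n}(1)\right)$ odd, whence $2k\,\phi^{n}(1)$ is a non-square in $K(V_{n-1})$. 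It therefore suffices to produce, for every large $n$, a \emph{primitive prime divisor of odd multiplicity} of the numerator of $\phi^{n}(1)$: a rational prime $q$, dividing neither $2k$ nor any earlier orbit term, with $v_{q}$ of the numerator of $\phi^{n}(1)$ odd.

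Producing such primes is the crux, and this is where the congruence condition on $k$ enters. Writing $\phi^{n}(1)=M_n/D_n$ in lowest terms, the recursion $c\mapsto k(c^{2}+1)/c$ gives $M_n=k^{\varepsilon_n}\!\left(M_{n-1}^{2}+D_{n-1}^{2}\right)$ with $\gcd\!\left(M_{n-1}^{2}+D_{n-1}^{2},\,M_{n-1}D_{n-1}\right)=1$; moreover $D_{n-1}$ is supported, apart from $k$ and $2$, on the primes of $M_2\cdots M_{n-2}$, so the factor $M_{n-1}^{2}+D_{n-1}^{2}$ is coprime to $2$ and to every earlier $M_j,D_j$, i.e.\ it is automatically a primitive divisor at level $n$. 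It remains only to force $M_{n-1}^{2}+D_{n-1}^{2}$ to carry, for every $n$, an odd-multiplicity prime avoiding the (bounded in number) prime factors of $k$ --- equivalently, to keep $M_{n-1}^{2}+D_{n-1}^{2}$ from being a perfect square times a factor built from the primes of $k$. Choosing $k$ in a suitable congruence class does this: when $K=\Q$, taking $k$ odd already gives $M_{n-1}$ odd and $v_{2}(D_{n-1})=1$ for all $n\ge 3$, so $M_{n-1}^{2}+D_{n-1}^{2}\equiv 5\pmod 8$, which is never a square, and a short additional argument (a sum of two coprime squares has no prime factor $\equiv 3\pmod 4$, together with a size estimate: $M_{n-1}^{2}+D_{n-1}^{2}$ grows doubly exponentially) rules out the remaining degenerate case. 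For a general number field one argues at a prime $\mathfrak p\mid 2$ of $K$ rather than at $2$, imposing finitely many congruences on $k$ modulo a power of $\mathfrak p$ (and discarding the finitely many residues giving a post-critically finite $\phi$, cf.\ Proposition~\ref{pcf}). Once this is arranged, the previous paragraph produces the required non-square at every large level, so the criterion of Section~\ref{galgen} gives $[C_\infty:G_\infty]<\infty$; since the hypothesis on $k$ is membership in a congruence class, the conclusion holds for a positive density of $k$.

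I expect the main obstacle to be exactly this last step: ensuring, uniformly in $n$ and for a single congruence class of $k$, that $M_{n-1}^{2}+D_{n-1}^{2}$ always possesses an odd-multiplicity prime divisor coprime to $2k$. Over $\Q$ the clean congruence $M_{n-1}^{2}+D_{n-1}^{2}\equiv 5\pmod 8$, combined with the structure of sums of two coprime squares and a size estimate, suffices; but over a ramified $2$-adic field "being a square" is governed by an Artin--Schreier-type hyperplane in the residue field, so one must simultaneously control the residues of $M_{n-1}$ and $D_{n-1}$ modulo $\mathfrak p$ for all $n$ and check that one congruence class for $k$ does the job --- a careful but bounded computation. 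The reductions in the first two paragraphs, by contrast, are formal once the finite-index criterion of Section~\ref{galgen} and the discriminant formula of Theorem~\ref{discthm} are in hand.
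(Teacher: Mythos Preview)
Your outline correctly identifies the reduction to controlling the critical-orbit quantity $\delta_n$ (what you call $2k\,\phi^n(1)$, which agrees with the paper's $\delta_n = kp_n^*(1)$ up to squares in $K_{n-1}$), but from that point on your route and the paper's diverge substantially, and your route has real gaps.

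The paper's proof is much shorter than what you attempt. It uses odd degree in exactly two ways: first, $-1\notin K^{*2}$ since $\Q(i)\not\subset K$; second, since $\sum_{\p\mid 5} e_\p f_\p=[K:\Q]$ is odd, some prime $\p\mid 5$ has $f_\p=[\calO_K/\p:\Ftwo_5]$ odd, so non-squares in $\mathbb{F}_5$ remain non-squares in $\calO_K/\p$. For $k\equiv\pm 2\pmod{\p}$ one then computes that the sequence $(S_n(1,1),T_n(1,1))\pmod{\p}$ is eventually periodic with $S_n(1,1)\in\{2,3\}\pmod{\p}$, so neither $\delta_n$ nor $-\delta_n$ is ever a square in $K$ (Lemma~\ref{five}). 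That is the entire congruence argument; Theorem~\ref{finiteindex} then does the heavy lifting, using Faltings' theorem and an $S$-unit decomposition to convert ``$\pm\delta_n$ never a square in $K$'' into ``primitive prime divisor of odd multiplicity, coprime to $2k$, for all but finitely many $n$''. You never need to exhibit such a prime yourself.

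Your plan instead tries to produce that primitive prime divisor directly at every level, bypassing Faltings. This is the harder road, and several steps are not justified. Your use of odd degree --- ``every rational prime $q$ has a prime of $K$ above it with odd $e(\mathfrak q\mid q)$'' --- presupposes that the orbit quantities $M_n,D_n$ and the odd-multiplicity prime live in $\Q$, but for $k\in K$ they live only in $K$, so there is no rational $q$ to lift; this is not how the odd-degree hypothesis enters. Over $\Q$ your mod-$8$ computation is plausible, but the passage from ``$M_{n-1}^2+D_{n-1}^2\equiv 5\pmod 8$'' to ``there is an odd-multiplicity prime \emph{coprime to $k$}'' is exactly where the paper invokes Faltings, and your ``short additional argument \ldots\ size estimate'' does not close that gap. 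You also never verify the irreducibility of $p_{n-1}$ needed to invoke the level-by-level criterion; in the paper this comes from Theorem~\ref{irredthm}, which again needs $\pm\delta_n\notin K^{*2}$ as input. If you want to salvage your approach for general $K$, the cleanest fix is to drop the rational-prime lifting entirely, work at a prime of $K$ above $5$ (not $2$) with odd residue degree as the paper does, and then appeal to Theorem~\ref{finiteindex} rather than trying to manufacture primitive divisors by hand.
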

 
 We prove that Conjecture \ref{mainconj} is true in many other circumstances (see Corollary \ref{irredfam}).  For simplicity, we state here the result for the case $K = \Q$. 

\begin{theorem} \label{mainthm}
Conjecture~$\ref{mainconj}$ is true for $K = \Q$ provided $k$ satisfies one of the following conditions \textup(we write $v_p$ for the $p$-adic valuation\textup):
\begin{itemize}
\item $v_2(k) = 0$ or $v_3(k) = 0$,
\item $k \equiv 2,3 \bmod{5}$ or $k \equiv 1,2,5,6 \bmod{7}$,
\item $v_p(2k \pm 1) > 0$ for some $p \equiv 3, 5 \bmod{8}$,
\item $v_p(2k^2 -k + 1) > 0$ for some prime $p$ with $-k$ not a square mod $p$, or
\item $v_p(2k^2 + k + 1) > 0$ for some prime $p$ with $k$ not a square mod $p$.
\end{itemize}
\end{theorem}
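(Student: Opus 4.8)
The plan is to deduce Theorem~\ref{mainthm} from Corollary~\ref{irredfam}, our general finite-index criterion for Conjecture~\ref{mainconj} over an arbitrary number field, by verifying its hypotheses over $\Q$ for each of the five families of $k$. First recall the shape of that criterion. Since $\phi(x) = k(x^2+1)/x$ is an odd function it commutes with $f(x) = -x \in A_{\phi,0}$, so $C_\infty$ is a proper subgroup of $\Aut(T_\alpha)$; and because the two critical points $\pm 1$ are interchanged by $f$, the only critical orbit that matters is $\calO = \{\phi(1), \phi^2(1), \phi^3(1), \dots\} = \{2k,\ (4k^2+1)/2,\dots\}$, which is infinite exactly because $\phi$ is not post-critically finite (Proposition~\ref{pcf}). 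Writing $\Q = \Q(V_0) \subseteq \Q(V_1) \subseteq \Q(V_2) \subseteq \cdots$, one has $[C_\infty : G_\infty] < \infty$ as soon as $[\Q(V_n) : \Q(V_{n-1})]$ is as large as $C_\infty$ permits for all $n$ past some $n_0$, and the special maximality criterion (Theorem~\ref{specmaxcor1}, the version of the criterion adapted to $\#A_{\phi,\alpha} > 1$ in place of the generic Corollary~\ref{maxcor2}) reduces this to an explicit non-squareness statement at level $n$: a specific element $\delta_n$ --- assembled from finitely many terms of $\calO$ together with resultants of the numerators of consecutive iterates of $\phi$, which is how the polynomials $2k \pm 1$, $2k^2 \pm k + 1$, $4k^2+1, \dots$ appear --- must fail to be a square in $\Q(V_{n-1})$.

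The mechanism that converts this into the arithmetic conditions of the theorem is the discriminant formula of Theorem~\ref{discthm}: the extension $\Q(V_{n-1})/\Q$ ramifies only above the primes dividing the finitely many terms of $\calO$ below level $n-1$, together with $2$ and the primes dividing $k$. Hence if a prime $p$ divides $\delta_n$ to odd order but lies outside this finite ``bad set'', then $p$ is unramified in $\Q(V_{n-1})/\Q$, forcing $\delta_n$ to be a non-square there and the level-$n$ step to be maximal; and a Zsigmondy-type primitive-prime-divisor argument applied to the sequence $(\delta_n)_n$, using that $\calO$ is infinite with controlled valuations, supplies such a prime at every level $n \ge n_0$ provided one suitable prime is available at the outset and it is of the correct splitting type. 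This last requirement is exactly where the residue conditions enter: for odd $p$, whether a term with $v_p$ odd genuinely produces a non-square in $\Q(V_{n-1})$ --- rather than only in a subfield matched to the $f$-symmetry --- depends on the quadratic-residue class of $2$, of $k$, or of $-k$ modulo $p$, which is precisely the content of ``$p \equiv 3,5 \bmod 8$'' (that is, $2$ a non-residue), ``$k$ not a square mod $p$'', and ``$-k$ not a square mod $p$''.

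Granting the machine, the proof is a finite case check. When $v_2(k) = 0$ the number $k$ is odd, $v_2\bigl((4k^2+1)/2\bigr) = -1$ is odd, and one verifies that $2$ remains primitive at the relevant level. When $v_3(k) = 0$ one has $4k^2+1 \equiv 2 \bmod 3$, so $3$ does not appear in the first post-critical numerator but does divide one of $2k \pm 1$, and since $3 \equiv 3 \bmod 8$ this bullet is in fact subsumed by the third. The bullets $k \equiv 2,3 \bmod 5$ and $k \equiv 1,2,5,6 \bmod 7$ are precisely the residue classes for which $5$, respectively $7$, divides an initial term of $\calO$ to exactly first order with the correct behavior; the bullet $v_p(2k \pm 1) > 0$ with $p \equiv 3,5 \bmod 8$ uses $2k \pm 1 = \phi(1) \pm 1$ directly together with the non-residue condition on $2$; and the last two bullets are the same argument with the resultant quantities $2k^2 \mp k + 1$ in place of a post-critical numerator and with the non-residue hypothesis on $\mp k$ doing the work. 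In each case one must check that the chosen prime avoids the bad set and stays primitive from some level on, after which Corollary~\ref{irredfam} applies.

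The main obstacle, and the reason this is the first such theorem for a map not conjugate to a polynomial, is controlling the ramification coming from the fact that $\phi$ is a genuine rational function: the discriminant of the numerator of $\phi^n$ picks up extra factors --- from the denominators of the iterates, and from $\alpha = 0$ mapping to the fixed point $\infty$ rather than being fixed --- that are absent in the polynomial families of \cite{quaddiv}. One must show these extra factors neither cancel nor absorb the primitive primes produced above, which comes down to a careful accounting of the $2$-adic valuations along $\calO$, where the involution $f(x) = -x$ interacts with wild ramification above $2$. This $2$-adic bookkeeping is what forces the explicit appearance of the prime $2$, and of congruences modulo $8$, in the statement; once it is in place, every bullet reduces to a finite residue computation.
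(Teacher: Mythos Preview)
Your proposal misidentifies what Corollary~\ref{irredfam} actually requires you to check, and as a result the mechanism you describe for each bullet is not the one that works.

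Corollary~\ref{irredfam} already packages the ramification/primitive-prime-divisor machinery you describe (Theorem~\ref{specmaxcor1}, Lemma~\ref{relprime}, and the Faltings step) inside Theorem~\ref{finiteindex}. That part runs unconditionally once its hypotheses are met; it does not need a ``seed prime of the correct splitting type.'' The hypotheses that remain to be verified over $\Q$ are simply that none of $-1$, $\delta_n$, $-\delta_n$ is a square \emph{in $\Q$} for every $n\ge 2$. Since $\Q$ is real and $\delta_n>0$, only ``$\delta_n$ is not a square in $\Q$'' is at issue. That is what the five bullets are doing, and they do it by elementary congruence arguments, not by producing primes dividing $\delta_n$ to odd order.

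Concretely: for $v_2(k)=0$ the paper shows (Theorem~\ref{nosquare}) via a Pythagorean-triple valuation argument that $\delta_n=P_{n-1}(1,1)^2+Q_{n-1}(1,1)^2$ can never be a rational square; your claim that ``$v_2((4k^2+1)/2)=-1$ and $2$ remains primitive'' is not that argument. For $v_3(k)=0$ and for $k\bmod 5,7$, the paper computes the eventually periodic sequence $(S_n(1,1),T_n(1,1))\bmod p$ and checks $S_n(1,1)$ is a nonresidue throughout the cycle (Lemma~\ref{five}, Theorems~\ref{three} and~\ref{fiveseven}); these primes need not divide any $\delta_n$ at all, contrary to your description. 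For the last three bullets, the point (Theorem~\ref{fixedpoint}) is that $2k\pm1$ divides $\delta_n-\delta_1$ and $2k^2\pm k+1$ divides $\delta_n-\delta_2$ for all $n$, so $\delta_n\equiv\delta_1=2k^2$ (resp.\ $\delta_2=k^2(4k^2+1)$) modulo $p$; the residue conditions on $2$, $-k$, $k$ are exactly what make that constant value a nonresidue. These polynomials are not ``resultants of numerators of consecutive iterates,'' and the mod~$8$ condition has nothing to do with wild ramification---it is just the criterion for $2$ to be a nonresidue mod $p$.

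In short: the primitive-prime/ramification story you tell is the proof of Theorem~\ref{finiteindex}, not of Theorem~\ref{mainthm}. What is missing from your proposal is the actual step the bullets supply, namely the verification that $\delta_n\notin\Q^{*2}$ for all $n$ by reducing modulo a fixed small prime and using periodicity (or eventual constancy) of the orbit there.
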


In the case where $k \in \Z$, we use Theorem \ref{mainthm} plus other results to verify Conjecture~\ref{mainconj} for all $k$ with $|k| \leq 10000$ (see the remark following Corollary \ref{irredfam}).  
We also give the following sufficient conditions on $k$ to ensure that the index in Conjecture~\ref{mainconj} is one (see Theorem~\ref{maxthm}).  
\begin{theorem} \label{mainmax}
Let $\phi$ and $\alpha$ be as in Conjecture~$\ref{mainconj}$, and suppose that $K = \Q$.  There exists an effectively computable set $\Sigma$ of primes of $\Z$ of natural density zero, such that if $v_p(k) = 0$ for all primes belonging to $\Sigma$ then $G_\infty \cong C_\infty$. 
\end{theorem}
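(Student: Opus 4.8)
The plan is to feed the hypothesis on $k$ into the maximality criterion of Theorem~\ref{maxthm} (the version adapted to the single-critical-orbit setting of Conjecture~\ref{mainconj}) and thereby reduce $G_\infty \cong C_\infty$ to an assertion about the forward orbit of the critical point $1$ under $\phi$. That criterion asserts that $G_\infty \cong C_\infty$ precisely when, for every $n \ge 1$, a certain element $\delta_n \in \Q^*/(\Q^*)^2$ attached to level $n$ --- built from $\phi^n(1)$ together with the discriminant data of Theorem~\ref{discthm} --- fails to lie in the subgroup of $\Q^*/(\Q^*)^2$ generated by $\delta_1,\dots,\delta_{n-1}$ and a fixed finite set $S_0$ of square classes (the classes of $-1$, of $2$, and of a bounded list of small primes, which absorbs the mild sign and $2$-power twisting produced by Theorem~\ref{discthm} and by the symmetry $\phi(-x) = -\phi(x)$). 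First I would make $\delta_n$ explicit. Writing $\phi^n(1) = x_n/y_n$ in lowest terms, the identity $\phi(x) = k(x^2+1)/x$ gives the recursion $x_{n+1} = k(x_n^2 + y_n^2)/g_n$, $y_{n+1} = x_n y_n/g_n$ with $g_n = \gcd\!\big(k(x_n^2 + y_n^2),\, x_n y_n\big)$; checking that $\gcd(x_n,y_n)=1$ and bounding $g_n$ in terms of the primes dividing $k$ shows that $\delta_n$ is, up to $S_0$ and powers of primes dividing $k$, the squarefree part of $x_n$.

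The core step is a primitive-prime-divisor (Zsygmondy-type) statement for this dynamical sequence, uniform in $k$: for all but boundedly many $n$, and for all $k$ not divisible by any prime of an explicit density-zero set $\Sigma$, the integer $x_n$ has a prime factor $p_n$ occurring to odd multiplicity with $p_n \notin S_0$, $p_n \nmid k$, and $p_n \nmid x_1\cdots x_{n-1}$. By unique factorization this forces $\delta_n \notin \langle \delta_1,\dots,\delta_{n-1},S_0\rangle$, which is exactly the independence the criterion demands. To produce $p_n$ I would argue along the lines of the polynomial case of \cite{quaddiv}: a prime $p \nmid k$ dividing $x_m$ and $x_{m'}$ with $m<m'\le n$ forces the orbit of $[1:1]$ under the reduction of $\phi$ on $\mathbb{P}^1(\Fp)$ to return to the line $\{X=0\}$ within $n$ steps, so the first recurrence of such a prime occurs on a scale $O(p)$; since the recursion gives $\log x_n$ growing like $c\cdot 2^n$ with an explicit $c>0$ once $p\nmid k$, the non-primitive part of $x_n$ is eventually too small to account for all of $x_n$, and a primitive $p_n$ must exist --- the odd-multiplicity refinement following because $p_n^2 \mid x_{n}$ would impose an extra congruence on $k$, which we also place in $\Sigma$. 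The finitely many genuinely small $n$ are dealt with separately, partly by invoking the congruence conditions already proved in Theorem~\ref{mainthm} and partly by direct computation, throwing the interfering primes into $\Sigma$.

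Finally I would assemble $\Sigma$ and bound its density. It consists of: the primes in $S_0$; the primes that could interfere in the bounded range of small levels; and the primes governing the primitivity-and-odd-multiplicity conditions above, which are exactly the primes dividing an explicit sequence of nonzero resultants and discriminants $M_1, M_2, \dots$ associated to the iterates of $\phi$, a sequence with $\log M_n$ growing at least geometrically. Since each $M_n$ has only $O(\log M_n/\log\log M_n)$ prime factors, $\sum_{p\in\Sigma}1/p<\infty$, so $\Sigma$ has natural density zero (with counting function $o(x/\log x)$); and $\Sigma$ is effectively computable because every ingredient is. I expect the main obstacle to be the \emph{uniformity in $k$} of the primitive-prime-divisor step: the naive Zsygmondy argument produces an exceptional set of $n$ --- and of interfering primes --- that depends on $k$, and the real work is to show that the recurrence scale $O(p)$ and the lower bound for $\log x_n$ can be taken independent of $k$ (using only $p\nmid k$), so that one density-zero set $\Sigma$, chosen once and for all, works for every admissible $k$; dovetailing this uniform estimate with the small-$n$ analysis and with the exact shape of the criterion in Theorem~\ref{maxthm} is the delicate part.
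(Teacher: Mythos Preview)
Your proposal has a genuine gap, and it is precisely the one you flag at the end: the uniformity in $k$ of the primitive-prime-divisor step is never established, and your description of the set $\Sigma$ (``primes dividing an explicit sequence of nonzero resultants and discriminants $M_1, M_2, \dots$ associated to the iterates of $\phi$'') is too vague to see how $\Sigma$ could be chosen independently of $k$. Since the orbit $x_n$ depends on $k$, a Zsygmondy-type argument run for each $k$ separately produces an exceptional set depending on $k$; you offer no mechanism for extracting a single $k$-independent $\Sigma$ from these. Your density argument via $\sum_{p\in\Sigma}1/p<\infty$ is also unsupported, because you have not said what the $M_n$ are.

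The paper sidesteps the uniformity problem entirely by a different and much simpler device. First, it shows (Lemma~\ref{relation}) that the constant term of $S_n(1,1)$ as a polynomial in $k$ is a product of the integers $a_i := P_i(1,1)|_{k=1}$ with $i < n/2 + 1$; this is what makes Theorem~\ref{maxthm} work, since a prime $\mathfrak q$ dividing both $k$ and $S_n(1,1)$ is then forced to divide some $a_i$. Thus the correct definition is $\Sigma = \{\,p : p \mid a_n \text{ for some } n\ge 1\,\}$, the set of prime divisors of the critical orbit for the \emph{single specific map} with $k=1$. With this $\Sigma$, the hypothesis $v_p(k)=0$ for $p\in\Sigma$ gives (via $2\in\Sigma$ and Theorem~\ref{nosquare}) that no $\delta_n$ is a square, and Theorem~\ref{maxthm} then yields $G_\infty\cong C_\infty$. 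For the density-zero claim the paper uses a neat bootstrap: since $k=1$ itself trivially satisfies $v_p(1)=0$ for all $p$, the result just proved gives $G_\infty\cong C_\infty$ for $k=1$; then Theorem~\ref{densitythm} (applied to $\phi(x)=(x^2+1)/x$ and $a_0=1$) shows that the set of primes dividing some $a_n$ has density zero. No analytic estimate on prime divisors of resultants is needed.
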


For more on $\Sigma$, see Corollary \ref{maincorQ} and the remark following.  We note that $\Sigma$ contains~$2$ and primes congruent to 1 modulo 4, so that if $k$ is an integer divisible only by primes congruent to 3 modulo 4, then Theorem \ref{mainmax} applies.  We note that Theorem \ref{mainmax} may be far from best possible; indeed, we have been unable to find a single $k \in \Z$ for which $[G_\infty : C_\infty] > 1$.  The fact that $\Sigma$ has zero density is a consequence of our analysis of $C_\infty$ and a result relating the structure of $G_\infty$ to the density of prime divisors of orbits of a large class of rational fuctions (Theorem \ref{upbound}).

In order to prove our main results, we generalize techniques from \cite{quaddiv, odoni, stoll} that treat the case of $\phi$ a polynomial.  In particular, in Theorem~\ref{discthm} we obtain a formula for the discriminant of the numerator of $\phi^n$, where $\phi$ is a rational function of degree $d \geq 2$, a problem that has interest in its own right (see~\cite{HC}).   In the case $d=2$, we examine the irreducibility of the numerators of $\phi^n$, both in the general quadratic case (Theorem~\ref{irredcritgen}) and in the case $\phi(x) = k(x^2 + b)/x$ (Theorem \ref{irredthm}).  
We also analyze the extensions $K_n/K_{n-1}$, where $K_i = K(\phi^{-i}(\alpha))$.  We give a criterion for $[K_{n} : K_{n-1}]$ to be as large as possible, both in the general quadratic case (Corollary \ref{maxcor2}) and in the case where $\phi(x) = k(x^2 + b)/x$ (Theorem \ref{specmaxcor1}).  We use the former criterion to prove Theorem \ref{nonpoly} (see the discussion following Corollary \ref{maxcor2}). The criteria for both irreducibility and maximality of the field extensions are arithmetic, and depend on knowledge of primes dividing elements of the form $\phi^n(\gamma)$, where $\gamma$ is a critical point of $\phi$.  We assemble these pieces to prove the following result, which is the main engine behind 
Theorems \ref{odddeg}, \ref{mainthm}, and \ref{mainmax}.  
\begin{theorem} \label{finiteindex1}
Let $\phi(x) = k(x^2 + 1)/x$ and $\alpha = 0$, and suppose that $\phi$ is not post-critically finite.  Put 
\[
\delta_n  = 
\begin{cases}
2k^2 & \text{ if } n=1,\\
\delta_{n-1}^2 + \epsilon_{n-1}^2 & \text{ if } n\geq 2, 
\end{cases}
\qquad \text{and} \qquad
\epsilon_n =
\begin{cases}
 k & \text{ if } n=1,\\
\frac{\delta_{n-1}\epsilon_{n-1}}{k}  &  \text{ if } n \geq 2.
\end{cases}
 \]  
 If none of $-1$, $\delta_n$, or $-\delta_n$ is a square for $n \geq 2$, then $[G_\infty : C_\infty]$ is finite.  
\end{theorem}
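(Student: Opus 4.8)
The plan is to prove that, under the stated hypotheses, the extension $K_n/K_{n-1}$ has the maximal possible degree for every $n$, and to deduce finiteness of the index from this. Recall that the Galois action commutes with $A_{\phi,\alpha}$, so $G_\infty \subseteq C_\infty$; thus the relevant quantity is $[C_\infty : G_\infty]$, and since $G_\infty$ and $C_\infty$ are closed subgroups of the topologically finitely generated group $\Aut(T_\alpha)$ it suffices to bound the indices $[C_n : G_n]$ of their level-$n$ images. These indices are non-decreasing in $n$, and $[C_n : G_n] = [C_{n-1} : G_{n-1}]$ exactly when $[K_n : K_{n-1}]$ equals its maximal possible value $|C_n|/|C_{n-1}|$; so $[C_\infty : G_\infty]$ is finite as soon as this maximality holds for all large $n$, and we aim to establish it for all $n$. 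From the description of $C_\infty$ at the start of Section~\ref{galaut} one has $|C_n| = 2^{2^{n-1}}$, so the $n=1$ case asserts $[K_1:K]=2$. Since $\phi^{-1}(0) = \{\pm\sqrt{-1}\}$ we have $K_1 = K(\sqrt{-1})$, and this holds precisely because $-1$ is not a square in $K$; note that $\sqrt{-1} \in K_1 \subseteq K_m$ for all $m$, a fact we use throughout.

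For the inductive step, fix $n \geq 2$ and assume $K_m/K_{m-1}$ is maximal for all $m < n$. By Theorem~\ref{irredthm} with $b=1$, the numerator of $\phi^n$ is irreducible over $K$, so $G_n$ acts transitively on $V_n$; and the maximality criterion for $\phi(x) = k(x^2+b)/x$ (Theorem~\ref{specmaxcor1}) reduces maximality of $K_n/K_{n-1}$ to the non-vanishing, modulo squares in $K_{n-1}$, of a single discriminant-type element $D_n \in K^*$. To identify $D_n$ I would note that a preimage under $\phi$ of $\beta \in V_{n-1}$ satisfies $kx^2 - \beta x + k = 0$, so the new quadratic extension attached to $\beta$ is $K_{n-1}(\sqrt{\beta^2 - 4k^2})$, and that $\beta^2 - 4k^2 = (\beta - \phi(1))(\beta - \phi(-1))$ since the critical values of $\phi$ are $\phi(\pm 1) = \pm 2k$. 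Hence $\prod_{\beta \in V_{n-1}}(\beta^2 - 4k^2)$ is, up to the leading coefficient of the polynomial $h_{n-1}$ whose roots are the elements of $V_{n-1}$, equal to $h_{n-1}(\phi(1))\,h_{n-1}(\phi(-1))$. Combining the discriminant formula of Theorem~\ref{discthm}, the $f$-symmetry of the $V_i$ (recall $f(x) = -x$ commutes with $\phi$, so $\phi^n(-1) = -\phi^n(1)$), and the identity $\phi^n(1) = \delta_n/\epsilon_n$ built into the recursion, one computes that this product is a square in $K_{n-1}$ whose square root equals $\pm\delta_n$ modulo $(K^*)^2$. Since $\sqrt{-1} \in K_{n-1}$ the sign is immaterial, and maximality of $K_n/K_{n-1}$ is equivalent to: $\delta_n$ is not a square in $K_{n-1}$.

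The crux — and the step I expect to be hardest — is the descent from ``$\pm\delta_n$ not squares in $K$'' (the hypothesis) to ``$\delta_n$ not a square in $K_{n-1}$''. Here Theorem~\ref{discthm} bounds the ramification of $K_{n-1}/K$ in terms of the critical orbit: $K_{n-1}/K$ is unramified away from the primes dividing $2k\prod_{j<n}\delta_j\epsilon_j$. Consequently the maximal multiquadratic subextension of $K_{n-1}/K$ — which, using the inductive maximality together with the previous paragraph, is exactly $K(\sqrt{-1},\sqrt{\delta_2},\dots,\sqrt{\delta_{n-1}})$ — contains $\sqrt{\delta_n}$ only if $\delta_n \equiv (-1)^{e_0}\prod_{j=2}^{n-1}\delta_j^{e_j}$ modulo $(K^*)^2$ for some exponents $e_i \in \{0,1\}$, equivalently only if $\delta_n$ has no prime factor of odd multiplicity outside the support of $2k\prod_{j<n}\delta_j\epsilon_j$. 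The hypothesis excludes the relations with all $e_j = 0$ (i.e.\ $\delta_n \equiv \pm 1$); the remaining relations are ruled out using that $\phi$ is not post-critically finite. This last point is the technical heart: one analyzes greatest common divisors along the recursion $\delta_n = \delta_{n-1}^2 + \epsilon_{n-1}^2$, $\epsilon_n = \delta_{n-1}\epsilon_{n-1}/k$ — noting in particular that $\delta_n$ is, up to a square, a sum of two coprime squares, so its square class is supported only at $2$, at primes dividing $\gcd(\delta_{n-1},\epsilon_{n-1})$, and at primes congruent to $1$ modulo $4$ — to show that any such multiplicative relation would force the forward orbit of the critical point $1$ under $\phi$ to be finite, contrary to hypothesis.

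Putting the pieces together — the base case, the reduction via Theorems~\ref{irredthm} and~\ref{specmaxcor1}, the identification of $D_n$ with $\pm\delta_n$ via Theorem~\ref{discthm}, and the descent — gives maximality of $K_n/K_{n-1}$ for every $n$, hence $[C_\infty : G_\infty] < \infty$.
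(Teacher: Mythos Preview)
Your framework is essentially the paper's --- irreducibility via Theorem~\ref{irredthm}, then maximality of $K_n/K_{n-1}$ via Theorem~\ref{specmaxcor1}, with the criterion reduced to $\delta_n \notin K_{n-1}^{*2}$ --- but the argument breaks down at exactly the step you flag as ``the technical heart,'' and in a way that also makes you overreach on the conclusion.

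The gap is this. Your gcd analysis (which is Lemma~\ref{relprime} in the paper) shows only that a prime $\p\nmid k$ divides at most one $\delta_m$. So a relation $\delta_n \equiv \pm\prod_{j<n}\delta_j^{e_j}\pmod{K^{*2}}$ forces every odd-multiplicity prime of $\delta_n$ to lie over $2k$; equivalently, $\delta_n$ is an $S$-unit times a square, where $S$ is the set of places over $2k$ and the archimedean places. But a \emph{single} such occurrence does not force the critical orbit to be finite, so your proposed contradiction with non-PCF does not follow. (Your remark about sums of two coprime squares constrains the congruence classes of prime divisors over~$\Q$, but does not prevent $\delta_n$ from being an $S$-unit times a square, and over a general number field that observation carries even less.) Consequently your scheme cannot establish maximality at \emph{every} level, and in fact the theorem only asserts finite index, not $G_\infty=C_\infty$.

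What the paper actually does is prove maximality for all but finitely many $n$, which suffices. After enlarging $S$ so that $\calO_{K,S}$ is a PID, one writes $\delta_n = u_n\beta_n^2\prod_i\pi_i$; if the product of $\pi_i$'s were empty for infinitely many $n$, finiteness of $U_{K,S}/U_{K,S}^2$ would give a fixed $c\in K^*$ with $c\delta_n$ a square for infinitely many $n$. That yields infinitely many $K$-points on $y^2 = c\,k^{-1}p_3^*(x)$ with $x=(\phi^*)^{n-3}(1)$ --- distinct because $\phi$ is not post-critically finite --- and this curve has genus~$3$, contradicting Faltings. So for all but finitely many $n$ there is a prime $\p$ with $v_\p(\delta_n)$ odd and $v_\p(2k)=0$; Lemma~\ref{relprime} then gives $v_\p(\delta_j)=0$ for $j<n$, and Theorem~\ref{specmaxcor1} applies. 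The non-PCF hypothesis enters only to guarantee infinitely many distinct points for the Faltings step, not through any gcd mechanism.
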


See Theorem \ref{finiteindex} for a slightly more general statement.  The sequence $(\delta_n, \epsilon_n)$ is related to the orbit of the critical point $1$ of $\phi$ in that $\phi^n(1) = \delta_n/\epsilon_n$.  What makes Theorem \ref{finiteindex1} possible is the fact that $0$ is a pre-periodic point for $\phi$, which ensures that the set of common prime ideal divisors of $\delta_i$ and $\delta_j$ is very restricted (see Lemma \ref{relprime}).   This allows one to show that except in very special circumstances there must be a primitive prime divisor $\p$ of $\delta_n$ (that is, $\p$ does not divide $\delta_i$ for $i < n$) that divides $\delta_n$ to odd multiplicity.  This is the key hypothesis of Theorem \ref{specmaxcor1}.  The condition that $0$ is pre-periodic has also been used to study primitive prime divisors in other dynamical sequences \cite{xander, rice}.  A natural hope is that similar techniques might be used to tackle Conjecture~\ref{genmain} and even Question~\ref{questmain}, even in the case where $A_{\phi}$ is trivial.

\section{Preliminaries and notation} \label{preliminaries}

In this section we fix some notation, and we show how to reduce  Conjecture~\ref{genmain} to Conjecture~\ref{mainconj} when   $\# A_{\phi,\alpha} > 1$.  For any $g \in \PGL_2(\overline K)$,  define the conjugate map
\[
\phi^g = g\phi g^{-1} .
\]

\begin{proposition}\label{prop: twist prop}
Suppose $\phi \in K(z)$ and basepoint $\alpha \in K$ satisfy $[C_\infty : G_\infty] < \infty $.  Let   $g \in \PGL_2(\overline K)$ such that  $\phi^g  \in K(z)$.  Then  the finite index result  also holds for $\phi^g$ with the basepoint $g(\alpha)$.
\end{proposition}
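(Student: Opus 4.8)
The plan is to use the conjugating map $g$ itself as an isomorphism between the two preimage trees, transport the subgroups $C_\infty$ and $G_\infty$ across it, and then descend to a finite extension of $K$ over which $g$ is rational in order to carry out the finite-index bookkeeping. Write $T_\alpha$ for the tree of $(\phi,\alpha)$ and $T_{g(\alpha)}$ for that of $(\phi^g,g(\alpha))$, with $C_\infty,G_\infty$ and $C_\infty',G_\infty'$ the associated groups. Since $\phi^g=g\phi g^{-1}$ we have $(\phi^g)^n=g\phi^ng^{-1}$, hence $\big((\phi^g)^n\big)^{-1}(g(\alpha))=g\big(\phi^{-n}(\alpha)\big)$ for all $n$; thus $\beta\mapsto g(\beta)$ is a bijection $\phi^{-n}(\alpha)\to\big((\phi^g)^n\big)^{-1}(g(\alpha))$ intertwining $\phi$ and $\phi^g$ (because $g(\phi(\beta))=\phi^g(g(\beta))$), so it defines an isomorphism of rooted trees $\iota_g\colon T_\alpha\to T_{g(\alpha)}$, and conjugation by $\iota_g$ gives an isomorphism $c_g\colon\Aut(T_\alpha)\to\Aut(T_{g(\alpha)})$.

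Next I would check that $c_g$ matches up the data defining the two centralizers. Conjugating the defining relation gives $A_{\phi^g}=gA_\phi g^{-1}$, and since $f(g(\alpha))=g(\alpha)$ precisely when $(g^{-1}fg)(\alpha)=\alpha$, also $A_{\phi^g,g(\alpha)}=gA_{\phi,\alpha}g^{-1}$. For $f\in A_{\phi,\alpha}$, the automorphism $c_g(f)$ of $T_{g(\alpha)}$ sends $g(\beta)\mapsto g(f(\beta))=(gfg^{-1})(g(\beta))$, so $c_g(f)$ is exactly the action of $gfg^{-1}\in A_{\phi^g,g(\alpha)}$ on $T_{g(\alpha)}$. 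Hence $c_g$ carries the action of $A_{\phi,\alpha}$ on $T_\alpha$ to the action of $A_{\phi^g,g(\alpha)}$ on $T_{g(\alpha)}$, and therefore $c_g(C_\infty)=C_\infty'$.

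Now let $L$ be a finite extension of $K$ with $g\in\PGL_2(L)$ and $g(\alpha)\in L$. For $\tau\in\Gal(\overline K/L)$ we have $g^\tau=g$, so $\tau(g(\beta))=g(\tau(\beta))$, which says exactly that $\iota_g$ is $\Gal(\overline K/L)$-equivariant; since $\phi,\phi^g\in K(z)$, restricting the two arboreal representations to $\Gal(\overline K/L)$ and calling their images $H$ and $H'$, we get $H'=c_g(H)$. Because $[\Gal(\overline K/K):\Gal(\overline K/L)]=[L:K]<\infty$, we have $H\leq G_\infty\leq C_\infty$ with $[G_\infty:H]\leq[L:K]$, so $[C_\infty:H]=[C_\infty:G_\infty]\,[G_\infty:H]<\infty$; applying the isomorphism $c_g$ gives $[C_\infty':H']=[C_\infty:H]<\infty$, and since $H'\leq G_\infty'\leq C_\infty'$ this forces $[C_\infty':G_\infty']<\infty$, as required.

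The argument is essentially formal, and the only point needing care is that $g$ need only lie in $\PGL_2(\overline K)$ rather than in $\PGL_2(K)$: this is why the equivariance of $\iota_g$ is available only over the finite extension $L$, and why we compare $G_\infty$ with $G_\infty'$ through the finite-index subgroups $H,H'$ rather than directly. The same point underlies the containment $G_\infty'\subseteq C_\infty'$ needed to make $[C_\infty':G_\infty']$ a well-posed index (just as the hypothesis presupposes $G_\infty\subseteq C_\infty$): a finite group of Mobius transformations with a common fixed point is cyclic in characteristic zero — conjugate it into the affine group fixing $\infty$, where the subgroup of translations is finite hence trivial since $(\overline{K},+)$ is torsion-free, so the group injects into $\overline{K}^{\times}$ — whence $A_{\phi,\alpha}$ is abelian; together with the fact that $g^{-1}g^\tau\in A_{\phi,\alpha}$ for every $\tau\in\Gal(\overline K/K(g(\alpha)))$ (indeed $g^{-1}g^\tau\in A_\phi$ because $(\phi^g)^\tau=\phi^g$ and $\phi^\tau=\phi$, and it fixes $\alpha$ because $g^\tau(\alpha)=(g(\alpha))^\tau=g(\alpha)$), this yields $(gfg^{-1})^\tau=gfg^{-1}$ for all $f\in A_{\phi,\alpha}$, i.e.\ $A_{\phi^g,g(\alpha)}$ is defined over $K(g(\alpha))$ and hence commutes with the Galois action on $T_{g(\alpha)}$.
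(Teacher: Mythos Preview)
Your proof is correct and follows the same strategy as the paper's: both establish $A_{\phi^g,g(\alpha)}=gA_{\phi,\alpha}g^{-1}$ and $(\phi^g)^{-n}(g(\alpha))=g(\phi^{-n}(\alpha))$, then pass to a finite extension $L/K$ over which $g$ is defined to compare the two Galois images. The paper's version is much sketchier---it simply observes that the preimage fields differ by an extension of degree at most $[L:K]$ (citing an external lemma for a uniform bound on $[L:K]$, which is in fact unnecessary since any finite $L$ works) and asserts that ``the finite index result follows.'' You make this explicit by constructing the tree isomorphism $\iota_g$, showing $c_g(C_\infty)=C_\infty'$, and carrying out the index bookkeeping through the subgroups $H=c_g^{-1}(H')$ obtained by restricting to $\Gal(\overline K/L)$; this is a genuine gain in clarity.

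One small gap in your final paragraph: from $A_{\phi,\alpha}$ abelian and $g^{-1}g^\tau\in A_{\phi,\alpha}$ you only get $(gfg^{-1})^\tau=g\,f^\tau g^{-1}$, so you still need $f^\tau=f$, i.e.\ $A_{\phi,\alpha}\subseteq\PGL_2(K)$. This does hold here: in the degree-$2$ setting $\#A_{\phi,\alpha}\le 2$, so the nontrivial element (if any) is unique and hence Galois-stable; alternatively, the standing hypothesis $G_\infty\subseteq C_\infty$ already forces it, since $\sigma f=f\sigma$ on the infinite tree for all $\sigma\in G_K$ means $f^\sigma=f$. The paper does not address this point at all, so your treatment is still more careful even with this omission.
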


\begin{proof}
To simplify notation, we let $\psi = \phi^g$.  A  computation reveals that  
\begin{equation}
A_{\psi, g(\alpha)} = g \circ A_{\phi, \alpha} \circ g^{-1}.  \label{eqn: conj aut groups}
\end{equation}
First suppose $g \in \PGL_2(K)$.  Then since
\begin{equation} \label{conjugacy}
\psi^{-n}(g(\alpha)) = \{g(\beta) : \beta \in \phi^{-n}(\alpha)\},
\end{equation}
 $\psi^{-n}(g(\alpha))$ and $\phi^{-n}(\alpha)$ generate the same extension of $K$.  
  
Now suppose that $\psi$ is a nontrivial twist of $\phi$, meaning $g \in \PGL_2(L)$ for some finite extension $L/K$ (here we take $L$ minimal).  From~\cite[Lemma~2.6]{lmt_explicit}, there is an absolute bound $B$ depending only on $\phi$ so that $[L:K] \leq B$.  (In fact, this bound $B$ can be chosen to depend only on the degree of~$\phi$ and not on the specific map.)

By equation~\eqref{conjugacy}, for each $n \geq 1$ an extension of $\phi^{-n}(\alpha)$ of degree at most $B$ contains $K\left(\psi^{-n}(g(\alpha))\right)$.  The finite index result  follows.
\end{proof}

  When $\deg \phi=2$,
$A_\phi$ is either trivial, cyclic of order two, or isomorphic to $S_3$ \cite{milnor}.  The third option occurs if and only if $\phi$ is conjugate over $\overline{K}$ to $1/z^2$, in which case $\phi$ is post-critically finite.  
Hence, if we assume $\# A_{\phi,\alpha} > 1$ in Conjecture~\ref{genmain}, then necessarily $\#A_{\phi, \alpha} = 2$ and $A_{\phi,\alpha} = A_\phi$.  
In this case we have from~\cite[Lemma 1]{mmplms} that $\phi$ is conjugate over $\overline{K}$ to 
\[
\psi(z) = k(z^2 + 1)/z, \  \text{ with } k \in K^* \smallsetminus \{0,  -1/2\}.
\]

  Let $g$ denote the conjugacy such that $\phi^g$ has the form above.  By equation~\eqref{eqn: conj aut groups},  we conclude that $\#A_{\psi, g(\alpha)} = 2$.   Therefore $A_{\psi, g(\alpha)} =  \{\text{id}, z \mapsto -z\}$, and since $g(\alpha)$ must be fixed by elements of this set, it follows that $g(\alpha) = 0$ or $g(\alpha) = \infty$.  Hence $g(\alpha)$ is either fixed by $\psi$ or maps to a fixed point, so   $\alpha$ is either fixed by $\phi$ or maps to a fixed point of $\phi$, as shown in Figure~\ref{fig: trees}. These two cases are illustrated in the specific case $k =  1$ in Figure \ref{fig: kisone}.

\begin{figure}[htbp]
\begin{center} 
\ifx\JPicScale\undefined\def\JPicScale{1}\fi
\unitlength \JPicScale mm
\begin{picture}(182,62.31)(0,0)
\put(7,62.31){\makebox(0,0)[tc]{$-\sqrt{\frac{-3 - \sqrt{5}}{2}}$}}

\put(27.8,62.31){\makebox(0,0)[tc]{$\sqrt{\frac{-3 + \sqrt{5}}{2}}$}}

\put(55.8,62.31){\makebox(0,0)[tc]{$-\sqrt{\frac{-3 + \sqrt{5}}{2}}$}}

\put(78.2,62.31){\makebox(0,0)[tc]{$\sqrt{\frac{-3 - \sqrt{5}}{2}}$}}

\linethickness{0.36mm}
\multiput(67.05,36.33)(0.12,0.24){66}{\line(0,1){0.24}}
\put(67.05,36.33){\vector(-1,-2){0.12}}
\linethickness{0.36mm}
\multiput(58.58,52.49)(0.12,-0.28){58}{\line(0,-1){0.28}}
\put(65.56,36.33){\vector(1,-2){0.12}}
\linethickness{0.36mm}
\multiput(19.65,36.98)(0.12,0.31){50}{\line(0,1){0.31}}
\put(19.65,36.98){\vector(-1,-3){0.12}}
\linethickness{0.36mm}
\multiput(10.18,52.49)(0.12,-0.25){62}{\line(0,-1){0.25}}
\put(17.66,36.98){\vector(1,-2){0.12}}
\put(65.98,34.66){\makebox(0,0)[tc]{$i$}}

\put(18.32,34.66){\makebox(0,0)[tc]{$-i$}}

\linethickness{0.36mm}
\multiput(44.2,4.65)(0.12,0.15){170}{\line(0,1){0.15}}
\put(44.2,4.65){\vector(-3,-4){0.12}}
\linethickness{0.36mm}
\multiput(20.3,29.51)(0.12,-0.14){173}{\line(0,-1){0.14}}
\put(41.5,4.65){\vector(3,-4){0.12}}
\put(42.75,3.37){\makebox(0,0)[tc]{$0$}}

\put(98,61){\makebox(0,0)[tc]{$\infty$}}

\put(116.3,61.31){\makebox(0,0)[tc]{$0$}}

\put(132.75,61.31){\makebox(0,0)[tc]{$-i$}}

\put(152.56,61.31){\makebox(0,0)[tc]{$i$}}

\linethickness{0.36mm}
\multiput(142.71,35.33)(0.12,0.26){81}{\line(0,1){0.26}}
\put(142.71,35.33){\vector(-1,-2){0.12}}
\linethickness{0.36mm}
\multiput(133.82,56)(0.12,-0.33){63}{\line(0,-1){0.33}}
\put(141.38,35.33){\vector(1,-3){0.12}}
\linethickness{0.36mm}
\multiput(107.86,35.98)(0.12,0.32){62}{\line(0,1){0.32}}
\put(107.86,35.98){\vector(-1,-3){0.12}}
\linethickness{0.36mm}
\multiput(98.44,56)(0.12,-0.31){64}{\line(0,-1){0.31}}
\put(106.1,35.98){\vector(1,-3){0.12}}
\put(141.77,33){\makebox(0,0)[tc]{$0$}}

\put(106.68,33){\makebox(0,0)[tc]{$\infty$}}

\linethickness{0.36mm}
\multiput(126.74,4)(0.12,0.2){118}{\line(0,1){0.2}}
\put(126.74,4){\vector(-2,-3){0.12}}
\linethickness{0.36mm}
\multiput(108.43,28.51)(0.12,-0.2){123}{\line(0,-1){0.2}}
\put(123.2,4){\vector(2,-3){0.12}}
\put(125.09,3){\makebox(0,0)[tc]{$\infty$}}

\put(161.08,44){\makebox(0,0)[cc]{}}

\end{picture}

\end{center}
\caption{First few levels of the preimage trees of $0$ and $\infty$ under $\phi(x) = (x^2 + 1)/x$}  \label{fig: kisone}
\end{figure}
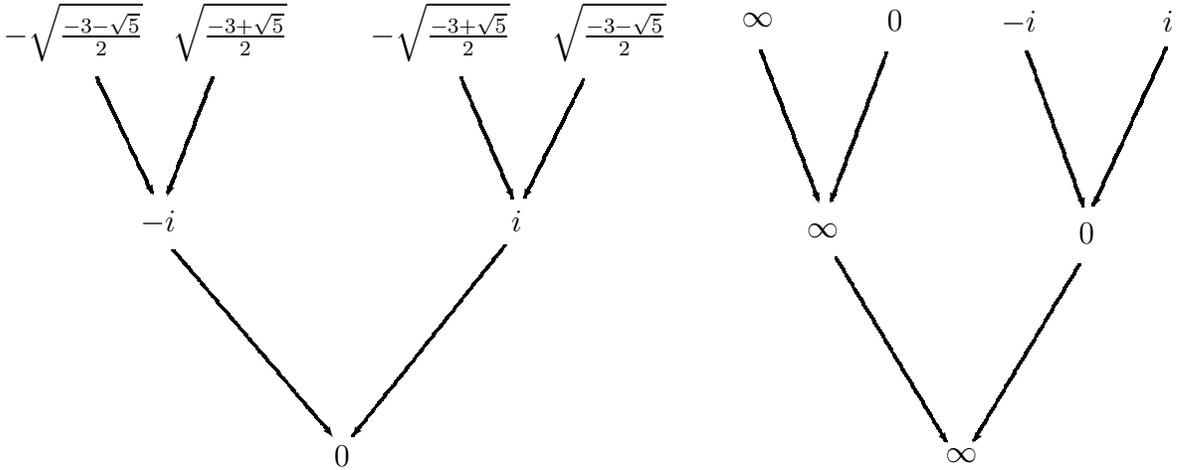

When $g(\alpha) = \infty$, using the notation of Figure \ref{fig: trees}, we have $V_1 = \{0\}$, and thus $V_n = \phi^{-(n-1)}(0)$.  It follows that the arboreal representation is the same as the case $g(\alpha) = 0$.  Thus to prove Conjecture~\ref{genmain}, we need only consider pairs of the form 
\[
(\phi, \alpha) = \left(k(z^2 + 1)/z, 0\right).
\]
Therefore to establish Conjecture~\ref{genmain}, it is enough to prove Conjecture~\ref{mainconj}. 
 

We return now to the general case and establish some notation.  Let $K$ be a number field, and $\phi : \mathbb{P}^1 \to \mathbb{P}^1$ a rational function defined over $K$.  Suppose that $\phi([X,Y]) = [P(X,Y), Q(X,Y)]$ in homogeneous coordinates, with $P(X,Y)$ and $Q(X,Y)$ having no common factors of positive degree.  Fix particular choices of $P$ and $Q$, and let 
\begin{align*}
\Phi(X,Y)\colon \mathbb{A}^2 &\to \mathbb{A}^2\\
(X,Y) & \mapsto (P(X,Y), Q(X,Y)) 
\end{align*}
be a lift of $\phi$.
  Define $P_n, Q_n \in K[X,Y]$ by 
\begin{align}
	\Phi^n(X,Y) &:= (P_n(X,Y), Q_n(X,Y)), \text{ where} \label{pndef} \\
P_n(X,Y) &= P_{n-1}(P(X,Y),Q(X,Y)) \text{ and } \nonumber\\
  Q_n(X,Y) &= Q_{n-1}(P(X,Y),Q(X,Y)), \text{ or equivalently}\nonumber \\
  P_n(X,Y) &= P(P_{n-1}(X,Y),Q_{n-1}(X,Y))  \text{ and }  \nonumber\\
   Q_n(X,Y) &= Q(P_{n-1}(X,Y),Q_{n-1}(X,Y)). \nonumber
\end{align}
Note that $\phi^n([X,Y]) = [P_n(X,Y), Q_n(X,Y)]$, though using homogeneous coordinates may involve cancellation of some common constant factors.

We use lower-case letters to denote de-homogenizations, and summarize our notation:

\vspace{0.15 in}

\begin{tabular}{ l l }
\vspace{0.07 in}
$\phi([X,Y]) = [P(X,Y), Q(X,Y)]$ & a rational map on $\mathbb{P}^1$ of degree $d$.\\
\vspace{0.07 in}
$\phi(x) = p(x)/q(x)$ & the dehomogenization of $\phi$.\\
\vspace{0.07 in}
$\Phi(X,Y) = (P(X,Y), Q(X,Y))$ & natural lift of $\phi$ to a map on $\mathbb{A}^2$. \\
\vspace{0.07 in}
$\Phi^n(X,Y) = (P_n(X,Y), Q_n(X,Y))$ & $n$th iterate of $\Phi$. \\
\vspace{0.07 in}
$p_n(x) = P_n(x,1)$, $q_n(x) = Q_n(x,1)$ & dehomogenized versions of $P_n, Q_n$. \\
\vspace{0.07 in}
$A_\phi = \left\{f \in \PGL_2(\overline K) : \phi\circ f = f\circ \phi \right\}$ & the automorphism group of $\phi$.\\
\vspace{0.07 in}
$A_{\phi, \alpha} = \left\{f \in A_\phi : f(\alpha) = \alpha\right\}$. & \\
\vspace{0.07 in}
$\ell(R)$ & leading coefficient of the polynomial $R$.\\
\vspace{0.07 in}
$d_R$ & the degree of $R$. \\
\vspace{0.07 in}
$V_n$ & $\phi^{-n}(0),$ unless $0$ is periodic; see~\eqref{eq : trees}.  \\
\vspace{0.07 in}
$T = \bigcup_{i \geq 1} V_n$ & preimage tree of $\phi$ with root $0$. \\
\vspace{0.07 in}
$T_n = \bigcup_{1 \leq i \leq n} V_n$ & truncation of $T$ to level $n$. \\
\vspace{0.07 in}
$K_n = K(V_n) = K(T_n)$. \\ 
\vspace{0.07 in}
$K_{\infty} = \bigcup_n K_n = K(T)$. \\
\vspace{0.07 in}
$G_n = \Gal(K_n/K)$. \\
\vspace{0.07 in}
$G_{\infty} = \Gal (K_{\infty}/K) = \invlim G_n$. \\
\end{tabular}

When we refer to a ``separable polynomial,'' we mean that the polynomial has distinct roots.
We also adopt the convention that 
$\alpha = 0$ and that $\infty$ does not appear in the pre-image tree of $\alpha = 0$; that is, we assume $\phi^n(\infty) \neq 0$.  These assumptions makes the statements and proofs of our results in Section~\ref{galgen} much simpler, and comes at no cost.  Indeed, as noted earlier in this section, the representations associated to $(\phi, \alpha)$ and $(\phi^g, g(\alpha))$ are the same for $g \in \PGL_2(K)$.  Choosing $g$ with $g(\alpha) = 0$  then reduces to the case $\alpha = 0$.  We may similarly require that $g(\beta) = \infty$, where $\beta \in K$ is any point disjoint from the preimage tree of $\alpha$.
  
Because $\alpha = 0$,  $G_n$ is the Galois group of the de-homogenized polynomial $p_n(x) = P_n(x,1) \in K[x]$.  We frequently move back and forth between $P_n(X,Y)$ and $p_n(x)$.  The de-homogenized version of the recursion for $P_n$ is
\begin{equation} \label{rec1}
p_n(x) = q(x)^{\deg P_{n-1}} p_{n-1}(p(x)/q(x)) = q(x)^{d^{n-1}} p_{n-1}(p(x)/q(x)), 
\end{equation}
or equivalently 
\begin{equation} \label{rec2}
p_n(x) = q_{n-1}(x)^{\deg P} p(p_{n-1}(x)/q_{n-1}(x)) = q_{n-1}(x)^{d} p(p_{n-1}(x)/q_{n-1}(x)),
\end{equation}
where in both cases $d = \deg P$.
These hold for all $x$ with $q(x) \neq 0$ and $q_{n-1}(x) \neq 0$, respectively.

\section{Discriminants, Irreducibility, and Galois Theory of rational functions} \label{galgen}

We begin with results concerning the discriminant of the numerator of an iterate of a rational function.  We then consider the case $d = 2$, prove results on the irreducibility of such polynomials, and then apply these results to the question of under what conditions the degree of the extension $[K_n : K_{n-1}]$ is as large as possible.  These results hold without consideration of the automorphism group $A_\phi$ of $\phi$.  In Section~\ref{galaut} we examine the case where $\#A_\phi = 2$.  

Throughout, we denote the degree of a polynomial $s(x) \in K[x]$ by $d_s$ and its leading coefficient by $\ell(s)$.  
We recall the {\em resultant} of two polynomials $g_1, g_2 \in K[x]$ may be defined as
$$\Res(g_1,g_2) = \ell(g_1)^{d_{g_2}}\prod_{g_1(\alpha) = 0} g_2(\alpha),$$
and is a homogeneous polynomial in the coefficients of $g_1$ and $g_2$ that vanishes if and only if $g_1$ and $g_2$ have a common root in $\overline{K}$.  We will also make use of the following basic equality:
\begin{equation} \label{classic}
\prod_{g_1(\alpha) = 0} g_2(\alpha) = \ell(g_2)^{d_{g_1}} \ell(g_1)^{-d_{g_2}} \prod_{g_2(\alpha) = 0} g_1(\alpha).
\end{equation}

 The discriminant of the polynomial $p_n$ will prove to be a  fundamental tool in what follows.  However,  $p_n$ is constructed as a de-homogenized polynomial $P_n$, and $P_n$ is given by a double-recursion. Standard results on calculating discriminants do not apply in this more complicated situation;  we need a new tool.  We begin with the somewhat simpler case of calculating the discriminant of a single polynomial that is the de-homogenization of a function of two other (homogeneous) polynomials.  In Theorem~\ref{discthm}, we  apply this result recursively to find a discriminant formula for $p_n$. 

\begin{lemma} \label{disclem}
Let $F, P, Q \in K[X,Y]$ be non-constant homogeneous polynomials with $\deg P = \deg Q = d$ and $P$ and $Q$ having no common roots in $\mathbb{P}^1(\overline{K})$.  Let $H(X,Y) = F(P(X,Y),Q(X,Y))$, and let $h,f,p,q\in K[x]$ denote the de-homogenizations of $H,F,P$, and $Q$, respectively.  Let $c = qp' - pq'$.  Finally, assume that $H(1,0) \neq 0$ and $F(1,0) \neq 0$.  
Then 
$$\Disc\ h = \pm \ell(h)^{k_1} \ell(q)^{k_2} \ell(f)^{k_3} \ell(c)^{k_4} (\Disc\ f)^d \left(\Res(q,p) \right)^{d_f(d_f - 2)} \prod_{c(\gamma) = 0} h(\gamma),$$
where 
\begin{align*}
k_1 &= d_fd - 2 - d_c - d_q(d_f - 2), &k_2 &= d_f(d - d_p)(d_f - 2),\\
k_3 &= (d_q - d)(d_f-2), \text{ and } &k_4 &= d_fd.
\end{align*}  
\end{lemma}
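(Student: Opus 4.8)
The plan is to compute $\Disc\, h$ via the standard product formula for discriminants, bootstrapping from the well-known relation between $\Disc\, h$, $\ell(h)$, and the product $\prod_{h(\beta)=0} h'(\beta)$, and then to evaluate $h'(\beta)$ using the chain rule applied to $H(X,Y) = F(P,Q)$. First I would recall that for a polynomial $h$ of degree $d_h$,
\[
\Disc\, h = \frac{(-1)^{d_h(d_h-1)/2}}{\ell(h)} \,\Res(h, h') = (-1)^{d_h(d_h-1)/2}\,\ell(h)^{d_h - 2}\prod_{h(\beta)=0} h'(\beta),
\]
so the whole problem reduces to controlling the roots of $h$ and the values $h'(\beta)$. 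The key computation is the chain rule: writing $h(x) = p(x)^{d_f} f\!\left(p(x)/q(x)\right)$ (up to the leading-coefficient bookkeeping coming from de-homogenization, which is where the powers of $\ell(q)$, $\ell(f)$, $\ell(h)$ will enter), one differentiates to get $h'$ expressed in terms of $f'(p/q)$, $f(p/q)$, and $c = qp' - pq'$. In fact $h'$ factors, up to leading-coefficient corrections, as a product of a power of $q$, the quantity $c$, and $f'(p/q)$ evaluated appropriately — this is the structural identity that makes the formula work.

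Next I would partition the roots $\beta$ of $h$ into two types according to this factorization of $h'$. The roots of $h(x) = F(p(x),q(x))$ are exactly the $\beta$ with $p(\beta)/q(\beta)$ a root of $f$ (using $P,Q$ having no common roots and the conditions at infinity so nothing degenerates). For such $\beta$, $f(p(\beta)/q(\beta)) = 0$, so the term $f'(p/q)$ survives; running over all roots $\gamma$ of $f$ and all $\beta$ with $p(\beta) = \gamma q(\beta)$, the product $\prod_\beta f'(p(\beta)/q(\beta))$ reassembles — via the resultant identity \eqref{classic} and the multiplicativity of $\Res$ — into a power of $\Disc\, f$ (this is the source of the $(\Disc\, f)^d$ factor) times powers of resultants $\Res(q,p)$. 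The remaining factors $\prod_\beta q(\beta)^{(\cdots)}$ and $\prod_\beta c(\beta)$ are handled by \eqref{classic}: $\prod_{h(\beta)=0} c(\beta) = \pm \ell(c)^{d_h}\ell(h)^{-d_c}\prod_{c(\gamma)=0} h(\gamma)$, which produces exactly the $\prod_{c(\gamma)=0} h(\gamma)$ appearing in the statement, along with the powers $\ell(c)^{k_4}$ and a contribution to $\ell(h)$. Similarly $\prod_{h(\beta)=0} q(\beta)$ becomes a power of $\Res(q,p)$-type quantities and leading coefficients.

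The main obstacle — and the part requiring genuine care rather than routine manipulation — is the exact bookkeeping of the leading coefficients and degrees: tracking how de-homogenization introduces factors of $\ell(q)$ and $\ell(f)$ (since $p/q$ may not behave uniformly at infinity, and $\deg P_n$ versus $\deg p_n$ can differ), verifying $d_h = d_f d - (\text{drop from infinity})$ and hence pinning down the exponents $k_1, k_2, k_3, k_4$, and confirming the degenerate cases are excluded precisely by the hypotheses $H(1,0)\neq 0$ and $F(1,0)\neq 0$ together with $P,Q$ coprime. The signs are absorbed into the $\pm$, so I would not track them carefully. I expect the cleanest route is to first do everything at the level of homogeneous polynomials / the projective line (where degrees are uniform and no leading-coefficient corrections appear), obtain a clean identity, and only then dehomogenize, collecting all the $\ell(\cdot)$ powers in one final pass; this isolates the messy part and makes the exponent formulas $k_1,\dots,k_4$ fall out by a degree count.
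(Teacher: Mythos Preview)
Your approach is essentially the same as the paper's: start from $\Disc\, h = \pm \ell(h)^{d_h-2}\prod_{h(\alpha)=0} h'(\alpha)$, differentiate $h(x) = q(x)^{d_f} f(p(x)/q(x))$ by the chain rule, observe that at each root $\alpha$ of $h$ the term involving $f(p/q)$ vanishes so that $h'(\alpha)$ factors into a power of $q(\alpha)$, the value $c(\alpha)$, and $f'(p(\alpha)/q(\alpha))$, and then handle each of the three resulting products by the resultant-flip identity \eqref{classic}. Two small slips to fix: you wrote $h(x) = p(x)^{d_f} f(p/q)$ where it should be $q(x)^{d_f} f(p/q)$, and there is no ``partition of the roots of $h$ into two types'' --- all roots of $h$ satisfy $f(p(\alpha)/q(\alpha))=0$, and what you are really separating are the three \emph{factors} of $h'(\alpha)$, not classes of roots.
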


\begin{proof}
By definition, 
\begin{equation} \label{e1}
\Disc\ h = \pm \ell(h)^{-1}\Res(h, h') = \pm
\ell(h)^{{d_h} - 2} \prod_{h(\alpha) = 0} h'(\alpha).
\end{equation}
De-homogenizing $H$ gives $h(x) = q(x)^{d_F}f\left(p(x)/q(x)\right)$ provided $q(x) \neq 0$, and thus 
\begin{equation} \label{e2}
h' = d_F q^{d_F-1}q' \cdot f(p/q) + q^{d_F} f'(p/q) \cdot (p/q)',
\end{equation}
assuming $q(x) \neq 0$.  Since $F(1,0) \neq 0$, no root $\alpha$ of $h$ can satisfy $q(\alpha) = 0$.  This implies that for each $\alpha$ with $h(\alpha) = 0$, we have $f(p(\alpha)/q(\alpha)) = 0$, so the first summand in~\eqref{e2} vanishes when $x = \alpha$.   We may thus rewrite the right side of~\eqref{e1} as 
\begin{equation} \label{e3}
\pm \ell(h)^{{d_h} - 2} \left( \prod_{h(\alpha) = 0} q(\alpha) \right)^{d_F - 2}  \prod_{h(\alpha) = 0} (f' \circ p/q) (\alpha)  \prod_{h(\alpha) = 0} (p'q - q'p)(\alpha).
\end{equation} 

Using~\eqref{classic}, the first product in~\eqref{e3} is equal to $\left(\ell(q)^{d_h}  \ell(h)^{-d_q} \prod_{q(\pi) = 0} h(\pi)\right)^{d_F-2}$.  Moreover, for each root $\pi$ of $q$ we have
\[
	h(\pi) = H(\pi,1) = F \left( P(\pi,1), Q(\pi,1) \right) = F \left( P(\pi,1), 0 \right) = \ell(f) p(\pi)^{d_F}.
\]
Hence the first product in~\eqref{e3} becomes 
\begin{align*}
	\Bigg(\ell(q)^{d_h} \ell(h)^{-d_q} \ell(f)^{d_q} \prod_{q(\pi) = 0}  & p(\pi)^{d_f}\Bigg)^{{d_F}-2}\\
	&=
	\left(\ell(q)^{d_h} \ell(h)^{-d_q} \ell(f)^{d_q} \left(\ell(q)^{-d_p} \Res(q,p)\right)^{d_f}\right)^{{d_F}-2}.
\end{align*}

Turning to the second product in~\eqref{e3}, we have already noted that $h(\alpha) = 0$ implies $p(\alpha)/q(\alpha)$ is a root of $f$.  Moreover, for each root $\beta$ of $f$, there are with multiplicity precisely $d$ elements $\alpha$ with $p(\alpha)/q(\alpha) = \beta$ (this is ensured by the assumption that $H(1,0) \neq 0$).  Thus as $\alpha$ runs over all roots of $h$, $(p/q)(\alpha)$ runs over all roots of $f$, hitting each one $d$ times.  Hence the second product in~\eqref{e3} equals 
\[
	\left(\prod_{f(\beta) = 0} f'(\beta)\right)^d = \left(\ell(f)^{-(d_{f}-2)} \Disc (f)\right)^d.
\]

From~\eqref{classic}, the third product in~\eqref{e3} equals 
\[
	\ell(c)^{d_h} \ell(h)^{-d_c} \prod_{c(\gamma) = 0} h(\gamma).
\]

Gathering the terms containing $\ell(h)$ and $\ell(f)$, and using the fact that $d_f = d_F$ (since $F(1,0) \neq 0$) and $d_h = dd_f$ (since $H(1,0) \neq 0$) completes the proof.  
\end{proof}

\begin{remark}
Note the conditions  $H(1,0) = 0$ and $F(1,0) = 0$ correspond to our  assumption that $\infty$ is not in the pre-image tree of $\alpha = 0$.  These assumptions greatly ease an (already complicated) calculation, but 
it is certainly possible to obtain similar formulas in the case that $H(1,0) = 0$ or $F(1,0) = 0$.  In these cases $F$ factors as a product $F_1F_2$ with $F_2(1,0) \neq 0$ and $H_2(1,0) \neq 0$, where $H_2 := F_2(P,Q)$.  One then splits the product on the right side of~\eqref{e1} into the product over the roots of $h_2$ and the product over the remaining roots of $h$.  
\end{remark}

\begin{theorem} \label{discthm}
Let $\phi = p(x)/q(x) \in K(x)$ be a rational function of degree $d \geq 2$, let $n \geq 2$, and define $p_n$ and $q_n$ recursively so that $\phi^n = p_n(x)/q_n(x)$.  Let $c = qp' - pq'$.  Assume that $\phi^n(\infty) \neq 0$ and $\phi^{n-1}(\infty) \neq 0$.  If $\phi(\infty) \neq \infty$, then
\begin{equation} \label{discform1}
\Disc\ p_n = \pm \ell(p_n)^{k_1} \ell(q)^{k_2} \ell(c)^{k_3} (\Disc\ p_{n-1})^d \left(\Res(q,p) \right)^{d^{n-1}(d^{n-1} - 2)} \prod_{c(\gamma) = 0} p_n(\gamma),
\end{equation}
where 
\[
k_1 = 2d - 2 - d_c , \qquad k_2 = d^{n-1}(d - d_p)(d^{n-1} - 2), \text{ and } \qquad k_3 = d^n.
\]
If $\phi(\infty) = \infty$, then
\begin{equation} \label{discform2}
\Disc\ p_n = \pm \ell(p)^{k_1} \ell(c)^{k_2} (\Disc\ p_{n-1})^d \left(\Res(q,p) \right)^{d^{n-1}(d^{n-1} - 2)} \prod_{c(\gamma) = 0} p_n(\gamma),
\end{equation}
where 
\begin{equation}
	k_1 = d^{2n-1} - d_q(d^{2n-2} - 2d^{n-1}) - d_c(1 - d^n)/(1-d) - 2
	 \text{ and }
	 \qquad k_2 = d^n.\label{k1value}
\end{equation}
\end{theorem}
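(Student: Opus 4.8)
The plan is to derive both~\eqref{discform1} and~\eqref{discform2} from a single application of Lemma~\ref{disclem}. The recursion~\eqref{pndef} gives $P_n(X,Y) = P_{n-1}(P(X,Y),Q(X,Y))$, so I would apply Lemma~\ref{disclem} with $F = P_{n-1}$ and with $(P,Q)$ the chosen homogeneous lift of $\phi$, of common degree $d$ and with no common roots in $\PP^1(\overline{K})$. In the notation of that lemma this makes $H = P_n$, hence $h = p_n$ and $f = p_{n-1}$, while $p,q$ are the de-homogenizations of the numerator and denominator of $\phi$ and $c = qp'-pq'$ is precisely the polynomial appearing in the theorem. The two hypotheses of the lemma translate directly: $F(1,0) = P_{n-1}(1,0) \neq 0$ is the assumption $\phi^{n-1}(\infty)\neq 0$, and $H(1,0) = P_n(1,0)\neq 0$ is the assumption $\phi^n(\infty)\neq 0$; these same conditions give the degrees $d_f = \deg p_{n-1} = d^{n-1}$ and $\deg p_n = d^n$ that one needs to evaluate the exponents.

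With these identifications, Lemma~\ref{disclem} yields
\[
\Disc\ p_n = \pm\, \ell(p_n)^{a_1}\,\ell(q)^{a_2}\,\ell(p_{n-1})^{a_3}\,\ell(c)^{a_4}\,(\Disc\ p_{n-1})^d\,\bigl(\Res(q,p)\bigr)^{d^{n-1}(d^{n-1}-2)}\prod_{c(\gamma)=0}p_n(\gamma),
\]
where $a_1 = d^n-2-d_c-d_q(d^{n-1}-2)$, $a_2 = d^{n-1}(d-d_p)(d^{n-1}-2)$, $a_3 = (d_q-d)(d^{n-1}-2)$, and $a_4 = d^n$. It remains only to simplify the leading-coefficient exponents, and the dichotomy $\phi(\infty)=\infty$ versus $\phi(\infty)\neq\infty$ enters solely through what it tells us about $\deg p$ and $\deg q$.

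If $\phi(\infty)\neq\infty$, then $Q(1,0)\neq 0$, so $\deg q = d$; substituting $d_q = d$ makes $a_3 = 0$ (so the $\ell(p_{n-1})$ factor disappears) and collapses $a_1$ to $2d-2-d_c$, which is exactly~\eqref{discform1}. If instead $\phi(\infty)=\infty$, then $Q(1,0)=0$, so $\deg q < d$ and hence $\deg p = d$ because $\phi$ has degree $d$; now $a_2 = 0$ (the $\ell(q)$ factor disappears) while the $\ell(p_{n-1})$ factor survives, and the point becomes to merge the $\ell(p_n)$ and $\ell(p_{n-1})$ contributions into a single power of $\ell(p)$. For this one checks by an easy induction that $Q_m(1,0)=0$ and $P_m(1,0) = P\bigl(P_{m-1}(1,0),0\bigr) = \ell(p)\,P_{m-1}(1,0)^d$ for all $m$, so that (using $P_1(1,0)=\ell(p)$) one gets $\ell(p_m) = P_m(1,0) = \ell(p)^{1+d+\cdots+d^{m-1}}$. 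Substituting $\ell(p_n)$ and $\ell(p_{n-1})$ in terms of $\ell(p)$ into $\ell(p_n)^{a_1}\ell(p_{n-1})^{a_3}$ and telescoping the geometric sums then produces $\ell(p)^{k_1}$ with $k_1$ as in~\eqref{k1value}, which gives~\eqref{discform2}.

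The genuinely fiddly step is this last telescoping. The exponents $d_c$ and $d_q$ must be kept symbolic — leading terms can cancel in $c = qp'-pq'$, and in the case $\phi(\infty)=\infty$ the polynomial $q$ has degree strictly below $d$ — and it is easy to drop a sign or misindex a geometric sum. What makes everything fit is the pair of identities $(1+d+\cdots+d^{m-1})(d-1)=d^m-1$ and $d(1+d+\cdots+d^{m-2}) = (1+d+\cdots+d^{m-1})-1$; applied to $\tfrac{d^n-1}{d-1}a_1+\tfrac{d^{n-1}-1}{d-1}a_3$ they collapse it to the stated value of $k_1$, and the $d_c$-part of that exponent comes out to $-d_c(1-d^n)/(1-d)$, matching~\eqref{k1value}.
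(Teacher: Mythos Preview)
Your proposal is correct and follows essentially the same approach as the paper's own proof: both apply Lemma~\ref{disclem} with $F = P_{n-1}$ and $H = P_n$, then split into the two cases according to whether $d_q = d$ (so $a_3 = 0$) or $d_p = d$ (so $a_2 = 0$), and in the latter case use the recursion $\ell(p_m) = \ell(p)\,\ell(p_{m-1})^d$ to rewrite $\ell(p_n)^{a_1}\ell(p_{n-1})^{a_3}$ as a power of $\ell(p)$. Your write-up is in fact slightly more explicit than the paper's in justifying the leading-coefficient recursion via $Q_m(1,0)=0$ and $P_m(1,0)=\ell(p)P_{m-1}(1,0)^d$.
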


\begin{remark}
Suppose that $\phi$ is post-critically finite, so the forward orbit of each $\gamma$ with $c(\gamma) = 0$ is finite.  An induction on equation~\eqref{rec1} shows that the set of primes dividing  $p_n(\gamma)$ for all critical points $\gamma$ and all $n\geq 1$ is finite.  If  $\phi(\infty) = \infty$, induction on equation~\eqref{discform2} then shows that the set of primes dividing $\Disc (p_n)$ for any $n$ is likewise finite.

When $\phi(\infty) \neq \infty$ and $d_c = 2d - 2$ (in other words, when $\infty$ is not a critical point), the term $\ell(p_n)$ does not contribute to the product in equation~\eqref{discform1}, so the set of primes dividing $\Disc(p_n)$ for any $n$ is finite in this case as well.
When $\phi(\infty) \neq \infty$ and $\infty$ is a critical point, then from~\eqref{discform1}, we have that $\Disc\ p_n$ is divisible by $\ell(p_n)$.  However, 
$\ell(p_n) = P_n(1,0)$, and since by assumption $\phi$ is post-critically finite and $\infty$ is a critical point, 
$P_n(1,0)$ can take on only finitely many values as $n$ varies.  

Hence,  Theorem~\ref{discthm} shows that if $\phi$ is post-critically finite, then there is a finite set of primes $S$ such that for every $n\geq 1$, $\Disc\ p_n$ is divisible only by primes in $S$.   In this case, then, the field $K_{\infty}$ is ramified over only finitely many primes of $K$.  (For a generalization of this result, see \cite{HC}.)  
\end{remark}

\begin{proof}
Since $\phi^n(\infty) \neq 0$, and $\phi^{n-1}(\infty)  \neq 0$, we have $P_n(1,0) \neq 0$ and $P_{n-1}(1,0) \neq 0$, respectively.  We may thus apply Lemma~\ref{disclem} with $F = P_{n-1}$ and $H = P_n$.  Note also that $\phi^{n-1}(\infty) \neq 0$ implies that $\deg p_{n-1} = d^{n-1}$.  Moreover, if we assume that $\phi(\infty) \neq \infty$, then $d_q = d$.  Lemma~\ref{disclem} then immediately gives formula~\eqref{discform1}.

Assuming now that $\phi(\infty) = \infty$, we have $d_p = d$, which kills the $\ell(q)$ term in Lemma~\ref{disclem}.  Moreover, $\phi(\infty) = \infty$ also implies that for all $k$, $\ell(p_k) = \ell(p)\ell(p_{k-1})^d$, and an induction gives $\ell(p_k) = \ell(p)^{(1-d^k)/(1-d)}$.  
The power of $\ell(p)$ in the expression in Lemma~\eqref{disclem} is thus
$$\frac{(1-d^n)(d^n - 2 - d_c - d_q(d^{n-1} - 2))}{1-d} + \frac{(1-d^{n-1})(d_q - d)(d^{n-1}-2)}{1-d}.$$
This simplifies to the value of $k_1$ given in equation~\eqref{k1value}.  
\end{proof}

We now consider the irreducibility of the $p_n$ in the case of quadratic rational functions.  

\begin{lemma} \label{discprop}
Let $\phi(x) \in K(x)$ have degree~$2$, let $t$ be a parameter, and let $\gamma_1, \gamma_2 \in \mathbb{P}^1(\overline{K})$ be the critical points of $\phi$.  Then there exists $C \in K$ such that 
\begin{equation} \label{discpropeqn}
\Disc_x \left(p(x) - tq(x) \right) = 
C \prod_{\phi(\gamma_i) \neq \infty} \left( \phi(\gamma_i) - t \right).
\end{equation}
Moreover, if $p(x)$ is separable, then
 $C = \Disc\ p(x) \cdot \displaystyle\prod_{\phi(\gamma_i) 
 \neq \infty} \left( \phi(\gamma_i)^{-1} \right)$.
\end{lemma}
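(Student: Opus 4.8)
The plan is to move to the target $\mathbb{P}^1$ and homogenize in the parameter $t$; this removes all bookkeeping about critical points or critical values lying over $\infty$, which is otherwise the only real difficulty. Let $P,Q\in K[X,Y]$ be the chosen homogeneous lifts of $\phi$, binary quadratic forms, write $P=aX^2+bXY+cY^2$ and $Q=a'X^2+b'XY+c'Y^2$, and set
\[
F(t_0,t_1):=\Disc_{X,Y}\!\left(t_1P(X,Y)-t_0Q(X,Y)\right)=(t_1b-t_0b')^2-4(t_1a-t_0a')(t_1c-t_0c'),
\]
a binary quadratic form in $(t_0,t_1)$ with coefficients in $K$. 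The crux of the argument is to identify the zero locus of $F$: a point $[t_0:t_1]\in\mathbb{P}^1(\overline{K})$ is a root of $F$ exactly when $t_1P-t_0Q$ has a repeated root in $\mathbb{P}^1_{X,Y}$, i.e.\ when the fiber $\phi^{-1}([t_0:t_1])$ is non-reduced, i.e.\ when $[t_0:t_1]$ is a branch point of $\phi$. Since $\deg\phi=2$, Riemann--Hurwitz forces the ramification divisor of $\phi$ to have degree $2$, so $\phi$ has exactly two ramification points $\gamma_1,\gamma_2$, each totally ramified; moreover $\phi(\gamma_1)\neq\phi(\gamma_2)$, since otherwise the fiber over the common image would have degree $\geq4$. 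Hence these are the only branch points, $F\not\equiv0$ (most fibers are reduced), and $F$ is a genuine binary quadratic with roots exactly $[\phi(\gamma_1):1]$ and $[\phi(\gamma_2):1]$, where $\phi(\gamma_i)=\infty$ is read as $[1:0]$. Therefore
\[
F(t_0,t_1)=C\prod_{i=1,2}\left(t_0-\phi(\gamma_i)\,t_1\right),
\]
the factor for an index with $\phi(\gamma_i)=\infty$ being understood as $-t_1$; here $C\in K$, since both sides lie in $K[t_0,t_1]$ (the ramification locus of the $K$-rational map $\phi$ is Galois-stable, hence so is $\{\phi(\gamma_1),\phi(\gamma_2)\}$) and the product is monic in $t_0$.

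Dehomogenizing is then immediate. Putting $(t_0,t_1)=(t,1)$ gives $F(t,1)=\Disc_{X,Y}(P-tQ)=\Disc_x\!\left(p(x)-tq(x)\right)$, since $p-tq$ has formal degree $2$ over $K(t)$ and its discriminant coincides with the discriminant of the binary form $P-tQ$. On the right, each factor with $\phi(\gamma_i)\neq\infty$ contributes $t-\phi(\gamma_i)$ and each factor with $\phi(\gamma_i)=\infty$ contributes $-1$; the signs combine so that $\Disc_x(p(x)-tq(x))=C\prod_{\phi(\gamma_i)\neq\infty}(\phi(\gamma_i)-t)$ with $C\in K$, which is the first assertion.

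For the value of $C$ when $p$ is separable, I would specialize $t=0$ in this polynomial identity. Separability of $p$ forces $\phi(\gamma_i)\neq0$ for each $i$: if $\gamma_i$ is a finite critical point with $\phi(\gamma_i)=0$ then $p(\gamma_i)=0$, and because $\gamma_i$ is a root of $c=qp'-pq'$ with $q(\gamma_i)\neq0$, also $p'(\gamma_i)=0$, so $\gamma_i$ would be a repeated root of $p$ (the remaining degenerate case $\gamma_i=\infty$ forces $\deg p=0$ and does not occur here). Thus $\prod_{\phi(\gamma_i)\neq\infty}\phi(\gamma_i)\neq0$, and setting $t=0$ --- the left side then being $\Disc\ p$ --- gives $C=\Disc\ p\cdot\prod_{\phi(\gamma_i)\neq\infty}\phi(\gamma_i)^{-1}$.

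I expect the only genuine obstacle to be the first step: confirming that $F$ is a non-degenerate binary quadratic and that its zeros are exactly the branch points of $\phi$. This is precisely where the geometry of degree-two covers is used --- two totally ramified points, with distinct images, and only finitely many branch points --- and everything afterward is formal. A reader preferring to avoid $\mathbb{P}^1$-language could instead work affinely from $\Disc_x(p-tq)=\pm\,\ell(p-tq)^{-1}\Res(p-tq,\,p'-tq')$, using the $t$-independent identity $q(p'-tq')-q'(p-tq)=qp'-pq'=c$ to rewrite the resultant via $\prod_{c(\gamma)=0}(p(\gamma)-tq(\gamma))=\prod_{c(\gamma)=0}q(\gamma)\cdot\prod_{c(\gamma)=0}(\phi(\gamma)-t)$; this reaches the same formula but requires careful tracking of powers of leading coefficients and of critical points lying over $\infty$, exactly the bookkeeping that the homogeneous viewpoint avoids.
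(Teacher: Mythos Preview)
Your proof is correct and follows essentially the same approach as the paper's: both identify the roots of $\Disc_x(p(x)-tq(x))$ with the finite critical values of $\phi$ by noting that the binary form $P-tQ$ has a repeated root precisely when $t$ is a branch point, invoke Riemann--Hurwitz to guarantee two distinct branch points, and then specialize $t=0$ to compute $C$. Your additional step of homogenizing in the target variable as well, working with $F(t_0,t_1)=\Disc_{X,Y}(t_1P-t_0Q)$, is a clean way to absorb the case $\phi(\gamma_i)=\infty$ into the same framework rather than treating it as a separate degree drop, but the underlying argument is the same.
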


\begin{remark} This is a special case of a more general phenomenon; see \cite[Proposition 1]{HC}.
\end{remark}

\begin{proof}
Note that both sides of~\eqref{discpropeqn} are in $K[t]$ (the right side because the $\phi(\gamma_i)$ are either rational or Galois-conjugate).  We show that the roots of $\Disc_x \left(p(x) - tq(x)\right)$ in $\overline{K}$ are precisely the $\phi(\gamma_i)$ with $\phi(\gamma_i) \neq \infty$, and this is enough to establish the Lemma.  

Note first that since $\phi$ is quadratic, by the Riemann-Hurwitz formula we must have $\gamma_1 \neq \gamma_2$ and $\phi(\gamma_1) \neq \phi(\gamma_2)$.   For a given $t \in \overline{K}$, the degree-two homogeneous polynomial 
$P(X,Y) - tQ(X,Y)$ has a single root in $\mathbb{P}^1(\overline{K})$ if and only if $\phi^{-1}(t)$ has a single element, which occurs precisely when $t = \phi(\gamma_i)$.  Thus $\Disc \left( P(X,Y) - tQ(X,Y)\right)$ vanishes if and only if $t = \phi(\gamma_i)$, and since $t \in \overline{K}$ we cannot have $t = \infty$.  
Finally, both $P(X,Y) - tQ(X,Y)$ and $p(x) - tq(x)$ have degree~$2$, and thus $\Disc \left(P(X,Y) - tQ(X,Y) \right)$ is the same as $\Disc_x \left(p(x) - tq(x)\right) $. 

The last statement of the proposition comes from setting $t = 0$ in~\eqref{discpropeqn} and noting that the separability of $p(x)$ implies $\Disc\ p(x)$ does not vanish, and therefore $\prod_{\phi(\gamma_i) \neq \infty} \left( \phi(\gamma_i) \right)$ cannot vanish either.
 \end{proof}

\begin{lemma} \label{irredlem}
Let $\phi(x) \in K(x)$ have degree~$2$, and for each $i \geq 0$, denote by $K_i$ the splitting field of $p_i$.  Assume that $p_{n-1}$ is irreducible in $K[x]$, let $\alpha$ be a root of $p_{n-1}$, and let $\gamma_1, \gamma_2 \in \mathbb{P}^1(\overline{K})$ be the critical points of $\phi$. Then there exists $C \in K$ such that $p_n$ is irreducible in $K[x]$ if and only if 
$$C \prod_{\phi(\gamma_i) \neq \infty} \left( \phi(\gamma_i) - \alpha \right) \not\in K_{n-1}^{*2}.$$
If $p(x)$ is separable, then $C$ has the same value as in Lemma~$\ref{discprop}$.
\end{lemma}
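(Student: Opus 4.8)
The plan is to detect irreducibility of $p_n$ one level at a time, exploiting that the roots of $p_n$ are exactly the $\phi$-preimages of the roots of $p_{n-1}$. First I would record the factorization that comes from the recursion~\eqref{rec1} with $d=2$: writing $p_{n-1}(u)=\ell(p_{n-1})\prod_\beta(u-\beta)$ over the $2^{n-1}$ roots $\beta$ of $p_{n-1}$, substitution gives
\[
p_n(x)\;=\;\ell(p_{n-1})\prod_{p_{n-1}(\beta)=0}\bigl(p(x)-\beta q(x)\bigr).
\]
The standing assumption that no iterate of $\phi$ sends $\infty$ to $0$ forces $\deg p_{n-1}=2^{n-1}$, $\deg p_n=2^n$, and also guarantees that $\phi(\infty)$ is not a root of $p_{n-1}$, so each factor $p(x)-\beta q(x)$ has degree exactly $2$ in $x$; and since $p_{n-1}$ is irreducible over the characteristic-zero field $K$, its roots $\beta$ are distinct and pairwise $\Gal(\overline K/K)$-conjugate, so these factors are pairwise coprime.

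Next I would set up the inductive irreducibility step. Fix a root $\beta_n$ of $p_n$ and put $\alpha=\phi(\beta_n)$, a root of $p_{n-1}$; then $\beta_n$ is a root of the quadratic $p(x)-\alpha q(x)\in K(\alpha)[x]$, and in the tower $K\subseteq K(\alpha)\subseteq K(\beta_n)$ we have $[K(\alpha):K]\le 2^{n-1}$ and $[K(\beta_n):K(\alpha)]\le 2$. Since a polynomial is irreducible exactly when one (hence every) of its roots generates an extension of degree equal to its degree, $p_n$ is irreducible over $K$ iff $[K(\beta_n):K]=2^n$, iff both indices in this tower are maximal, iff $p_{n-1}$ is irreducible over $K$ and $p(x)-\alpha q(x)$ is irreducible over $K(\alpha)$. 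That the last condition is independent of the choice of root follows from the conjugacy of the roots of $p_{n-1}$ together with $p,q\in K[x]$. Granting the hypothesis that $p_{n-1}$ is irreducible, the lemma is thus reduced to describing when the quadratic $p(x)-\alpha q(x)$ is irreducible over $K(\alpha)\cong K[x]/(p_{n-1})$.

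To finish: a quadratic over a field of characteristic zero is irreducible iff its discriminant is a non-square there, so $p_n$ is irreducible over $K$ iff $p_{n-1}$ is irreducible and $\Disc_x\bigl(p(x)-\alpha q(x)\bigr)\notin K(\alpha)^{*2}$. The identity~\eqref{discpropeqn} of Lemma~\ref{discprop} is an equality in $K[t]$, so specializing $t=\alpha$ (legitimate, since the degree in $x$ does not drop) gives
\[
\Disc_x\bigl(p(x)-\alpha q(x)\bigr)\;=\;C\prod_{\phi(\gamma_i)\neq\infty}\bigl(\phi(\gamma_i)-\alpha\bigr),
\]
with $C\in K$ the constant of that lemma, and with $C=\Disc(p)\cdot\prod_{\phi(\gamma_i)\neq\infty}\phi(\gamma_i)^{-1}$ when $p$ is separable (so that $\Disc(p)\neq 0$). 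Substituting this in yields the stated criterion.

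The step demanding the most care is the inductive reduction, and in particular keeping straight the field over which squareness is tested: the argument naturally produces ``non-square in the field $K(\alpha)=K[x]/(p_{n-1})$ generated by a single root'', and since a non-square in the splitting field $K_{n-1}$ is a fortiori a non-square in $K(\alpha)$, this is the form convenient for the applications, where one proves $p_n$ irreducible for all $n$ by exhibiting the displayed quantity as a non-square. A secondary, purely technical point is the treatment of degenerate configurations in which some critical point maps onto $0$ under an iterate: these make $p(x)-\alpha q(x)$ drop degree or acquire a repeated root, forcing $p_n$ to be reducible (indeed non-separable) while the right-hand side above vanishes, so the equivalence still holds provided $0$ is counted as a square; but such configurations are incompatible with $p_{n-1}$ being irreducible of degree $\ge 2$ over $K$, or are excluded once $p$ is assumed separable, so they cause no genuine difficulty.
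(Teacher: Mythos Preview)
Your tower argument $K\subset K(\alpha)\subset K(\beta_n)$ is correct and yields the sharp equivalence
\[
p_n\text{ irreducible over }K \iff \Disc_x\bigl(p(x)-\alpha q(x)\bigr)\notin K(\alpha)^{*2}.
\]
The paper argues differently: it studies the $\Gal(\overline K/K)$-orbits on the full set of $2r$ roots of $p_n$, using transitivity on the $\alpha_i$ to force the dichotomy of one orbit of size $2r$ versus two of size $r$; in the latter case each irreducible factor of $p_n$ already has degree $r$, so every root $\beta$ satisfies $K(\beta)=K(\alpha_i)\subseteq K_{n-1}$ and the fiber quadratics split over $K_{n-1}$. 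Your method is more elementary and sees only the single stem field $K(\alpha)$; the paper's works directly in the full splitting field $K_{n-1}$.

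The caveat you flag is exactly the gap relative to the lemma as stated: you have established only one direction of the biconditional over $K_{n-1}$. From $K(\alpha)\subseteq K_{n-1}$ your equivalence gives ``non-square in $K_{n-1}\Rightarrow$ non-square in $K(\alpha)\Rightarrow p_n$ irreducible,'' which is the direction actually invoked downstream in Theorems~\ref{irredcritgen} and~\ref{irredthm}. But the converse ``$p_n$ irreducible $\Rightarrow$ non-square in $K_{n-1}$'' does not follow from your argument: nothing in the tower prevents $\sqrt{\Disc}$ from lying in $K_{n-1}\setminus K(\alpha)$, i.e.\ the fiber quadratic could be irreducible over $K(\alpha)$ yet split in the Galois closure. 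The paper's orbit dichotomy is what is aimed at this missing direction, via the claim that if all fiber quadratics split over $K_{n-1}$ then the roots of $p_n$ fall into two $K$-orbits. So if you want the full if-and-only-if exactly as stated, you must supplement your tower argument with that global orbit analysis rather than stop at the stem field.
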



\begin{proof}
Denote the roots of $p_{n-1}$ by $\alpha_1, \ldots, \alpha_r$, and take $\alpha_1 = \alpha$.  Since $p_{n-1}$ is irreducible, the $\alpha_i$ are Galois conjugates, and hence the action of Galois on the roots of $p_n$ is either as a single orbit of $2r$ elements (and thus $p_n$ is irreducible) or two orbits of $r$ elements.  The latter case holds if and only if each orbit contains exactly one element in each fiber $\phi^{-1}(\alpha_i)$, or equivalently if and only if the roots of $p(x) - \alpha_i q(x)$ are not conjugate for any $i$.  This holds if and only if the $p(x) - \alpha_i q(x)$ are all reducible over $K_{n-1}$.  Because the $\alpha_i$ are all conjugate, this is equivalent to the reducibility of $p(x) - \alpha q(x)$ over $K_{n-1}$,  which occurs precisely when $\Disc \left(p(x) - \alpha q(x)\right)$ is a square in $K_{n-1}$.  The result now follows from Lemma~\ref{discprop}.  
\end{proof}

Recall that $G_n$ is the Galois group of the splitting field of the polynomials $p_n$.  To understand when $G_n$ is as large as possible, it will be necessary to have conditions under which the polynomials $p_n$ are irreducible.  We now give a criterion for the irreducibility of $p_n$ assuming that $p_{n-1}$ is irreducible and has even degree.   Note that the criterion here is sufficient but not necessary.  The result is useful in that it applies to all degree~$2$ rational maps, but  unfortunately the hypotheses are not  satisfied in the case of quadratic maps with a nontrivial automorphism.  We will need a refinement of this result in that case, which we provide in Theorem~\ref{irredthm}.

\begin{theorem} \label{irredcritgen}
Let $\phi(x) \in K(x)$ have degree~$2$.  Suppose that $n \geq 2$, and that $p_{n-1}$ is irreducible in $K[x]$ and has even degree.  Let $\ell(p_{n-1})$ be the leading coefficient of $p_{n-1}$, let $\gamma_1, \gamma_2 \in \mathbb{P}^1(\overline{K})$ be the critical points of $\phi$, and without loss say $\phi(\gamma_1) \neq \infty$.  If $\phi(\gamma_2)$ is not \textup(resp. is\textup) $\infty$, then $p_{n}$ is irreducible in $K[x]$ provided
\begin{equation} \label{tmbg}
p_{n-1}(\phi(\gamma_1)) \cdot p_{n-1}(\phi(\gamma_2))  \not\in K^{*2} \qquad ( \textrm{resp. } \ell(p_{n-1}) \cdot p_{n-1}(\phi(\gamma_1))  \not\in K^{*2}).
\end{equation}
\end{theorem}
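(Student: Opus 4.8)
The plan is to derive this from the irreducibility criterion of Lemma~\ref{irredlem} together with a norm computation: the condition ``not a square in $K_{n-1}$'' for the element appearing there should descend, because $\deg p_{n-1}$ is even, to the condition~\eqref{tmbg} over $K$. Fix a root $\alpha$ of $p_{n-1}$ and write $K_{n-1}$ for its splitting field --- equivalently the Galois closure of $K(\alpha)/K$. By Lemma~\ref{irredlem} it suffices to show that
\[
	D\;:=\;C\!\!\prod_{\phi(\gamma_i)\neq\infty}\bigl(\phi(\gamma_i)-\alpha\bigr)
\]
is not a square in $K_{n-1}^{*}$, where $C$ is the constant of Lemma~\ref{discprop}. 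First I would observe that $D\in K(\alpha)$. Since $\phi\in K(x)$, the set $\{\gamma_1,\gamma_2\}$ is stable under $\Gal(\overline K/K)$, hence so is $\{\phi(\gamma_1),\phi(\gamma_2)\}$. If $\phi(\gamma_2)\neq\infty$, then $\phi(\gamma_1)+\phi(\gamma_2)$ and $\phi(\gamma_1)\phi(\gamma_2)$ lie in $K$, so $(\phi(\gamma_1)-\alpha)(\phi(\gamma_2)-\alpha)$ is a polynomial in $\alpha$ over $K$; if $\phi(\gamma_2)=\infty$, then $\infty$ is a rational point, forcing $\phi(\gamma_1)$ to be Galois-fixed, so $\phi(\gamma_1)\in K$ and $D=C(\phi(\gamma_1)-\alpha)\in K(\alpha)$. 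In either case $D\in K(\alpha)$, with $C\in K^{*}$.

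Next I would compute $N:=N_{K(\alpha)/K}(D)$. Letting $\alpha_1,\dots,\alpha_m$ be the roots of $p_{n-1}$, with $m=\deg p_{n-1}$ even by hypothesis, the $K$-conjugates of $D$ are obtained by substituting $\alpha_j$ for $\alpha$, so $N=C^{m}\prod_{\phi(\gamma_i)\neq\infty}\prod_{j}(\phi(\gamma_i)-\alpha_j)=C^{m}\prod_{\phi(\gamma_i)\neq\infty}\bigl(p_{n-1}(\phi(\gamma_i))/\ell(p_{n-1})\bigr)$, using $p_{n-1}(x)=\ell(p_{n-1})\prod_j(x-\alpha_j)$. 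Because $m$ is even, $C^{m}$ and $\ell(p_{n-1})^{-2}$ are squares in $K^{*}$, while $\ell(p_{n-1})^{-1}\equiv\ell(p_{n-1})\pmod{K^{*2}}$; hence modulo $K^{*2}$ we get $N\equiv p_{n-1}(\phi(\gamma_1))\,p_{n-1}(\phi(\gamma_2))$ when $\phi(\gamma_2)\neq\infty$ and $N\equiv\ell(p_{n-1})\,p_{n-1}(\phi(\gamma_1))$ when $\phi(\gamma_2)=\infty$ --- exactly the quantities that~\eqref{tmbg} assumes are non-squares. (This is one place the even-degree hypothesis is used.) It therefore remains to prove the implication: if $N_{K(\alpha)/K}(D)\notin K^{*2}$, then $D\notin K_{n-1}^{*2}$.

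The contrapositive of this last implication is the main obstacle, and I would isolate it as a lemma of Verlagerung type: \emph{if $L/K$ is separable of even degree with Galois closure $M$, and $D\in L^{*}$ is a square in $M$, then $N_{L/K}(D)\in K^{*2}$}. Writing $D=\eta^{2}$ with $\eta\in M$ and setting $\Theta=\prod_i\tilde\sigma_i(\eta)$, where $\tilde\sigma_1,\dots,\tilde\sigma_m\in\Gal(M/K)$ extend the $K$-embeddings of $L$, one checks that $\Theta^{2}=N_{L/K}(D)\in K$, so it is enough to show $\Theta\in K$. A direct computation identifies the map $g\mapsto g(\Theta)/\Theta$ with the character $\chi\circ\operatorname{Ver}\colon\Gal(M/K)\to\{\pm1\}$, where $\operatorname{Ver}$ is the transfer to $\Gal(M/L)$ and $\chi(\tau)=\tau(\eta)/\eta$; the triviality of this character follows --- via the identification of $\chi\circ\operatorname{Ver}$ with the sign of $\Gal(M/K)$ acting on the cosets of $\ker\chi$ and a reduction to a Sylow $2$-subgroup --- from the fact that $\Gal(M/L)$ contains no nontrivial normal subgroup of $\Gal(M/K)$, which holds because $M$ is the Galois closure of $L/K$; here the even degree is again essential. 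Applying the lemma with $L=K(\alpha)$ and $M=K_{n-1}$ and feeding in the norm computation finishes the proof. I expect the verification that the transfer character vanishes to be the one genuinely delicate point, and I note that this argument breaks exactly when $p_{n-1}$ has odd degree, which is why the case of a non-trivial automorphism requires the separate treatment of Theorem~\ref{irredthm}.
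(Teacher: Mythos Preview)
Your overall strategy is the paper's: invoke Lemma~\ref{irredlem}, then descend the ``not a square in $K_{n-1}$'' condition to~$K$ via a norm computation. The paper does this in one line, taking the norm $N_{K_{n-1}/K}$ (which trivially sends squares to squares) and then writing down $C^{\deg p_{n-1}}\prod_{i,j}(\phi(\gamma_i)-\alpha_j)$. You have correctly noticed that this displayed expression is really $N_{K(\alpha)/K}(D)$, not $N_{K_{n-1}/K}(D)$; the two differ by the exponent $[K_{n-1}:K(\alpha)]$, which in this setting is a power of~$2$ and hence typically even, so the paper's argument as literally written is loose at exactly this point. Your transfer lemma is meant to close that gap, and the reduction $\chi\circ\operatorname{Ver}=\operatorname{sgn}_{G/\ker\chi}$ you cite is correct (it follows from $\det(\operatorname{Ind}_H^G\chi)=\operatorname{sgn}_{G/H}\cdot(\chi\circ\operatorname{Ver})$ together with $\operatorname{Ind}_H^G\chi=\operatorname{Ind}_N^G\mathbf{1}-\operatorname{Ind}_H^G\mathbf{1}$).

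However, your transfer lemma is \emph{false} in the generality you state it. Take $G=\mathbb{Z}/5\rtimes\mathbb{Z}/4$ (the Frobenius group $F_{20}$, with $\mathbb{Z}/4$ acting faithfully), and let $H=\langle h\rangle$ be generated by the unique involution in a chosen $\mathbb{Z}/4$. Then $H$ is core-free (the five involutions of $G$ are conjugate), $[G:H]=10$ is even, and for $g$ a generator of $\mathbb{Z}/4$ one computes that $g$ has exactly one orbit of size~$2$ on $G/H$, so $\operatorname{Ver}(g)=h$ and $\chi\circ\operatorname{Ver}$ is nontrivial. Translating back, there exist $K,L,M,D$ with $[L:K]=10$, $M$ the Galois closure, $D\in L^*\cap M^{*2}$, and $N_{L/K}(D)\notin K^{*2}$. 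So the lemma cannot be proved as you outline; in particular the ``reduction to a Sylow $2$-subgroup'' step does not go through for arbitrary~$G$.

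What saves the application is that here $G_{n-1}\subset\Aut(T_{n-1})$ is a $2$-group, so the Sylow reduction is vacuous and one must argue directly that $\operatorname{sgn}_{G/N}$ is trivial for $G$ a $2$-group with $H$ core-free and $N=\ker\chi\le H$ of index~$2$. Your sketch does not supply this, and it is the genuinely delicate point; you should either prove it in the $2$-group case or restructure the descent (for instance, as the paper does in Theorem~\ref{irredthm}, by norming only down to an intermediate field).
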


\begin{remark} The condition that $p_{n-1}$ have even degree is implied by $\phi^{n-1}(\infty) \neq 0$.  Moreover, if $\gamma_i \neq \infty$, then from~\eqref{rec1} and the assumption $n \geq 2$,  $p_{n-1}(\phi(\gamma_i)) = p_n(\gamma_i)$ up to squares.  Thus if both $\gamma_1$ and $\gamma_2$ are finite, then \eqref{tmbg} becomes $$p_{n}(\gamma_1)p_{n}(\gamma_2)  \not\in K^{*2} \qquad  ( \textrm{resp. } \ell(p_{n-1}) p_{n}(\gamma_1)  \not\in K^{*2}).$$
\end{remark}

\begin{proof}
By Lemma~\ref{irredlem}, we must show 
\[
	C \prod_{\phi(\gamma_i) \neq \infty} \left( \phi(\gamma_i) - \alpha \right) \not\in K_{n-1}^{*2},
\]
for some root $\alpha$ of $p_{n-1}$, and for this it is sufficient to show that the norm of the left side as an element of $K_{n-1}/K$ is not a square in $K$.   This norm equals
\[
	C^{\deg p_{n-1}} \prod_{i=1}^2 \prod _{j = 1}^r \left( \phi(\gamma_i) - \alpha_j \right) \qquad \left( \text{resp. } C^{\deg p_{n-1}} \prod _{j = 1}^r \left( \phi(\gamma_1) - \alpha_j \right) \right),
\]
where $\alpha_1, \ldots, \alpha_r$ denote the roots of $p_{n-1}$.  This is the same as
\[
	C^{\deg p_{n-1}} \ell(p_{n-1})^{-2} \prod_{i = 1}^2 p_{n-1}(\phi(\gamma_i)) \qquad \left( \text{resp. } C^{\deg p_{n-1}} \ell(p_{n-1})^{-1} p_{n-1}(\phi(\gamma_i)) \right).
\]
Since $\deg p_{n-1}$ is even, $C^{\deg p_{n-1}}$ is a square.   
\end{proof}

For the remainder of this section, we let $n$ be fixed, and assume that $\phi^n(\infty) \neq 0$ and $\phi^{n-1}(\infty) \neq 0$.  We also assume that $p_n$ is separable, which by Theorem~\ref{discthm} is equivalent to $\phi^i(\gamma) \neq 0$, $i = 1, \ldots, n$ for all critical points $\gamma$ of $\phi$.  Together, these assumptions imply that there are $d^n$ distinct roots $\alpha_1, \ldots, \alpha_{d^n}$ of $p_n$, and 
$d^{n-1}$ distinct roots $\beta_1, \ldots, \beta_{d^{n-1}}$ of $p_{n-1}$.  Moreover, the $\alpha_i$ are precisely the roots of $p(x) - \beta_j q(x), $ for $ j = 1, \ldots, d^{n-1}$.  Recall that 
$K_n = K(\alpha_1, \ldots, \alpha_{d^n})$ and $K_{n-1} = K(\beta_1, \ldots, \beta_{d^{n-1}})$.  

We examine the extension $K_n/K_{n-1}$ in the case $d = 2$ and give conditions that ensure it is as large as possible.
Recall that $G_n = \Gal(K_n/K)$.  The assumptions of the previous paragraph imply an injection $G_n \hookrightarrow \Aut(T_n)$, where $T_n$ is the complete binary rooted tree of height $n$.  Restriction gives a homomorphism 
$\Aut(T_n) \to \Aut(T_{n-1})$, whose kernel is generated by the transpositions swapping a single pair of vertices at level~$n$, both connected to a given vertex at level $n-1$.  Thus the kernel is isomorphic to $(\Z/2\Z)^{2^{n-1}}$.  Hence $\Gal(K_n/K_{n-1})$ must inject into this group.  We now show how one can see this directly from the way that $K_n$ is constructed from $K_{n-1}$;  this point of view will also be the most useful for establishing our maximality results.  

Because $\deg \phi = 2$, $p(x) - \beta_j q(x) \in K_{n-1}(x)$ is a quadratic polynomial.  Note that $K_n$ is obtained from $K_{n-1}$ by adjoining the roots of $p(x) - \beta_j q(x)$ for $j = 1, \ldots, 2^{n-1}$, so we have that $K_n$ is a $2$-Kummer extension of $K_{n-1}$, and indeed letting 
\begin{equation}
\delta_j = \Disc (p(x) - \beta_j q(x)),\label{deltajdef} \text{ we have }
K_n = K_{n-1}\left(\sqrt{\delta_j} : j = 1, \ldots, 2^{n-1}\right).
\end{equation}
It follows that $\Gal(K_n/K_{n-1}) \hookrightarrow (\Z/2\Z)^{2^{n-1}}.$  Using Kummer theory (e.g. \cite[Section VI.8]{langalg}), $[K_n : K_{n-1}]$ is the order of the group $D$ generated by the classes of the $\delta_j$ in 
$K_{n-1}^*/K_{n-1}^{*2}$.  Now,
\[
\#D = \frac{2^{2^{n-1}} }{ \#V}, 
\text{ where }
V = \{(e_1, \ldots, e_{2^{n-1}}) \in 
\mathbb{F}_2^{2^{n-1}} : \prod_j \delta_j^{e_j} \in K_{n-1}^{*2} \}.
\]
That is, $V$ is the group of relations among the $\delta_j$.  One sees easily that $V$ is an $\mathbb{F}_2$-vector space, and that the action of $G_{n-1} := \Gal(K_{n-1}/K)$ on the $\delta_j$ gives an action of $G_{n-1}$ on $V$ as linear transformations.  It follows that $V$ is an $\mathbb{F}_2[G_{n-1}]$-module.

The following Lemma is due to M. Stoll \cite{stoll}.  We give the proof here for the sake of completeness.

\begin{lemma}[Stoll] \label{stolllem}
Let $\Gamma$ be a $2$-group and $M \neq 0$ a $\mathbb{F}_2[\Gamma]$-module.  Then the submodule $M^{\Gamma}$ of $\Gamma$-invariant elements is non-trivial.
\end{lemma}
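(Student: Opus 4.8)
The plan is to deduce the Lemma from the standard counting fact that a finite $2$-group $\Gamma$ acting on a finite set $S$ satisfies $|S| \equiv |S^{\Gamma}| \pmod 2$: one partitions $S$ into orbits, notes that each orbit has size $[\Gamma : \mathrm{Stab}]$, which is a power of $2$ by Lagrange's theorem, and observes that this power equals $1$ exactly at the fixed points. So the first move is to reduce to the situation of a finite $2$-group acting $\mathbb{F}_2$-linearly on a finite-dimensional $\mathbb{F}_2$-vector space.

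To carry out that reduction, I would fix a nonzero $v \in M$ and pass to the $\mathbb{F}_2[\Gamma]$-submodule $N = \mathbb{F}_2[\Gamma]\,v$ that it generates; since $N^{\Gamma} \subseteq M^{\Gamma}$, it suffices to find a nonzero element of $N^{\Gamma}$. In the setting where the Lemma is applied (and whenever $\Gamma$ is finite) $N$ is spanned by the finite set $\{\gamma v : \gamma \in \Gamma\}$, hence is a finite set. The $\Gamma$-action on $N$ factors through the finite image $\bar\Gamma \subseteq \mathrm{Aut}_{\mathbb{F}_2}(N)$, and $\bar\Gamma$, being a finite group in which every element has $2$-power order, is a finite $2$-group; moreover $N^{\bar\Gamma} = N^{\Gamma}$. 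Thus we are reduced to the case $\Gamma = \bar\Gamma$ a finite $2$-group acting $\mathbb{F}_2$-linearly on the finite, nonzero space $N$.

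Finally I would apply the counting fact: $|N| = 2^{\dim_{\mathbb{F}_2} N}$ is even because $N \neq 0$, so $|N^{\Gamma}|$ is even as well; since $0 \in N^{\Gamma}$ there must be a further, necessarily nonzero, $\Gamma$-invariant element, and it lies in $M^{\Gamma}$. I do not anticipate any genuine difficulty here — the single point deserving a careful word is the reduction to a finite-dimensional submodule and a finite quotient of $\Gamma$, which is exactly where finiteness of the groups involved (in our application $\Gamma$ arises as a quotient of the finite group $G_{n-1}$) gets used.
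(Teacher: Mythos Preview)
Your proof is correct but takes a different route from the paper's. The paper argues by induction on $\#\Gamma$: for $\Gamma = \{e,\sigma\}$ one observes that for any nonzero $m \in M$ either $\sigma(m)=m$ or $m+\sigma(m)$ is a nonzero invariant (using that the coefficients are in $\mathbb{F}_2$); for larger $\Gamma$ one picks a nontrivial normal subgroup $N$, gets $M^N \neq 0$ by induction, and then applies induction again to the $\mathbb{F}_2[\Gamma/N]$-module $M^N$. Your argument instead passes to the finite cyclic submodule $N=\mathbb{F}_2[\Gamma]v$ and uses the class-equation congruence $|N|\equiv |N^\Gamma|\pmod 2$. Both are standard and equally short; the paper's inductive approach has the mild advantage of handling arbitrary (possibly infinite) $M$ directly, without the preliminary reduction to a finite-dimensional submodule, while your orbit-counting argument avoids any appeal to the structure of $2$-groups beyond Lagrange's theorem.
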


\begin{proof}
Induct on $\# \Gamma$.  Suppose $\Gamma = \{e, \sigma\} \cong \Z/2\Z$, and take $m \in M$ with $m \neq 0$.  Then either $\sigma(m) = m$ or $m + \sigma(m) \neq 0$ (since $M$ is an $\mathbb{F}_2$-module).  In the former case, $m$ is a nontrivial element of $M^{\Gamma}$, while in the latter case 
$m + \sigma(m)$ is a nontrivial element of $M^{\Gamma}$.

If $\# \Gamma > 2$, then let $N$ be a nontrivial normal subgroup of $\Gamma$ (possible since $\Gamma$ is a 2-group).  Then $M$ is an $\mathbb{F}_2[N]$-module also, so by induction $M^N \neq 0$.  However, $M^N$ is an $\mathbb{F}_2[\Gamma/N]$-module, so again by induction 
$0 \neq (M^N)^{\Gamma/N} = M^{\Gamma}$.  
\end{proof}

We now give a condition that will guarantee the extension $[K_n : K_{n-1}] $ is as large as possible.  \emph{A priori}, this result depends on deciding whether an element  of  $K_{n-1}$ is a square.  However,  we can actually give a condition ensuring $[K_n : K_{n-1}] = 2^{2^{n-1}}$ solely in terms of the arithmetic of $K$.   We provide such a condition in Corollary~\ref{maxcor2}. 

\begin{theorem} \label{maxthm1}
Let $\phi \in K(x)$ have degree~$2$ with $\phi^n(\infty) \neq 0$ and $\phi^{n-1}(\infty) \neq 0$.  Suppose that $n \geq 2$ and that $p_{n-1}$ is irreducible in $K[x]$.  Let  $\ell(p_{n-1})$ be the leading coefficient of $p_{n-1}$, let  $\gamma_1, \gamma_2 \in \mathbb{P}^1(\overline{K})$ be the critical points of $\phi$, and without loss say $\phi(\gamma_1) \neq \infty$.  If $\phi(\gamma_2)$ is not \textup(resp. is\textup) $\infty$, then 
$[K_n : K_{n-1}] = 2^{2^{n-1}}$ if and only if 
\begin{equation} \label{seccond}
p_{n-1}(\phi(\gamma_1))  p_{n-1}(\phi(\gamma_2))  \not\in K_{n-1}^{*2} \qquad ( \textrm{resp. } \ell(p_{n-1})  p_{n-1}(\phi(\gamma_1))  \not\in K_{n-1}^{*2}).
\end{equation}
\end{theorem}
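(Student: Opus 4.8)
The plan is to use the Kummer-theoretic setup established immediately before the statement: $K_n = K_{n-1}(\sqrt{\delta_j}\colon j=1,\dots,2^{n-1})$ with $\delta_j = \Disc(p(x)-\beta_j q(x))$ for $\beta_1,\dots,\beta_{2^{n-1}}$ the roots of $p_{n-1}$, so that $[K_n:K_{n-1}] = 2^{2^{n-1}}$ precisely when the relation module $V\subseteq\mathbb{F}_2^{2^{n-1}}$ is trivial. Everything therefore reduces to showing that $V=0$ if and only if \eqref{seccond} holds.

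The first step is to pin down the $G_{n-1}$-module structure of $V$. Since $G_{n-1}=\Gal(K_{n-1}/K)$ embeds in $\Aut(T_{n-1})$ via its action on the preimage tree, it is a $2$-group; and since $p_{n-1}$ is irreducible, $G_{n-1}$ permutes the $\beta_j$ --- hence permutes the $\delta_j$, hence the coordinates of $\mathbb{F}_2^{2^{n-1}}$ on which $V$ sits as a submodule --- transitively. Now invoke Stoll's Lemma~\ref{stolllem}: if $V\ne0$ then $V^{G_{n-1}}\ne0$, and in a transitive permutation module over $\mathbb{F}_2$ the only nonzero fixed vector is the all-ones vector $\mathbf{1}$; conversely $\mathbf{1}\in V$ forces $V\ne0$. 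Hence $[K_n:K_{n-1}]=2^{2^{n-1}}$ if and only if $\mathbf{1}\notin V$, i.e.\ if and only if $\prod_j\delta_j\notin K_{n-1}^{*2}$.

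It remains to compute $\prod_j\delta_j$ modulo squares. By Lemma~\ref{discprop} there is a fixed $C\in K$ with $\delta_j = C\prod_{\phi(\gamma_i)\ne\infty}(\phi(\gamma_i)-\beta_j)$, and since $p_{n-1}(t)=\ell(p_{n-1})\prod_j(t-\beta_j)$ we have $\prod_j(\phi(\gamma_i)-\beta_j)=\ell(p_{n-1})^{-1}p_{n-1}(\phi(\gamma_i))$. Using that $2^{n-1}$ is even (so $C^{2^{n-1}}$ is a square), the product collapses to $\ell(p_{n-1})^{-2}\,p_{n-1}(\phi(\gamma_1))\,p_{n-1}(\phi(\gamma_2))$ when $\phi(\gamma_2)\ne\infty$ --- which is $p_{n-1}(\phi(\gamma_1))\,p_{n-1}(\phi(\gamma_2))$ up to squares --- and to $\ell(p_{n-1})^{-1}\,p_{n-1}(\phi(\gamma_1))$ when $\phi(\gamma_2)=\infty$ --- which is $\ell(p_{n-1})\,p_{n-1}(\phi(\gamma_1))$ up to squares, the surviving factor of $\ell(p_{n-1})$ coming from the fact that only one critical value now contributes. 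Substituting into the criterion from the previous paragraph yields exactly \eqref{seccond}.

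The main thing to be careful about is the square-class bookkeeping that determines which power of $\ell(p_{n-1})$ survives: the parity asymmetry between one and two contributing critical values is exactly what produces the extra $\ell(p_{n-1})$ in the parenthetical case, and the hypothesis $n\ge2$ is what makes $C^{2^{n-1}}$ a square. One should also confirm, from the standing separability assumptions, that each $\delta_j\ne0$ (otherwise $\beta_j=\phi(\gamma_i)$ for some $i$, forcing $\phi^n(\gamma_i)=0$, contradicting the separability of $p_n$) and that each $\beta_j\ne0$ (impossible since $p_{n-1}$ is irreducible of degree $2^{n-1}\ge2$), so that the Kummer-theoretic framework genuinely applies.
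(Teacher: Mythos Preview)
Your proof is correct and follows essentially the same approach as the paper's: reduce to $V=0\iff\mathbf{1}\notin V$ via Stoll's Lemma and transitivity, then compute $\prod_j\delta_j$ modulo squares using Lemma~\ref{discprop}. You are slightly more explicit than the paper in a couple of places (noting why $G_{n-1}$ is a $2$-group, why $C^{2^{n-1}}$ is a square when $n\ge2$, and why each $\delta_j\ne0$), but the argument is the same.
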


\begin{remark}  As in Theorem \ref{irredcritgen}, if both $\gamma_1$ and $\gamma_2$ are finite, then it follows from~\eqref{rec1} and $n \geq 2$ that \eqref{seccond} may be replaced by \begin{equation}\label{eqn:element of K}
p_{n}(\gamma_1)p_{n}(\gamma_2)  \not\in K_{n-1}^{*2} \qquad  ( \textrm{resp. } \ell(p_{n-1}) p_{n}(\gamma_1)  \not\in K_{n-1}^{*2}).
\end{equation}
\end{remark}

\begin{proof}
From the discussion immediately preceding Lemma~\ref{stolllem}, we have 

\[
[K_n : K_{n-1}] = 2^{2^{n-1}}/ \#V, \text{ where }
V = \{(e_1, \ldots, e_{2^{n-1}}) \in \mathbb{F}_2^{2^{n-1}} : \prod_j \delta_j^{e_j} \in K_{n-1}^{*2} \}
\] has a natural structure of a $\mathbb{F}_2[G_{n-1}]$-module.  Thus $[K_n : K_{n-1}] < 2^{2^{n-1}}$ if and only if $V \neq 0$, which by Lemma~\ref{stolllem} occurs if and only if $V^{G_{n-1}} \neq 0$.  However, since $p_{n-1}$ is irreducible, $G_{n-1}$ acts transitively on the $\delta_j$ defined in~\eqref{deltajdef}, implying that the only possible nontrivial element in $V^{G_{n-1}}$ is $(1, 1, \ldots, 1)$.  Hence
$[K_n : K_{n-1}] < 2^{2^{n-1}}$ if and only if 
$$\prod_{j=1}^{2^{n-1}} \Disc (p(x) - \beta_j q(x)) \in K_{n-1}^{*2},$$
where as before the $\beta_j$ are the $2^{n-1}$ distinct roots of $p_{n-1}$.
By Lemma~\ref{discprop}, this is equivalent to 
\[
	 \prod_{i=1}^2 \prod _{j = 1}^{2^{n-1}} \left( \phi(\gamma_i) - \beta_j \right) \in K_{n-1}^{*2} \qquad \left( \text{resp. }  \prod _{j = 1}^{2^{n-1}} \left( \phi(\gamma_1) - \beta_j \right) \in K_{n-1}^{*2} \right).
\]
The theorem now follows from the fact that $p_{n-1}(x) = \ell(p_{n-1}) \prod (x - \beta_j)$.
\end{proof}

Let $s=2$ if  $\phi(\gamma_2) \neq \infty$ and $s=1$ otherwise.  Then equation~\eqref{eqn:element of K} can be summarized as
\[
 \ell(p_{n-1})^{s} \prod_{i=1}^s p_{n}(\gamma_i)  \not\in K_{n-1}^{*2}.
 \]
Since $\ell(p_{n-1})^{s} \prod_{i=1}^s p_{n}(\gamma_i)$  is not just an element of $K_{n-1}$ but also an element of $K$,  it cannot be a square in $K_{n-1}$ unless all the primes of $K_{n-1}$ that divide it lie over primes of $K$ that ramify in $K_{n-1}$.  Thanks to Theorem~\ref{discthm}, we have a handle on the primes that can ramify in $K_{n-1}$, and thus we can use Theorem~\ref{maxthm1} to give a simpler condition ensuring that $[K_n : K_{n-1}] = 2^{2^{n-1}}$.
In the Corollary, we limit ourselves for simplicity of statement to the case when $\infty$ is distinct from the critical points and values of $\phi$, and does not have $0$ in its forward orbit.  Denote by $v_{\p}$ the $\p$-adic valuation at a prime $\p$ in the ring of integers of $K$.  

\begin{corollary} \label{maxcor2}
Let $\phi = p(x)/q(x) \in K(x)$ have degree~$2$, let $c = qp'-pq'$, let $\ell(p_{n-1})$ be the leading coefficient of $p_{n-1}$, and suppose that $\phi^n(\infty) \neq 0$ for all $n  \geq 1$ and that $\phi$ has two finite critical points $\gamma_1, \gamma_2$ with $\phi(\gamma_i) \neq \infty$ for each $i$.  Suppose further that there exists a prime $\p$ of $K$ with $v_{\p}(p_{n}(\gamma_1)p_{n}(\gamma_2))$ odd 
and 
\begin{equation} \label{noram}
0 = v_{\p} (\ell(p)) = v_\p (\ell(c)) = v_\p (\Res (q,p)) =  v_\p(\Disc\ p) = v_\p (p_j(\gamma_i)) 
\end{equation}
for $1 \leq i \leq 2, 2 \leq j \leq n-1$.
Then $[K_n : K_{n-1}] = 2^{2^{n-1}}$.
\end{corollary}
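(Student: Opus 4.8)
The plan is to deduce Corollary~\ref{maxcor2} from Theorem~\ref{maxthm1} by showing that the hypothesis~\eqref{seccond} (in the form~\eqref{eqn:element of K}, valid since both $\gamma_i$ are finite) is automatically satisfied under the assumption that there is a prime $\p$ with $v_\p(p_n(\gamma_1)p_n(\gamma_2))$ odd together with the non-ramification conditions~\eqref{noram}. So the goal reduces to: \emph{the element $\ell(p_{n-1})^2 p_n(\gamma_1)p_n(\gamma_2) \in K^*$ is not a square in $K_{n-1}^*$.} First I would note that $\ell(p_{n-1})^2$ is a square, so it suffices to show $x := p_n(\gamma_1)p_n(\gamma_2)$ is not a square in $K_{n-1}$. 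This is an element of $K$ itself. The standard principle is that an element of $K$ which is not a square in $K$ can fail to become a square only after ramification: if $x \in K^*$ is a square in $K_{n-1}$, then for every prime $\P$ of $K_{n-1}$ lying over a prime $\p$ of $K$ with $v_\p(x)$ odd, $\P$ must be ramified over $\p$ (since $v_\P(x) = e(\P/\p) v_\p(x)$ must be even, forcing $e(\P/\p)$ even, hence $\p$ ramifies in $K_{n-1}$). Thus it is enough to exhibit a prime $\p$ of $K$ with $v_\p(x)$ odd that is \emph{unramified} in $K_{n-1}$.

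The prime $\p$ from the hypothesis has $v_\p(x)$ odd by assumption; the remaining work is to verify that this $\p$ is unramified in $K_{n-1}$. Here I would invoke Theorem~\ref{discthm}: the primes ramifying in $K_{n-1} = K(T_{n-1})$ are exactly those dividing $\Disc\, p_{n-1}$ (more precisely, $K_{n-1}$ is the splitting field of $p_{n-1}$, so ramified primes divide $\Disc\, p_{n-1}$, after discarding the finitely many archimedean and residue-characteristic subtleties — but since we only need divisibility by a single prime $\p$, this is clean). Applying formula~\eqref{discform1} recursively from level $n-1$ down to level $1$, one sees that $\Disc\, p_{n-1}$ is, up to sign, a product of powers of $\ell(p_j)$ for $j \le n-1$, powers of $\ell(q)$, powers of $\ell(c)$, powers of $\Res(q,p)$, powers of $\Disc\, p$ (the base case $\Disc\, p_1 = \Disc\, p$), and the terms $\prod_{c(\gamma)=0} p_j(\gamma) = p_j(\gamma_1)p_j(\gamma_2)$ for $2 \le j \le n-1$. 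Under the hypothesis that $\infty$ is distinct from the critical points and critical values, we have $d_c = 2d-2 = 2$, which (as noted in the remark after Theorem~\ref{discthm}) kills the $\ell(p_n)$ contribution, so in fact only $\ell(p)$ — not the higher $\ell(p_j)$ — appears, and this is precisely why~\eqref{noram} lists $v_\p(\ell(p))$ rather than $v_\p(\ell(p_j))$ for all $j$. The conditions~\eqref{noram} say exactly that $v_\p$ vanishes on every one of these factors for $2 \le j \le n-1$, $1 \le i \le 2$, so $v_\p(\Disc\, p_{n-1}) = 0$, i.e.\ $\p \nmid \Disc\, p_{n-1}$, hence $\p$ is unramified in $K_{n-1}$.

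Putting the pieces together: $\p$ is unramified in $K_{n-1}$ yet $v_\p(p_n(\gamma_1)p_n(\gamma_2))$ is odd, so $p_n(\gamma_1)p_n(\gamma_2)$ is not a square in $K_{n-1}^*$, hence neither is $\ell(p_{n-1})^2 p_n(\gamma_1) p_n(\gamma_2)$; but wait — I should double-check that we may even apply Theorem~\ref{maxthm1}, which requires $p_{n-1}$ irreducible and $\phi^{n-1}(\infty) \neq 0$, $\phi^n(\infty) \neq 0$. The latter two are part of the hypothesis ($\phi^n(\infty) \neq 0$ for all $n$). For irreducibility of $p_{n-1}$: this is \emph{not} listed as an explicit hypothesis of the corollary, so I would need to either (a) observe it follows inductively from the same primitive-divisor input via Theorem~\ref{irredcritgen} — indeed the odd-valuation hypothesis at level $n$ combined with~\eqref{noram} gives, by the same norm/ramification argument applied with $K$ in place of $K_{n-1}$, that $p_{n-1}(\phi(\gamma_1))p_{n-1}(\phi(\gamma_2)) = p_n(\gamma_1)p_n(\gamma_2)$ (up to squares, by the remark after Theorem~\ref{irredcritgen}) is not a square in $K^*$, which is precisely the hypothesis of Theorem~\ref{irredcritgen} forcing $p_n$ irreducible given $p_{n-1}$ irreducible — or (b) treat ``$p_{n-1}$ irreducible'' as a standing assumption carried over from the ``for the remainder of this section'' paragraph (which assumes $p_n$ separable but not irreducible, so option (a) is the honest route, and one also needs $p_1 = p$ irreducible as a base case, or to phrase the conclusion conditionally). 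The step I expect to be the main obstacle is precisely this bookkeeping around irreducibility and the careful extraction, from the recursive formula~\eqref{discform1}, of the \emph{exact} list of prime-divisor contributions to $\Disc\, p_{n-1}$ — making sure that the five conditions in~\eqref{noram} are genuinely sufficient to force $v_\p(\Disc\, p_{n-1}) = 0$, including correctly handling the leading-coefficient terms $\ell(p_j)$ using $d_c = 2$, and confirming that the base of the induction ($\Disc\, p_1$) is covered by the $v_\p(\Disc\, p) = 0$ condition. The actual algebra once the framework is set up is routine.
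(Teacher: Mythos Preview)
Your approach is essentially the paper's: use the recursive discriminant formula to get $v_\p(\Disc\, p_{n-1})=0$, conclude $\p$ is unramified in $K_{n-1}$, hence $p_n(\gamma_1)p_n(\gamma_2)\notin K_{n-1}^{*2}$, and apply Theorem~\ref{maxthm1}.

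Two small corrections. First, the paper explicitly splits into the cases $\phi(\infty)=\infty$ (using \eqref{discform2}) and $\phi(\infty)\neq\infty$ (using \eqref{discform1}, where the hypotheses force $k_1=k_2=0$); you only invoke \eqref{discform1}. Relatedly, your remark that ``only $\ell(p)$ --- not the higher $\ell(p_j)$ --- appears'' is slightly off: when $k_1=0$ in \eqref{discform1}, \emph{no} $\ell(p_j)$ term survives at all in the recursion; the condition $v_\p(\ell(p))=0$ in \eqref{noram} is there to handle the \eqref{discform2} case.

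Second, on irreducibility of $p_{n-1}$: you are right that it is needed for Theorem~\ref{maxthm1} and is not stated in the corollary, but the paper's own proof simply invokes Theorem~\ref{maxthm1} without comment, so it is treated as an implicit hypothesis. Your option (a) does not quite close the gap as you describe it --- it yields $p_n$ irreducible \emph{given} $p_{n-1}$ is, not $p_{n-1}$ itself. In the actual application (Theorem~\ref{nonpoly}) the corollary is applied at every level $j\le n$, and irreducibility propagates inductively since $[K_j:K_{j-1}]=2^{2^{j-1}}$ for all $j$ forces $G_j\cong\Aut(T_j)$, which acts transitively.
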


\begin{proof}
If $\phi(\infty) = \infty$, the conditions in~\eqref{noram}, along with~\eqref{discform2} and induction, imply $v_\p(\Disc (p_{n-1})) = 0$.  If $\phi(\infty) \neq \infty$, then the conditions in \eqref{noram} and \eqref{discform1} give the same conclusion (note that $\infty$ not a critical point and $\phi(\infty) \neq 0$ imply that $k_1 = k_2 = 0$ in \eqref{discform1}).  Hence $\p$ does not ramify in $K_{n-1}$.  Therefore there is a prime $\P$ in the ring of integers of $K_{n-1}$ with $v_{\P}(\p)$ odd, and it follows that 
$v_\P(p_{n}(\gamma_1)p_{n}(\gamma_2))$ 
is odd, so $p_{n}(\gamma_1)p_{n}(\gamma_2)$ 
cannot be a square in $K_{n-1}^*$.  The corollary now follows from Theorem~\ref{maxthm1} and the remark preceding it.
\end{proof}

Corollary \ref{maxcor2} provides a convenient method for checking the maximality of $G_n$ for an arbitrary quadratic $\phi$, at least for small $n$. In certain circumstances, it can even be used to determine $G_\infty$, although the difficulties in disentangling possible interactions of the two critical orbits at various primes are considerable. We illustrate with the family of quadratic rational functions
$$
\phi_a(x) = \frac{1 + ax + (3+a) x^2}{1 - (4+a)x - (a+1)x^2}, \qquad a \in \Q \smallsetminus \{-2\}.
$$
The critical points of $\phi_a$ are $1$ and $-1/3$; in addition,  $\phi_a$ has the two-cycle $1 \mapsto -1 \mapsto 1$, and $\phi_a$ sends $0$ to $1$. The behavior of one critical orbit is thus quite simple, and the fact that $0$ is preperiodic ensures that elements of the other critical orbit are close to relatively prime. Define the polynomials $p$ and $q$ by $\phi = p/q$; then 
\[
 \Res (q,p) = 16(a+2)^2.
\]
 For $k \geq 2$, we have the recursion
\begin{equation} \label{recc}
p_k = q_{k-1}^2 + aq_{k-1}p_{k-1} + (3 + a)p_{k-1}^2, \qquad q_k = q_{k-1}^2 - (4+a)q_{k-1}p_{k-1} - (a+1)p_{k-1}^2.
\end{equation}
Note that the only primes $\ell$ where we might have 
\[
p_k \equiv 0 \pmod \ell
\quad \text{ and } \quad
q_k \equiv 0 \pmod \ell
\]
 are those dividing $2(a+2)$. The reason is that if $\ell \nmid \Res(q,p)$, then $\phi$ has \textit{good reduction} at $\ell$, and thus so do all iterates of $\phi$ \cite[Theorem 2.18]{jhsdynam}, implying that $\ell \nmid \Res(p_k, q_k)$. We remark that one can also apply Lemma~\ref{disclem} to the polynomials $p_k$, $q_k$, and $p_kq_k$ to obtain an exact formula for $\Res(p_k, q_k)$, which turns out to be a power of $\Res(p,q)$. 

Now let $t \in \Q$, and suppose that for some $k \geq 1$, we have $p_k(t) = q_k(t)$. An  induction shows that 
 \begin{align*}
 p_{k+i}(t) &= (4 + 2a)(p_{k+i-1}(t))^2
&& \text{ if }  i  \text{ is odd,
  and }\\
  p_{k+i}(t) &= 4(p_{k+i-1}(t))^2   &&\text{ if }  i  \text{ is even.}
  \end{align*}
It follows that there are positive integers $r_i, s_i$ with 
\begin{equation} \label{oneco}
p_{k+i}(t) = p_k(t)^{2^i}2^{r_i}(2+a)^{s_i},
\end{equation}
where $r_i \equiv s_i \bmod{2}$.

 Suppose  that $\ell \nmid 2(a+2)$ satisfies $\ell \mid p_k(t)$ for some $k \geq 1$, and take $k$ minimal with this property.  Then since $\ell \nmid q_k(t)$,
 \[
 p_{k+1}(t) \equiv q_k(t)^2 \not\equiv 0 \pmod{\ell}
\quad \text{ and } \quad
 q_{k+1}(t) \equiv p_{k+1}(t) \pmod{\ell}.
 \]
  It then follows from applying \eqref{oneco} with $k+1$ in place of $k$ that $\ell \nmid p_{k+i}(t)$ for all $i \geq 1$.

\begin{proof}[Proof of Thoerem~$\ref{nonpoly}$]
Consider the specialization $a=0$, so 
\[
\phi(x) = \frac{1 + 3 x^2}{1 - 4x - x^2}.
\]
 By the above analysis, any odd prime divides at most one term of the sequence
$p_n(-1/3)$. Moreover, $p_1(1) = 4$ and $p_2(1) = q_2(1) = 2^6$, and thus it follows from \eqref{oneco} that $p_n(1)$ is an even power of $2$ for all $n \geq 1$.  

We also note that $(p_n(-1/3), q_n(-1/3)) \in (\Z/5\Z)^2$ for $n \geq 1$ gives the orbit
$$(3,0) \mapsto (2,1) \mapsto (3,4) \mapsto (3,4) \mapsto \cdots$$
and thus neither of $\pm p_n(-1/3)$ is a square for all $n \geq 1$. Hence for each $n$ there is a prime at which $p_n(-1/3)$ --- and therefore $p_n(-1/3)p_n(1)$ --- has odd valuation. To apply Corollary \ref{maxcor2}, we need to show that for each $n$, this prime is not $2$ or $3$, since the leading coefficient of $c$ and $\Disc\ p$ are both $-12$.  Note, however, that  that we don't need to consider $n =1$, since clearly $[K_1:K] = 2$.

 Consider first the $3$-adic behavior of $p_n(-1/3)$. We have $p(-1/3) = 4/3$ and 
$q(-1/3) = 20/9$. From \eqref{recc}, we see that for $k \geq 2$, 
\[
-1 \geq v_3(p_{k-1}(-1/3)) > v_3(q_{k-1}(-1/3)),
\] 
which implies that 
\[
v_3(p_{k}(-1/3)) = 2v_3(q_{k-1}(-1/3)) = v_3(q_{k}(-1/3)).
\]
 Hence $v_3(p_{n}(-1/3))$ is even for all $n \geq 2$.  
 
 Turning now to the $2$-adic perspective, suppose that for some $k \geq 2$, 
 \[
 1 \leq e = v_2(p_{k-1}(-1/3)) = v_2(q_{k-1}(-1/3)),\]
  and write 
  \[
  p_{k-1}(-1/3) = 2^eu \text{ and }q_{k-1}(-1/3) = 2^ew, \text{ where } v_2(u) = v_2(w) = 0.
  \]
   We then have
$$p_{k} \equiv 2^{2e}(u^2 + 3v^2) \bmod{2^{2e + 3}} \equiv 2^{2e+2} \bmod{2^{2e + 3}},$$
and similarly for $q_{k}$. It follows that $v_2(p_{k}(-1/3)) = v_2(q_{k}(-1/3)) = 2e+2$, and since 
$v_2(p(-1/3)) = v_2(q(-1/3)) = 2$, we thus have that  
$v_2(p_n(-1/3))$ is even for all $n \geq 1$. 

Finally, we must show that $\phi_0^n(\infty) \neq 0$ for all $n \geq 1$. But $\phi_0(\infty) \equiv 0 \bmod{3}$, implying that $\infty$ maps modulo 3 into the 2-cycle $1 \mapsto -1 \mapsto 1$.  We have thus shown that when $a = 0$, $G_\infty \cong \Aut(T)$. 
\end{proof}




\section{Discriminants, Irreducibility, and Galois Theory of quadratic rational functions with an order-2 automorphism} \label{galaut}

In this section, we consider the setting of Conjecture~\ref{mainconj}, namely $\phi(x) = k(x^2 + b)/x$.  In the interest of describing exactly the arboreal Galois representation associated to such a map, we choose not to take $b = 1$, since doing so implies that conjugation by $x \to x/\sqrt{b}$ is defined over $K$, introducing a possible additional quadratic extension. Note that $\phi^n(\infty) \neq 0$ and  $\phi^{n-1}(\infty) \neq 0$.  Let $\iota(x) = -x$, and note that $\iota$ acts on the roots of $p_n$ without fixed points, since $0$ and $\infty$ are the only fixed points of $\iota$ and neither maps to $0$ under any iterate of $\phi$.

We wish to apply the same general program from Section~\ref{galgen} to this case.  However, Theorem~\ref{irredcritgen} and Theorem~\ref{maxthm1} do not apply, since the critical points satisfy $\gamma_1 = -\gamma_2$ and~$p_n$ is always an even function.  Hence
$p_{n}(\gamma_1)p_{n}(\gamma_2) = p_n(\gamma_1)^2$ is a square in $K_{n-1}^{*}$ for all~$n$.  Indeed, we will show that $[K_n : K_{n-1}] \neq 2^{2^{n-1}}$ for all $n \geq 2$.  
As in Section~\ref{galgen}, we have $G_n \hookrightarrow \Aut(T_n)$, and $T_n$ is the complete binary rooted tree of height $n$, provided that $p_n$ is separable.  However, now the image of $G_n$ must commute with the action of $\iota$ on $T_n$.  We thus have $G_n \subseteq C_n,$ where $C_n$ denotes the centralizer in $\Aut(T_n)$ of the element corresponding to the action of $\iota$.  As in Section \ref{intro}, $C_{\infty}:= \invlim C_n$ plays roughly the role of a Cartan subgroup in the theory of Galois representations attached to elliptic curves with complex multiplication.  We begin by describing the structure of $C_n$ in purely group-theoretic terms, then we proceed to give discriminant, irreducibility, and Galois-maximality results for maps of the form $k(x^2 + b)/x$.

By slight abuse of notation, we write $\iota$ for the action induced by $\iota$ on $T_n$.  Because $\iota$ acts on $T_n$ without fixed points, its action on $T_1$ is non-trivial.  
Note that for any $j<n$ there is a natural epimorphisms $C_n \to C_j$ obtained by restriction.  For a vertex $v \in T_{n}$, we define the height of $v$ to be $\min_i \{ v \in T_i \}$.

\begin{proposition} \label{autprop}
Let $\iota \in \Aut(T_n)$ be any involution whose restriction to $\Aut(T_1)$ is non-trivial.  Let $C_j$ be the centralizer in $\Aut(T_j)$ of $\iota$ restricted to $T_j$, and let $T_a$ be a subtree of $T_n$ rooted at a height-one vertex of $T_1$.  Then the map
$$h: \ker \left( C_n \to C_{1} \right) \longrightarrow \Aut(T_a)$$
given by $h(\tau) = \tau|_{T_a}$ is an isomorphism.  
\end{proposition}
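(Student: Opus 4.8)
The plan is to build an explicit inverse to $h$ by using the commutation relation with $\iota$ to reconstruct a kernel element from its restriction to one half of the tree. First I would record the basic geometry: since $\iota$ fixes the root of $T_n$ and acts non-trivially on $T_1$, it must swap the two height-one vertices $v_1, v_2$ of $T_1$. Writing $T_a$ for the subtree rooted at $v_1$ and $T_b$ for the one rooted at $v_2$ --- each a complete binary tree of height $n-1$ --- the automorphism $\iota$ restricts to an isomorphism $\iota_0\colon T_a \to T_b$, and $\iota|_{T_b} = \iota_0^{-1}$ since $\iota^2 = \mathrm{id}$. Any $\tau \in \ker(C_n \to C_1)$ fixes the root, $v_1$, and $v_2$, hence stabilizes both $T_a$ and $T_b$, so $h$ really does take values in $\Aut(T_a)$; and $h$ is a homomorphism because restriction to a stabilized subtree is multiplicative.

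Next I would prove injectivity: the identity $\tau\iota = \iota\tau$, evaluated on $v \in T_a$, gives $\tau|_{T_b} = \iota_0 \circ (\tau|_{T_a}) \circ \iota_0^{-1}$, so the behavior of $\tau$ on $T_b$ is forced by its behavior on $T_a$; combined with $\tau|_{T_1} = \mathrm{id}$, this shows $\ker h$ is trivial. For surjectivity, given $\sigma \in \Aut(T_a)$, I would set $\tau$ equal to the identity on the root, to $\sigma$ on $T_a$, and to $\iota_0 \circ \sigma \circ \iota_0^{-1}$ on $T_b$; this is manifestly a tree automorphism of $T_n$ that is the identity on $T_1$ and satisfies $h(\tau) = \sigma$, provided one checks $\tau \in C_n$.

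The hard part --- really the only non-formal point --- is confirming that this glued-together $\tau$ commutes with $\iota$ on all of $T_n$, not merely with $\iota_0$. This reduces to a three-case check: on the root both composites act trivially; for $v \in T_a$ both $\tau\iota$ and $\iota\tau$ send $v \mapsto \iota_0(\sigma v)$; and for $w \in T_b$ both send $w \mapsto \sigma(\iota_0^{-1} w)$, each identity using only that $\iota$ interchanges $T_a$ and $T_b$ and is an involution. Once this is in hand, $\tau \in \ker(C_n \to C_1)$, so $h$ is a bijective homomorphism and hence an isomorphism.
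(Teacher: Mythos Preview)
Your proof is correct and follows essentially the same approach as the paper's: both exhibit the inverse of $h$ by sending $\sigma \in \Aut(T_a)$ to the automorphism that is the identity on $T_1$, equals $\sigma$ on $T_a$, and equals $\iota\sigma\iota^{-1}$ on $T_b$, and both prove injectivity by noting that commutation with $\iota$ forces $\tau|_{T_b}$ to be determined by $\tau|_{T_a}$. Your version is slightly more explicit in introducing the notation $\iota_0$ and in spelling out the three-case verification that the glued $\tau$ commutes with $\iota$, whereas the paper simply writes ``one then checks that $\iota\tau\iota = \tau$''; but there is no substantive difference in the argument.
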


\begin{proof}
Because there are exactly two branches from the root of $T_n$, there are exactly two subtrees of $T_n$ rooted at a height-one vertex of $T_1$; call them $T_a$ and $T_b$.
 The height-$n$ vertices $V$ of $T_n$ may be decomposed into the union of the height-$(n-1)$ vertices $V_a \in T_a$ and $V_b \in T_b$ .  Because $\iota$ acts non-trivially on $T_1$ and is an automorphism of $T_n$, we have $\iota(T_a) = T_b$ and $\iota(T_b) = T_a$.
 
Clearly $h$ is a homomorphism.  To show $h$ is surjective, let $\sigma \in \Aut(T_a)$ and define $\tau \in \Aut(T_n)$ by
\[
\tau |_{T_1} = \id, \quad
\tau |_{T_a} = \sigma, \quad \text{ and }
\tau |_{T_b} =\iota \sigma \iota.
\]
 One then checks that $\iota \tau \iota = \tau$.  Since $\tau$ acts trivially on $T_1$,  $\tau \in \ker(C_n\to C_1)$ and $h(\tau) = \sigma$.
 
To show that $h$ is injective, let $\tau \in \ker h$, so that $\tau(x) = x$ for all $x \in T_a$.  Then since $\tau \in C_n$ we have
$\tau(\iota(x)) = \iota(\tau(x)) = \iota(x)$; that is $\tau$ acts trivially on all elements of $T_b$ as well.  Thus $\tau = \id$.
\end{proof}

In the next corollary, we describe the kernel of the restriction map $C_n \to C_{n-1}$.  Recall that the kernel of the restriction map $\Aut(T_j) \to \Aut(T_{j-1})$ is generated by the transpositions swapping a single pair of vertices connected to a given vertex at level $j-1$, and thus is isomorphic to $(\Z/2\Z)^{2^{j-1}}$.  Recall also that the {\em Hausdorff dimension} of a subgroup $H$ of $\Aut(T)$ is defined to be
$$\lim_{n \to \infty} \frac{\log_2 \#H_n}{\log_2\#\Aut(T_n)}, \label{hausdorff}
$$
where $H_n$ is the restriction of the action of $H$ to the tree $T_n$.  This gives a rough measure of the size of $H$ in $\Aut(T)$.  

\begin{corollary}\label{autpropcor}
Assume the hypotheses of Proposition~$\ref{autprop}$, and assume also that $p_n$ is separable.  Then there is an isomorphism between $\ker(C_n \to C_{n-1})$ and $\ker(\Aut(T_{n-1}) \to \Aut(T_{n-2}))$. In particular, 
\[
	\#\ker \left( C_n \to C_{n-1} \right) = 2^{2^{n-2}}
\]
and the Hausdorff dimension of $C_\infty$ is $1/2$.
\end{corollary}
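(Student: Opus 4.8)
Assume $n\ge 2$ (for $n=1$ there is nothing to prove). The plan is to run the level-$n$ case of Proposition~\ref{autprop} and then track what the isomorphism $h$ of that proposition does to the subgroup $\ker(C_n\to C_{n-1})$. Apply Proposition~\ref{autprop} to get the isomorphism $h\colon \ker(C_n\to C_1)\xrightarrow{\ \sim\ }\Aut(T_a)$, $h(\tau)=\tau|_{T_a}$, where $T_a$ and $T_b=\iota(T_a)$ are the two subtrees of $T_n$ rooted at the two height-one vertices. Each of $T_a,T_b$ is a complete binary rooted tree whose leaves lie $n-1$ levels below its root, so $\Aut(T_a)\cong\Aut(T_{n-1})$, and $T_a\cap T_{n-1}$ (its vertices through global level $n-1$) is the truncation of $T_a$ to $n-2$ levels, hence $\cong T_{n-2}$. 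Since the restriction $C_n\to C_1$ factors as $C_n\to C_{n-1}\to C_1$, we have $\ker(C_n\to C_{n-1})\subseteq\ker(C_n\to C_1)$, so it makes sense to ask for $h\bigl(\ker(C_n\to C_{n-1})\bigr)$. (The separability of $p_n$ enters only to guarantee that $T_n$ is the full binary rooted tree of height $n$ on which $\iota$ acts without fixed points, so that the hypotheses of Proposition~\ref{autprop} are genuinely in force.)

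The substantive step is to identify that image. By the construction in the proof of Proposition~\ref{autprop}, an element $\tau\in\ker(C_n\to C_1)$ acts trivially on $T_1$ and is completely determined by $\sigma:=\tau|_{T_a}$, via $\tau|_{T_b}=\iota\sigma\iota$. Since $T_{n-1}=T_1\cup(T_a\cap T_{n-1})\cup(T_b\cap T_{n-1})$ and $\tau|_{T_1}=\id$, we have $\tau|_{T_{n-1}}=\id$ if and only if $\tau$ is trivial on both $T_a\cap T_{n-1}$ and $T_b\cap T_{n-1}$; but $\iota$ interchanges these two sets (it preserves levels and swaps $T_a,T_b$) and $\tau$ commutes with $\iota$, so triviality of $\tau$ on $T_a\cap T_{n-1}$ already forces triviality on $T_b\cap T_{n-1}$. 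Hence $\tau\in\ker(C_n\to C_{n-1})$ iff $\sigma$ restricts to the identity on $T_a\cap T_{n-1}$, i.e.\ $h$ restricts to an isomorphism
\[
\ker(C_n\to C_{n-1})\ \xrightarrow{\ \sim\ }\ \ker\!\bigl(\Aut(T_a)\to\Aut(T_a\cap T_{n-1})\bigr)\ \cong\ \ker\!\bigl(\Aut(T_{n-1})\to\Aut(T_{n-2})\bigr).
\]
This is the one genuine point: $\iota$-equivariance collapses the two branch-subtrees into one, so the level-$n$ kernel is controlled by a single copy of a $\ker(\Aut(T_{n-1})\to\Aut(T_{n-2}))$; the only thing to be careful about is the off-by-one in the heights of $T_a$ and $T_a\cap T_{n-1}$.

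For the numerology, the description of $\ker(\Aut(T_j)\to\Aut(T_{j-1}))$ recalled before the statement gives $\ker(\Aut(T_{n-1})\to\Aut(T_{n-2}))\cong(\Z/2\Z)^{2^{n-2}}$, so $\#\ker(C_n\to C_{n-1})=2^{2^{n-2}}$. Using that each $C_j\to C_{j-1}$ is an epimorphism and that $C_1\cong\Z/2\Z$, telescoping yields $\#C_n=2\cdot\prod_{j=2}^{n}2^{2^{j-2}}=2^{\,2^{n-1}}$, while $\#\Aut(T_n)=2^{\,2^{n}-1}$. Since the maps $C_n\to C_{n-1}$ are surjective, the image of $C_\infty=\invlim C_n$ in $\Aut(T_n)$ is exactly $C_n$, so the Hausdorff dimension of $C_\infty$ is $\lim_{n\to\infty}\frac{\log_2\#C_n}{\log_2\#\Aut(T_n)}=\lim_{n\to\infty}\frac{2^{n-1}}{2^{n}-1}=\tfrac12$, as claimed. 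The whole argument is short; the main (and fairly mild) obstacle is getting the level bookkeeping in the second step exactly right.
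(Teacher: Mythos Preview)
Your proof is correct and follows essentially the same approach as the paper: both apply Proposition~\ref{autprop} at levels $n$ and $n-1$ and identify $\ker(C_n\to C_{n-1})$ with $\ker(\Aut(T_{n-1})\to\Aut(T_{n-2}))$ via the restriction isomorphism $h$. The paper packages this as a commutative diagram with exact rows and invokes the induced exact sequence on kernels, whereas you compute the image $h(\ker(C_n\to C_{n-1}))$ directly using $\iota$-equivariance; these are the same argument in slightly different dress. One small remark: your telescoping for $\#C_n$ assumes the surjectivity of each $C_j\to C_{j-1}$, which you do not justify; it is cleaner (and closer to the paper) to read off $\#C_n=\#C_1\cdot\#\Aut(T_{n-1})=2^{2^{n-1}}$ directly from the short exact sequence furnished by Proposition~\ref{autprop}.
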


\begin{proof}
Because we have assumed that $p_n$ is separable, $T_a$ is a complete binary rooted tree of height $n-1$, and we have $\Aut(T_a) \cong \Aut(T_{n-1})$.  By Proposition~\ref{autprop} we then have a commutative diagram

\[
\begin{CD}
0 @>>> \Aut(T_{n-1}) @>>> C_n @>>> C_1 @>>> 0\\
 @V{\rm id}VV     @Vr_1VV         @Vr_2VV         @V{\rm id}VV              @.\\
0 @>>> \Aut(T_{n-2}) @>>> C_{n-1} @>>> C_1 @>>> 0\\
\end{CD}
\]

\vspace{0.1 in}
\noindent where the rows are exact and the maps $r_1$ and $r_2$ are restriction.  It is straightforward to check that this gives an exact sequence
$$\ker {\rm id} \to \ker r_1 \to \ker r_2 \to \ker {\rm id},$$
which completes the proof.  The statement about Hausdorff dimension follows since $$\#\ker \left( \Aut(T_n) \to \Aut(T_{n-1}) \right) = 2^{2^{n-1}}.\qedhere
$$
\end{proof}

The orbit of the critical point when $b=1$ will play a special role in the sequel, so we introduce the following notation.
 
\begin{definition}
Let 
\[
	\phi^*(x) = k(x^2 + 1)/x := p^*(x)/q^*(x),
\]  
and define $p_n^*$ and $q_n^*$ by the recursion in equation~\eqref{rec1}.
Finally, let $\delta_n  = kp_n^*(1)$.
\end{definition}

We now turn to the discriminant of $p_n$.  Note that one consequence of the following Corollary is that $\Disc (p_n)$ is a square in $K_1 = K(\sqrt{-b})$ for all $n \geq 2$, so that the action of $\Gal\left(K_n/K_{1}\right)$ on the roots of $p_n$ is contained in the alternating group on $2^n$ letters.  

\begin{corollary} \label{disccor}
Let $k,b  \in K^*$ and  $\phi(x) = k(x^2 + b)/x$.
Then for all $n\geq 2$, we have 
\[
	\Disc (p_n) = \pm k^{2^n(2^{n-1} - 1)} b^{2^{2n-2}} \Disc (p_{n-1})^2 p^*_n(1)^2.
\]
\end{corollary}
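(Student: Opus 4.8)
The plan is to apply Theorem~\ref{discthm} directly to $\phi(x) = k(x^2+b)/x$ and then to evaluate the one factor in the resulting formula that is not yet in the desired shape, namely the product over the critical points. Write $p(x) = k(x^2+b)$ and $q(x) = x$, so that $d = 2$, $\ell(p) = k$, $d_p = 2$, $d_q = 1$; a one-line computation gives $c = qp'-pq' = k(x^2-b)$, hence $d_c = 2$, $\ell(c) = k$, and the critical points are $\gamma_{1,2} = \pm\sqrt{b}$. Since $\phi(\infty) = \infty$ we have $\phi^n(\infty) = \infty \neq 0$ for all $n$, so the hypotheses of Theorem~\ref{discthm} hold and we are in the case $\phi(\infty) = \infty$; thus formula~\eqref{discform2} applies. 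Substituting $d=2$, $d_q = 1$, $d_c = 2$ into~\eqref{k1value} gives $k_1 = 2^{2n-2} - 2^n$ and $k_2 = 2^n$, while $\Res(q,p) = \Res(x, k(x^2+b)) = \pm kb$. Therefore~\eqref{discform2} reads
\[
\Disc(p_n) = \pm\, k^{2^{2n-2}-2^n}\, k^{2^n}\, \Disc(p_{n-1})^2\, (kb)^{2^{n-1}(2^{n-1}-2)} \prod_{c(\gamma) = 0} p_n(\gamma),
\]
and it remains only to compute $\prod_{c(\gamma)=0} p_n(\gamma) = p_n(\sqrt b)\,p_n(-\sqrt b)$.

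For this I would prove, by induction on $n$ using the de-homogenized recursions $p_n = k(p_{n-1}^2 + b\,q_{n-1}^2)$, $q_n = p_{n-1}q_{n-1}$ (coming from $P(X,Y) = k(X^2 + bY^2)$, $Q(X,Y) = XY$) together with the corresponding recursions for $p_n^*, q_n^*$, the polynomial identities
\[
p_n(x) = b^{2^{n-1}}\, p_n^*(x/\sqrt b), \qquad q_n(x) = b^{(2^n-1)/2}\, q_n^*(x/\sqrt b).
\]
The base case $n = 1$ is immediate, and the inductive step amounts to the fact that conjugation by $x\mapsto x/\sqrt b$ carries $\phi$ to $\phi^*$: explicitly, $k\left(b^{2^{n-1}}(p_{n-1}^*)^2 + b\cdot b^{2^{n-1}-1}(q_{n-1}^*)^2\right) = b^{2^{n-1}}\,k\left((p_{n-1}^*)^2 + (q_{n-1}^*)^2\right)$, with the analogous line for $q_n$. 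Evaluating the first identity at $x = \sqrt b$ gives $p_n(\sqrt b) = b^{2^{n-1}}\,p_n^*(1)$, and since $p_n$ is even (as already noted in this section) we get $p_n(-\sqrt b) = p_n(\sqrt b)$, so $\prod_{c(\gamma)=0} p_n(\gamma) = b^{2^n}\,p_n^*(1)^2$.

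Substituting this back and collecting exponents would then give the claimed identity: the total exponent of $k$ is $(2^{2n-2}-2^n) + 2^n + 2^{n-1}(2^{n-1}-2) = 2^{2n-1} - 2^n = 2^n(2^{n-1}-1)$; the total exponent of $b$ is $2^{n-1}(2^{n-1}-2) + 2^n = 2^{2n-2}$; and the remaining factors are precisely $\Disc(p_{n-1})^2\, p_n^*(1)^2$. The only place that requires genuine care is the inductive relation between $p_n$ and $p_n^*$, but once the conjugacy $\phi^g = \phi^*$ with $g(x) = x/\sqrt b$ is observed this is routine; everything else is substitution into Theorem~\ref{discthm} and bookkeeping with powers of two, so I do not anticipate a real obstacle.
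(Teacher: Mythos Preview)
Your proposal is correct and follows essentially the same route as the paper: both apply Theorem~\ref{discthm} in the $\phi(\infty)=\infty$ case, compute the same values of $k_1,k_2$ and $\Res(q,p)$, and handle the critical-point product via the identity $p_n(\sqrt b)=b^{2^{n-1}}p_n^*(1)$ established by induction on the recursions for $p_n,q_n$. Your version states this last fact as a polynomial identity $p_n(x)=b^{2^{n-1}}p_n^*(x/\sqrt b)$ before specializing, which is a harmless (and arguably cleaner) variant of the paper's ``double induction.''
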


\begin{proof} Since $\phi(\infty) = \infty$, we have $\phi^n(\infty) = \infty \neq 0$ for all $n$.  We thus may apply~\eqref{discform2} with the following data:
\[
 d = 2, \quad  d_p = 2,  \quad   d_q=1, \quad
 \  \text{ and } \   c = k(x^2 - b), \text{ which gives } \  d_c = 2.
\]
We can then compute the exponents given in~\eqref{k1value}:
\[
	k_1 = 2^{2n-2}-2^n \quad \text{ and } \quad k_2 = 2^n.
\]
We also have 
\[
\ell(p) = \ell(c) = k, \quad \ell(q)=1, \quad
p_n = k(p_{n-1}^2 + bq_{n-1}^2), \quad q_n = p_{n-1}q_{n-1}.
\]
Since $p(x) = k(x^2+b)$ and  $q(x)=x$,
an induction shows that $p_n$ is even for all~$n$.  Thus $p_n(\sqrt{b}) = p_n(-\sqrt{b})$.  A double induction on both $q_n$ and $p_n$ gives 
\begin{equation}\label{pnrootb}
	q_n(\sqrt{b}) =b^{(2^n-1)/2}q_n^*(1) \quad \text{ and } \quad p_n(\sqrt{b}) = b^{2^{n-1}}p_n^*(1).
\end{equation}
  Finally, $\Res(q,p) = \ell(q)^2 p(0) = kb$.  The Corollary now follows from substituting the relevant values into~\eqref{discform2} and simplifying.
\end{proof}

\begin{theorem} \label{irredthm}
Let $k,b  \in K$, $\phi(x) = k(x^2 + b)/x$.  Then  $p_n$ is irreducible if none of $-b, -b\delta_i, \delta_i$ is a square in $K$ for $2 \leq i \leq n$.
\end{theorem}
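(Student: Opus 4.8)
The plan is to induct on $n$, working not with the splitting field $K_{n-1}$ of $p_{n-1}$ but with a single root at each level. Put $\rho_0 = \sqrt{-b}$, a root of $p_1 = k(x^2+b)$, and for $j \geq 2$ choose a root $\rho_{j-1}$ of $p_j$ with $\phi(\rho_{j-1}) = \rho_{j-2}$; set $M_j = K(\rho_{j-1})$ and $M_0 = K$. Since $\rho_{j-1}$ is a root of $p(x) - \rho_{j-2}q(x) = kx^2 - \rho_{j-2}x + kb$, we have $M_j = M_{j-1}(\sqrt{d_j})$ with $d_j := \rho_{j-2}^2 - 4k^2b$, and $M_1 = K(\sqrt{-b})$. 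Because $M_n = K(\rho_{n-1})$ with $\rho_{n-1}$ a root of $p_n$ and $\deg p_n = 2^n$, it suffices to show every step $M_{j-1} \subsetneq M_j$ is quadratic, i.e.\ $-b \notin K^{*2}$ and $d_j \notin M_{j-1}^{*2}$ for $2 \leq j \leq n$: this forces $[M_n:K] = 2^n$ and hence the irreducibility of $p_n$. (Equivalently, one may run the induction through Lemma~\ref{irredlem}: here $\phi$ has critical points $\pm\sqrt b$ with $\phi(\pm\sqrt b) = \pm 2k\sqrt b \neq \infty$, and $C = \Disc(p)\prod_i\phi(\gamma_i)^{-1} = (-4k^2b)(-4k^2b)^{-1} = 1$, so $p_n$ is irreducible over $K$ iff $\rho_{n-2}^2 - 4k^2b \notin K_{n-1}^{*2}$.)

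The crux is to trade the uncontrollable condition $d_j \notin M_{j-1}^{*2}$ for one in the fixed quadratic field $K_1 := K(\sqrt{-b})$, by pushing down a norm. Over $K_1$ the recursion $p_m = k(p_{m-1}^2 + bq_{m-1}^2)$ factors as $p_m = k\,r_m\,\overline{r_m}$, where $r_m := p_{m-1} - \sqrt{-b}\,q_{m-1}$ is the numerator of $\phi^{m-1} - \sqrt{-b}$, has degree $2^{m-1}$, and satisfies $r_m(-x) = \overline{r_m}(x)$ (here $\overline{\phantom{x}}$ is conjugation over $K_1/K$, using that $p_{m-1}$ is even and $q_{m-1}$ odd). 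Granting inductively that $p_{j-1}$ is irreducible over $K$, and noting $\sqrt{-b} \in M_{j-1} = K(\rho_{j-2})$ since $\phi^{j-2}(\rho_{j-2}) \in \{\pm\sqrt{-b}\}$, one checks quickly that $p_{j-1}$ has exactly the two irreducible factors $r_{j-1}, \overline{r_{j-1}}$ over $K_1$; hence $[M_{j-1}:K_1] = 2^{j-2}$ and the $K_1$-conjugates of $\rho_{j-2}$ are precisely the roots of $r_{j-1}$. Therefore
\[
N_{M_{j-1}/K_1}(d_j) \;=\; \prod_{r_{j-1}(\rho)=0}(\rho^2 - 4k^2b) \;=\; \frac{r_{j-1}(2k\sqrt b)\,\overline{r_{j-1}}(2k\sqrt b)}{\ell(r_{j-1})^2} \;=\; \frac{p_{j-1}(2k\sqrt b)}{\ell(p_{j-1})},
\]
using $p_{j-1} = k\,r_{j-1}\overline{r_{j-1}}$ and $\ell(r_{j-1}) = \ell(p_{j-2})$. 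Now evaluating the dehomogenized recursion \eqref{rec1} at the critical point $\sqrt b$ gives $p_j(\sqrt b) = (\sqrt b)^{2^{j-1}}p_{j-1}(2k\sqrt b)$, while the computation in the proof of Corollary~\ref{disccor} gives $p_j(\sqrt b) = b^{2^{j-1}}p_j^*(1) = b^{2^{j-1}}\delta_j/k$ and $\ell(p_{j-1}) = k^{2^{j-1}-1}$; combining,
\[
N_{M_{j-1}/K_1}(d_j) \;=\; b^{2^{j-2}}\,\delta_j\,k^{-2^{j-1}} \;\equiv\; \delta_j \pmod{K^{*2}} \qquad (j \geq 3),
\]
because $2^{j-2}$ is even for $j \geq 3$; for $j = 2$ one instead computes directly that $d_2 = -b(1+4k^2) \equiv -b\delta_2 \pmod{K^{*2}}$, with $M_1 = K_1$ so that no norm is needed.

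The induction then closes. For $j = 1$, $[M_1:K] = 2$ since $-b \notin K^{*2}$. For $j \geq 2$, assume $p_{j-1}$ irreducible over $K$, so $[M_{j-1}:K] = 2^{j-1}$. If $d_j$ were a square in $M_{j-1}$, then $N_{M_{j-1}/K_1}(d_j)$ would be a square in $K_1$; for $j \geq 3$ this gives $\delta_j \in K_1^{*2}$, i.e.\ $\delta_j \in K^{*2}$ or $-b\delta_j \in K^{*2}$, and for $j = 2$ it gives $-b\delta_2 \in K_1^{*2}$, i.e.\ $-b\delta_2 \in K^{*2}$ or $\delta_2 \in K^{*2}$ --- in every case contradicting the hypothesis. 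Hence $d_j \notin M_{j-1}^{*2}$, so $[M_j:K] = 2^j$ and, $\deg p_j$ being $2^j$, $p_j$ is irreducible over $K$. The case $j = n$ is the theorem. I expect the main obstacle to be the norm identity $N_{M_{j-1}/K_1}(d_j) \equiv \delta_j$: there is no direct handle on squares in the tower field $M_{j-1}$, and taking a norm all the way down to $K$ collapses everything to a square (exactly why $p_n(\gamma_1)p_n(\gamma_2) = p_n(\gamma_1)^2$ gives nothing here), so one is forced to stop at $K_1$, and the two hypotheses $\delta_j \notin K^{*2}$, $-b\delta_j \notin K^{*2}$ are precisely what rules out $\delta_j \in K_1^{*2}$. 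Proving the identity needs both the structural fact that the $K_1$-conjugates of $\rho_{j-2}$ are the roots of $r_{j-1}$ (resting on the inductive irreducibility of $p_{j-1}$ and on $\sqrt{-b} \in M_{j-1}$) and the bookkeeping tying $\prod_\rho(\rho^2 - 4k^2b)$ to $p_{j-1}(2k\sqrt b)$ and thence, through Corollary~\ref{disccor}, to $\delta_j$; the parity of $2^{j-2}$ --- needed to shed the factor $b^{2^{j-2}}$ --- is what forces $j = 2$ to be handled separately and is the source of the extra $-b$ in the hypothesis.
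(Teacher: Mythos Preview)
Your proof is correct and follows essentially the same strategy as the paper's: induct on $n$, handle $n=1,2$ directly, and for $n\geq 3$ reduce the question to whether $\delta_n$ is a square in $K_1=K(\sqrt{-b})$ via a norm computation, then use that $\delta_n\in K_1^{*2}$ forces $\delta_n\in K^{*2}$ or $-b\delta_n\in K^{*2}$.

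The one genuine difference is the field in which you test squareness. The paper invokes Lemma~\ref{irredlem} and works in the splitting field $K_{n-1}$, writing $N_{K_{n-1}/K_1}$ for what is really the product over the $K_1$-conjugates of a single root $\alpha$; since $[K_{n-1}:K_1(\alpha)]$ is typically even, the literal norm $N_{K_{n-1}/K_1}$ is automatically a square and carries no information. Your setup in the root field $M_{j-1}=K(\rho_{j-2})$ avoids this imprecision entirely: the norm $N_{M_{j-1}/K_1}$ is exactly the product over roots of $r_{j-1}$, and showing it is not a square in $K_1$ directly gives $d_j\notin M_{j-1}^{*2}$, hence $[M_j:K]=2^j$ and $p_j$ irreducible. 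The factorization $p_{j-1}=k\,r_{j-1}\,\overline{r_{j-1}}$ over $K_1$ (with the two factors irreducible because $p_{j-1}$ is irreducible over $K$ and $[K_1:K]=2$) is the clean structural input replacing the paper's pairing $\{\pm\alpha\}$ of roots. So your argument is a tidier packaging of the same idea, and in fact reads as the ``correct'' version of the paper's norm step.
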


\begin{remark} It is necessary to assume that both $-b\delta_i$ and $\delta_i$ are not squares in $K$.  Indeed, in the case $k=1$, $b = -5$, one has 
\[
-b\delta_2  = 25, \quad \delta_2 = -5, \quad \text{and }
p_2 = (x^2 - 5x + 5)(x^2 + 5x + 5).
\]   In the case $k = 2/3, b = 1$ one has 
\[
\delta_2  = 100/81, \quad -b\delta_2= -100/81, \quad \text{and }
p_2 = (2/27)(4x^2 + 1)(x^2 + 4).
\]
\end{remark}

\begin{proof}
We begin by considering $p_1$ and $p_2$.  Clearly $p_1$ is irreducible if and only if $-b$ is not a square in $K$.  From \eqref{rec1} we have 
\[
	p_2 = k(p_1^2 + bq_1^2) = k(p_1 - x\sqrt{-b})(p_1 + x\sqrt{-b}).
\]
  Assuming that $-b$ is not a square in $K$, we have that $p_2$ is irreducible if and only if $p_1 - x\sqrt{-b}$ is irreducible over $K(\sqrt{-b})$, which holds if $\Disc (p_1 - x\sqrt{-b}) = -b(1 + 4k^2)$ is not a square in $K(\sqrt{-b})$.  A straightforward computation shows this holds if and only if $1 + 4k^2$ is a square in $K$ or $-b$ times a square in $K$.   Since $1 + 4k^2 = k^{-1}p^*_2(1)$ and neither $-b\delta_2 = -bkp^*_2(1)$ nor $\delta_2 = kp^*_2(1)$ is a square in $K$, we conclude $p_2$ is irreducible.

Now induct on $n$.  The cases $n=1,2$ have been handled, so let $n \geq 3$ and assume that $p_{n-1}$ is irreducible.  
By Lemma \ref{irredlem}, it is enough to show that for some root~$\alpha$ of~$p_{n-1}$, 
\[
	C \left( \phi(\sqrt{b}) - \alpha \right) \left( \phi(-\sqrt{b}) - \alpha\right)\not\in K_{n-1}^{*2}.
\]
  We do this by taking the norm of the left-hand side over $K_1 = K(\sqrt{-b})$:
\begin{align} \label {tmbg2}
	N_{K_{n-1}/K_1} &\left( -C \left( \phi(\sqrt{b}) - \alpha \right) 
	\left( \phi(\sqrt{b}) + \alpha \right) \right)\nonumber\\
	& = 
	\prod_{\phi^{n-2}(\alpha) =
	 \sqrt{-b}} -C \left( \phi(\sqrt{b}) - \alpha \right) \left( \phi(\sqrt{b}) + \alpha \right).
\end{align}
Since $\phi(-x) = -\phi(x)$, $\phi^{n-2}(\alpha) = \sqrt{-b}$ implies $\phi^{n-2}(-\alpha) = -\sqrt{-b}$. 
Thus 
\[
	\left\{ \pm \alpha : \phi^{n-2}(\alpha) = \sqrt{-b}\right\}
	=\left\{\alpha : \phi^{n-1}(\alpha) = 0\right\}.
\]
Hence the right-hand side of~\eqref{tmbg2} is the same as 
\[
	 (-C)^{(\deg p_{n-1})/2} \prod_{\phi^{n-1}(\alpha) = 0} \left( \phi(\sqrt{b}) - \alpha \right).
\]
Because $\phi(\infty) = \infty$, we have $\phi^n(\infty) \neq 0$ for all $n$.  Hence $\deg p_{n-1} = 2^{n-1}$, and $(\deg p_{n-1})/2$ is even when $n \geq 3$.  Furthermore, since $\{\alpha : \phi^{n-1}(\alpha) = 0\}$ is the same as the set of roots of $p_{n-1}$ and since $p_{n-1}(\alpha) = \ell(p_{n-1}) \prod (x - \alpha)$, the left-hand side of \eqref{tmbg2} is not a square in $K_1$ provided 
\[
	\ell(p_{n-1})^{-1} p_{n-1}\left(\phi(\sqrt{b})\right) \not\in K_1^{*2}.
\]
 Finally, the recursion in~\eqref{rec1} applied in this case gives
$p_n(\sqrt{b}) = b^{2^{n-2}}p_{n-1}(\phi(\sqrt{b}))$.  Inductive arguments show that 
\[
	\ell(p_{n-1}) = k^{2^n -1} \quad\text{ and }\quad p_n(\sqrt{b}) = b^{2^{n-1}}p_n^*(1), 
\]
meaning we must show 
\[
	k^{-(2^n -1)} b^{2^{n-1} - 2^{n-2}} p^*_n(1) \not\in K_1^{*2}.
\]
  But by assumption neither $\delta_n=kp^*_n(1)$ nor 
$-b\delta_n = -bkp^*_n(1)$ is a square in $K$, and thus $\delta_n$ is not a square in $K_1$.  (To see this, suppose that $c \in K$ with $c = (a_1 + a_2 \sqrt{-b})^2$.  Then $c = a_1^2 - ba_2^2$ with either $a_1 = 0$ or $a_2 = 0$, meaning either $c$ or $-bc$ is a square in $K$.)   This completes the main induction.
\end{proof}


Recall from Corollary \ref{autpropcor} that  $[K_n : K_{n-1}] \leq 2^{2^{n-2}}$, with equality occurring if and only if $\ker(G_n \to G_{n-1}) \cong \ker(C_n \to C_{n-1})$.
Using the methods of Section~\ref{galgen}, we give a criterion ensuring that $[K_n : K_{n-1}]$ is as large as possible.  

\begin{theorem} \label{specmaxcor1}
Let $k,b  \in K^*$ and define
$\phi(x) = k(x^2 + b)/x $.
Assume that $p_{n-1}$ is irreducible and $n \geq 3$.  Then we have $[K_n : K_{n-1}] = 2^{2^{n-2}}$ provided that there exists a prime $\p$ of $K$ with 
\begin{equation}\label{valassump}
	v_\p(\delta_n) \text{ odd,} \quad v_\p(\delta_j) = 0 \text{ for }1 \leq j \leq n-1, 
	\text{  and }  v_\p(k) = v_\p(b) = v_\p(2) = 0.
\end{equation}
\end{theorem}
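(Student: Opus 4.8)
The plan is to run the argument of Theorem~\ref{maxthm1} and Corollary~\ref{maxcor2} in this setting, the one new feature being that the automorphism $\iota(x)=-x$ forces the $2^{n-1}$ roots of $p_{n-1}$ to collapse to $2^{n-2}$ pairs. First I would record the standing hypotheses: $\phi(\infty)=\infty$, so $\phi^m(\infty)\neq 0$ for all $m$; $p_{n-1}$ is separable, being irreducible over a field of characteristic $0$; and $p_n$ is separable, which I deduce from Corollary~\ref{disccor} once I observe that $v_\p(\delta_n)$ being odd forces $p_n^*(1)\neq 0$, hence $\Disc\ p_n\neq 0$. Since $\phi$ is odd, $p_{n-1}$ is even (as in the proof of Corollary~\ref{disccor}), so its roots come in pairs $\{\beta,-\beta\}$ with $\beta\neq 0$ (because $\phi^m(0)=\infty$ for $m\geq 1$), and $p(x)-\beta q(x)=kx^2-\beta x+kb$ has discriminant $\beta^2-4k^2b$, depending only on $\beta^2$. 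Hence, in the notation of~\eqref{deltajdef}, the $\delta_j$ take only $2^{n-2}$ distinct values $\delta^{(1)},\dots,\delta^{(2^{n-2})}$, one per pair, and by the Kummer-theoretic description in Section~\ref{galgen}, $K_n=K_{n-1}(\sqrt{\delta^{(j)}}:1\leq j\leq 2^{n-2})$. Exactly as in the proof of Theorem~\ref{maxthm1}, the group $V$ of relations among the $\delta^{(j)}$ in $K_{n-1}^*/K_{n-1}^{*2}$ is an $\mathbb{F}_2[G_{n-1}]$-module, $G_{n-1}$ is a $2$-group (it embeds in $\Aut(T_{n-1})$), and since $p_{n-1}$ is irreducible, $G_{n-1}$ acts transitively on the roots, hence on the pairs. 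By Stoll's Lemma~\ref{stolllem}, $V\neq 0$ implies $V^{G_{n-1}}\neq 0$, and by transitivity $V^{G_{n-1}}\subseteq\{0,(1,\dots,1)\}$; so $[K_n:K_{n-1}]=2^{2^{n-2}}$ unless $\prod_j\delta^{(j)}\in K_{n-1}^{*2}$, and it suffices to show $\prod_j\delta^{(j)}\notin K_{n-1}^{*2}$.

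The key computation identifies $\prod_j\delta^{(j)}$ with $\delta_n$ up to squares in $K_{n-1}^*$. Write $p_{n-1}(x)=r(x^2)$ with $r\in K[x]$ of degree $2^{n-2}$ and leading coefficient $\ell(r)=\ell(p_{n-1})=k^{2^{n-1}-1}$; the roots of $r$ are exactly the $2^{n-2}$ distinct values $\beta^2$, so
\[
\prod_j\delta^{(j)}=\prod_{r(\rho)=0}(\rho-4k^2b)=(-1)^{2^{n-2}}\,\frac{r(4k^2b)}{k^{2^{n-1}-1}}=\frac{r(4k^2b)}{k^{2^{n-1}-1}},
\]
using $n\geq 3$, so $2^{n-2}$ is even. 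Now $r(4k^2b)=p_{n-1}(2k\sqrt{b})$, and since $\phi(\sqrt{b})=2k\sqrt{b}$, applying~\eqref{rec1} (with $q(x)=x$, so $p_n(x)=x^{2^{n-1}}p_{n-1}(\phi(x))$) together with~\eqref{pnrootb} gives $p_{n-1}(2k\sqrt{b})=p_n(\sqrt{b})/b^{2^{n-2}}=b^{2^{n-2}}p_n^*(1)$. Hence $\prod_j\delta^{(j)}=b^{2^{n-2}}p_n^*(1)/k^{2^{n-1}-1}$, and since $2^{n-2}$ is even while $2^{n-1}-1$ is odd, this is congruent modulo $K_{n-1}^{*2}$ to $k\,p_n^*(1)=\delta_n$.

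It remains to show $\delta_n\notin K_{n-1}^{*2}$; as $\delta_n\in K$, it is enough to show $\p$ is unramified in $K_{n-1}$, for then a prime $\P$ of $K_{n-1}$ above $\p$ has $v_\P(\delta_n)=v_\p(\delta_n)$ odd. To see $\p$ is unramified, I apply Corollary~\ref{disccor}: using $v_\p(k)=v_\p(b)=0$ it gives $v_\p(\Disc\ p_j)=2v_\p(\Disc\ p_{j-1})+2v_\p(p_j^*(1))=2v_\p(\Disc\ p_{j-1})+2v_\p(\delta_j)$ for $2\leq j\leq n-1$. Since $\Disc\ p_1=-4k^2b$ has $v_\p=2v_\p(2)=0$ and $v_\p(\delta_j)=0$ for $1\leq j\leq n-1$ by hypothesis, induction yields $v_\p(\Disc\ p_{n-1})=0$; as $v_\p(\ell(p_{n-1}))=0$ as well and the coefficients of $p_{n-1}$ are $\p$-integral, $\p$ is unramified in the splitting field $K_{n-1}$ of $p_{n-1}$. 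This gives $\prod_j\delta^{(j)}\notin K_{n-1}^{*2}$ and hence $[K_n:K_{n-1}]=2^{2^{n-2}}$. The main obstacle is the identification $\prod_j\delta^{(j)}\equiv\delta_n\pmod{K_{n-1}^{*2}}$ — tracking the $\pm\beta$ pairing through the Kummer picture and pinning it cleanly to the single critical-orbit sequence $(\delta_n)$ — after which the ramification bookkeeping via Corollary~\ref{disccor} is routine.
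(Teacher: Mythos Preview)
Your proof is correct and follows essentially the same strategy as the paper: reduce via Stoll's lemma to showing a single product of the Kummer generators is a nonsquare in $K_{n-1}$, identify that product with $\delta_n$ up to squares, and then use Corollary~\ref{disccor} and the valuation hypotheses to show $\p$ is unramified in $K_{n-1}$. The only difference is in how you package the pairing $\{\beta,-\beta\}$: the paper selects a representative from each pair by the condition $\phi^{n-2}(\beta)=\sqrt{-b}$ and works with $\Gal(K_{n-1}/K(\sqrt{-b}))$, whereas you write $p_{n-1}(x)=r(x^2)$ and let $G_{n-1}$ act directly on the pairs. Your route avoids introducing $\sqrt{-b}$ and the subgroup over $K(\sqrt{-b})$, which is a mild simplification; otherwise the two arguments coincide line by line, including the final identification $\prod_j\delta^{(j)}\equiv\delta_n\pmod{K_{n-1}^{*2}}$ via $p_{n-1}(2k\sqrt{b})=b^{2^{n-2}}p_n^*(1)$.
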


\begin{remark}
Because $\#C_1 = 2 = \deg p_1$ and $\#C_2 = 4 = \deg p_2$, it follows that $[K_1 : K] \leq 2$ and $[K_2 : K_1] \leq 2$.  We have $[K_1 : K] = 2$ if and only if $p_1$ is irreducible and $[K_1 : K] = 4$ if and only if $p_2$ is irreducible.  Note that $p_1$ is irreducible if and only if $-b$ is not a square in $K$, and from the proof of Theorem~\ref{irredthm} we have that $p_2$ is irreducible if and only if $-b$, $-b\delta_2$ and $\delta_2$ are all not squares in $K$.  
\end{remark}

\begin{proof}
As in the discussion preceding Lemma~\ref{stolllem}, $K_{n}$ is obtained from $K_{n-1}$ by adjoining the square roots of $\Disc\ p(x) - \beta q(x)$, as $\beta$ varies over the roots of $p_{n-1}$.  In the present case, $\Disc\ p(x) - \beta q(x) = \beta^2 - 4bk^2$.  Since $-\beta$ is also a root of $p_{n-1}$, half of the square roots are redundant, and we have
\[
	K_n = K_{n-1}\left( \sqrt{\beta^2 - 4bk^2}: p_{n-2}(\beta) = \sqrt{-b} \right).
\]

In analogy with the discussion preceding Lemma~\ref{stolllem}, we have $[K_n:K_{n-1}] = 2^{2^{n-2}}/\#V$, where 
\begin{align}
	V = &\{(e_1, \ldots, e_{2^{n-2}}) \in \mathbb{F}_2^{2^{n-2}} 
	: \prod_j (\beta_j^2 - 4bk^2)^{e_j} \in K_{n-1}^{*2} \}, \text{ and }\nonumber \\
	&\beta_1, \ldots, \beta_j \text{ are the } 2^{n-2} \text{ solutions to }
	 p_{n-2}(\beta) = \sqrt{-b}. \label{betajdef}
\end{align}

The action of $G:=\Gal\left(K_{n-1}/K(\sqrt{-b})\right)$ on the $\beta_j$ gives an action of $G$ on $V$ as linear transformations, thereby making $V$ a $\mathbb{F}_2[G]$-module.  Lemma~\ref{stolllem} now applies to show that if $\#V > 1$, then $V$ contains a $G$-invariant element.  Since $p_{n-1}$ is irreducible, $\Gal(K_{n-1}/K)$ acts transitively on the $\beta_j$.  By the definition of the $\beta_j$ in~\eqref{betajdef}, any $\sigma$ mapping one $\beta_j$ to another must fix $\sqrt{-b}$ and thus must lie in $G$.  Hence $[K_n:K_{n-1}] = 2^{2^{n-2}}$ provided that 
\[
	 \prod_j (\beta_j^2 - 4bk^2) \not\in K_{n-1}^{*2}.
\]


The hypotheses ensure that none of $b$, $k$, or $p_i^*(1)$ can be zero, which by Theorem~\ref{irredthm} shows that $p_{n-1}$ is irreducible.  Because $n \geq 3$ and $p_{n-1}$ is separable (since $K$ is perfect), there are an even number of the $\beta_j$, and we may replace $\prod_j (\beta_j^2 - 4bk^2)$ with $\prod_j (4bk^2 - \beta_j^2)$.
Further,  the roots of $p_{n-1}$ consist of $\{\pm\beta_1, \ldots, \pm \beta_j\}$, so we have that $[K_n:K_{n-1}] = 2^{2^{n-2}}$ provided that 
\begin{equation} \label{productnosquare}
\prod_{p_{n-1}(\beta) = 0} (2k\sqrt{b} - \beta) \not\in K_{n-1}^{*2}.
\end{equation}
This product is $\ell(p_{n-1})^{-1} p_{n-1}(\phi(\sqrt{b}))$, which via~\eqref{rec1} is the same as 
$\ell(p_{n-1})^{-1} p_{n}(\sqrt{b})$ up to squares, since $n \geq 3$.   Because $\ell(p_{n-1}) = k^{2^n-1}$, we have that $\ell(p_{n-1})^{-1} p_{n}(\sqrt{b})$ is a square in $K_{n-1}$ if and only if $kp_{n}(\sqrt{b})$ is a square in $K_{n-1}$.  As in equation~\eqref{pnrootb}, $p_n(\sqrt{b}) = b^{2^{n-1}}p_n^*(1)$, so~\eqref{productnosquare} holds provided that $\delta_n = kp_n^*(1) \not\in K_{n-1}^{*2}.$

By assumption in~\eqref{valassump}, there is a prime $\p$ of $K$ with $v_\p(k) = v_\p(b) = v_\p(2) = 0$.   We thus have 
\[
	 v_\p (\Disc\ p_1) =v_\p (4bk^2) = 0.
\]
 Also from~\eqref{valassump}, we assume $v_\p(\delta_j) = v_\p(kp_j^*(1)) = 0$ for $1 \leq j \leq n-1$, and since $v_\p(k) =0$, we have 
 \[
 	v_\p(p_j^*(1)) = 0 \text{ for } 1 \leq j \leq n-1.
\]
By induction,
Corollary~\ref{disccor} implies that $v_\p(\Disc\ p_{n-1}) = 0$. Therefore $\p$ does not ramify in $K_{n-1}$, whence there is a prime $\P$ of $K_{n-1}$ with $v_\P(\p)$ odd.  We then have $v_\P(\delta_n)$ odd, which means that $\delta_n$ cannot be a square in $K_{n-1}$.  
\end{proof}

By Theorem \ref{irredthm}, to show that $p_n$ is irreducible for all $n \geq 1$, it suffices to show that none of $-b, -b\delta_n,$ or $\delta_n$ is a square in $K$ for all $n \geq 2$.  By Theorem \ref{specmaxcor1}, a relatively small amount of knowledge about the primes dividing the $\delta_n$ then allow one to show $G_{n} \cong C_{n}$.   We also note that given Proposition~\ref{prop: twist prop}, the role played by $b$ is actually a minor one because it is simply a twist parameter. 

In the next section, we prove Theorem \ref{finiteindex}, which implies Theorem \ref{finiteindex1}.  We then give several sufficient conditions to show $- b\delta_n$ and $\delta_n$ are not squares in $K$ (Theorems \ref{nosquare}--\ref{fixedpoint}), before giving a further sufficient condition on $k$ that ensures that $G_n \cong C_n$ (Theorem \ref{maxthm}).


%

\section{Maximality and finite index results}
In this section we apply the results of Section \ref{galaut} to obtain results showing $G_\infty$ is a large subgroup of $C_\infty$ in many cases.  The map $\phi(x) = k(x^2 + b)/x \in K(x)$ is given in homogeneous coordinates by  
\begin{equation} \label{cmphi}
\phi([X,Y]) = [k(X^2 + b Y^2), XY], \qquad k, b \in K.
\end{equation}
Recall that 
\begin{align*}
	P_n(X,Y)  &= 
	\begin{cases}
	k\left(X^2 + bY^2\right) & \text{ if } n=1, \\
	k\left(P_{n-1}(X,Y)^2 + bQ_{n-1}(X,Y)^2\right) & \text{ if } n\geq 2, \text{ and }
	\end{cases}\\
	Q_n(X,Y) & = \begin{cases}
	XY & \text{ if } n=1, \\
	 P_{n-1}(X,Y)Q_{n-1}(X,Y) & \text{ if } n\geq 2.
	 \end{cases}\\
\end{align*}
Thus $\phi^n([X,Y]) = [P_n(X,Y), Q_n(X,Y)]$.  Recall also that 
\begin{align*}
	p_n(x) &= P_n(x,1) & q_n(x) &= Q_n(x,1),\\
	P_n^*(X,Y) &= P_n(X,Y)|_{b=1}, & Q_n^*(X,Y) &= Q_n(X,Y)|_{b=1},\\
	p_n^*(x) &= P_n^*(x,1),  & q_n^*(x) &= Q_n^*(x,1),\\
	\phi^*(x) & = k(x^2+1)/x, \text{ and } & \delta_n & = kp_n^*(1).
\end{align*}

Many of our results in this section exclude the case where $\phi$ is post-critically finite, and so we begin by showing that this case is rare.  

\begin{proposition}\label{pcf}
Let $K$ be a number field and $\phi(x) = k(x^2+b)/x$ with $k \in K^*$.  If $\phi$ is post-critically finite, then the standard \textup(absolute\textup) multiplicative height of $k$ is at most $2$.  In particular, there are only finitely many post-critically finite $\phi$ over any number field.  
\end{proposition}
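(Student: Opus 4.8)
The key point is that post-critical finiteness imposes strong integrality constraints on $k$, which we exploit via heights. The map $\phi(x) = k(x^2+b)/x$ has critical points $\pm\sqrt{b}$, and since $\phi$ is an even function composed with negation symmetry (recall $\phi(-x) = -\phi(x)$), the two critical orbits are related by $\gamma \mapsto -\gamma$. Concretely, $\phi(\sqrt{b}) = 2k\sqrt{b}$ and $\phi(-\sqrt{b}) = -2k\sqrt{b}$, so after one step both critical orbits live (up to sign) inside the single orbit of $2k\sqrt{b}$. Using the relation $p_n(\sqrt{b}) = b^{2^{n-1}} p_n^*(1)$ from~\eqref{pnrootb} (equivalently $q_n(\sqrt{b}) = b^{(2^n-1)/2} q_n^*(1)$), one sees that $\phi^n(\sqrt{b}) = \sqrt{b}\,\bigl(\phi^*\bigr)^{n-1}(2k) $ up to the obvious bookkeeping, so post-critical finiteness of $\phi$ is equivalent to finiteness of the forward orbit of $2k$ (or of $1$) under $\phi^*(x) = k(x^2+1)/x$. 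Thus it suffices to bound the height of $k$ under the hypothesis that the $\phi^*$-orbit of the critical point $1$ is finite, i.e. that the sequence $\delta_n/\epsilon_n = (\phi^*)^n(1)$ (in lowest terms over the ring of integers) is eventually periodic.

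The mechanism I would use is the standard canonical-height argument for preperiodic points, but made effective. A point $\alpha \in \PP^1(\bar K)$ is preperiodic for a rational map $\psi$ of degree $\geq 2$ if and only if its canonical height $\hat{h}_\psi(\alpha)$ vanishes; and $\hat{h}_\psi$ differs from the naive Weil height $h$ by a bounded amount, with the bound expressible in terms of the coefficients of $\psi$ (see the height machinery in \cite{jhsdynam}, Chapter 3). Here $\psi = \phi^*$ depends on $k$, so I must track how the comparison constant grows with $h(k)$. First I would write down, for $\psi = \phi^*$, the explicit inequality
\[
\bigl| \hat{h}_{\phi^*}(\alpha) - h(\alpha) \bigr| \leq c_1 h(k) + c_2
\]
for absolute constants $c_1, c_2$, obtained by estimating the local contributions from $\phi^*([X,Y]) = [k(X^2+Y^2),\,XY]$; the resultant of the coordinate forms is $kb$ (with $b=1$: just $k$), which controls the reduction. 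Applying this at $\alpha = 1$ (a preperiodic point, so $\hat{h}_{\phi^*}(1) = 0$, and $h(1) = 0$) gives no information directly; instead I would apply it at $\alpha = \phi^*(1) = 2k$, which is also preperiodic, hence $\hat{h}_{\phi^*}(2k) = 0$, yielding $h(2k) \leq c_1 h(k) + c_2$. Since $h(2k) \geq h(k) - h(2) = h(k) - \log 2$, this alone is not enough; I would instead iterate, or better, use that preperiodicity forces the \emph{entire} orbit to have bounded height: every $\phi^{*n}(2k)$ is preperiodic with canonical height zero, so $h(\phi^{*n}(2k)) \leq c_1 h(k) + c_2$ for all $n$, and there are only finitely many algebraic numbers of bounded height and bounded degree, which bounds the orbit length — but more usefully, I can extract the claimed bound $h(k) \leq \log 2$ by a cleaner route.

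The cleaner route, which I expect to be the actual argument: examine the reduction of $\phi^*$ modulo primes. If $\p$ is a prime of $K$ with $v_\p(k) \neq 0$, then $\phi^*$ has bad reduction at $\p$ (since $\mathrm{Res}(p^*,q^*) = k$ up to units), and I would show that the $\p$-adic behavior of the critical orbit $\delta_n = kp_n^*(1)$ forces $v_\p(\delta_n) \to \pm\infty$, contradicting finiteness of the orbit. Precisely: from the recursion $\delta_n = \delta_{n-1}^2 + \epsilon_{n-1}^2$, $\epsilon_n = \delta_{n-1}\epsilon_{n-1}/k$ of Theorem~\ref{finiteindex1}, with $\delta_1 = 2k^2$, $\epsilon_1 = k$, one computes valuations directly. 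If $v_\p(2) = 0$ and $v_\p(k) = m > 0$, then $v_\p(\delta_1) = 2m$, $v_\p(\epsilon_1) = m$, and inductively $v_\p(\delta_n) = v_\p(\epsilon_n^2)$ strictly grows (the $\delta_{n-1}^2$ term dominates or the $\epsilon_{n-1}^2$ term does, and one checks the valuations of $\delta_n, \epsilon_n$ form an increasing sequence), so $(\phi^*)^n(1) = \delta_n/\epsilon_n$ has $v_\p \to \infty$ or the coordinates never stabilize — either way the orbit is infinite. The same works if $v_\p(k) < 0$ by the symmetric computation, and the primes above $2$ are handled separately (allowing them to divide $k$, which is why the bound is $h(k) \leq \log 2 \cdot(\text{something})$ rather than $h(k) = 0$) by a more careful $2$-adic valuation count showing $v_\p(k)$ can be at most $v_\p(2)$ in absolute value. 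Summing over all places, $h(k) \leq \sum_{\p \mid 2} (\text{local bound}) = h(2)$, giving multiplicative height at most $2$. Finiteness then follows from Northcott's theorem applied over the fixed field $K$. \textbf{Main obstacle:} the careful $2$-adic valuation analysis — away from $2$ the valuations of $\delta_n$ grow cleanly, but at primes over $2$ the terms $\delta_{n-1}^2 + \epsilon_{n-1}^2$ can have cancellation, so one must show that even there $v_\p(k)$ cannot be too large without the orbit escaping, and pin down the exact constant. I would model this on the reduction-theory estimates in \cite{jhsdynam} together with the explicit recursions already recorded in Theorem~\ref{finiteindex1}.
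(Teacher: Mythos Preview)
Your reduction to $\phi^*(x)=k(x^2+1)/x$ and the non-archimedean escape idea are right and agree with the paper.  But there are two genuine problems.

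First, your detailed valuation computation treats the case $v_\p(k)=m>0$, which is both irrelevant and incorrect.  It is irrelevant because the height only sees places where $|k|_v>1$, i.e.\ $v_\p(k)<0$; when $v_\p(k)\ge 0$ there is nothing to prove.  It is incorrect because for $v_\p(k)>0$ the fixed point $\infty$ is $\p$-adically \emph{repelling} (multiplier $1/k$ has $|1/k|_\p>1$), and the orbit of $1$ does not escape: e.g.\ for $v_\p(k)=m>0$, $\p\nmid 2$, one finds $v_\p\bigl((\phi^*)^1(1)\bigr)=m$, $v_\p\bigl((\phi^*)^2(1)\bigr)=0$, $v_\p\bigl((\phi^*)^3(1)\bigr)=m$, \dots, so $v_\p$ does not tend to $\infty$.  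The relevant case $v_\p(k)<0$ (which you defer to a ``symmetric computation'') does work, and the paper does it more cleanly: for $|x|_\p>1$ the ultrametric inequality gives $|\phi^*(x)|_\p=|k|_\p|x|_\p>|x|_\p$, and $|\phi^*(1)|_\p=|2k|_\p>1$ whenever $\p\nmid 2$, so the orbit escapes.

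Second, and more seriously, your ``cleaner route'' says nothing about archimedean places, and without a bound there the height is unbounded (e.g.\ $k=3$ satisfies $|k|_\p\le 1$ at every finite $\p$).  The paper handles this by observing that $\infty$ is a fixed point of $\phi^*$ with multiplier $1/k$; if $|k|>1$ at a complex place, $\infty$ is attracting, and by a classical theorem in complex dynamics (Beardon, \emph{Iteration of Rational Functions}, Theorem~9.3.1) every attracting cycle attracts a critical point, so the critical orbit is infinite.  This is not a step your recursion argument or the canonical-height sketch supplies.

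Incidentally, your ``main obstacle'' (the $2$-adic case) is the easy part in the paper's argument: the same non-archimedean estimate gives $|\phi^*(1)|_\p=|2k|_\p$, which exceeds $1$ once $|k|_\p>|1/2|_\p=2$, yielding the local bound $|k|_\p\le 2$ at primes above $2$.
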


\begin{remark} This is best possible, since $k = \pm 1/2$ give  post-critically finite maps.  Using Proposition \ref{pcf}, one easily checks that these are the only $k \in \Q$ that give post-critically finite maps.  
\end{remark}

\begin{proof}  
Begin by noting that $\phi$ is conjugate to $k(x^2+1)/x$ over $\overline{K}$, and a map is post-critically finite if and only if all its conjugates are.   Therefore we may consider $\phi^*(x) = k(x^2+1)/x$.
The critical points of $\phi^*$ are $\pm 1$, and their orbits are interchanged by the involution $z \mapsto -z$.  So $\phi^*$ is post-critically finite if and only if the orbit of $z=1$ is finite.

Recall that the standard (absolute) multiplicative height of $k \in K$ is defined to be 
$$\left( \prod_{v \in M_K} \max\{1, |k|_v\}^{n_v} \right)^{1/[K:\Q]},$$
where $M_K$ is the set of places of $K$ and $n_v = [K_v : \Q_v]$ is the local degree of $v$.  Consider first an archimedean place $v$ of $K$, and for simplicity denote $|\cdot|_v$ by $|\cdot|$.  Suppose $|k| > 1$.  One checks that $\infty$ is a fixed point of $K$ with multiplier $1/k$, and hence is attracting.  By \cite[Thm 9.3.1]{beardon}, $\infty$ must attract a critical point monotonically (i.e. the critical point does not land on $\infty$), proving that $\phi$ is not post-critically finite.  

Now let $v$ be a non-archimedean place of $K$.  If $|k| > 1$, then for $|x| > 1$ we get
\begin{align*}
\left|\phi(x)\right| & = 
|k| \cdot \left|\frac{x^2+1}{x}\right| 
 = |k| \cdot  \frac{|x^2|}{|x|}  > |x|.
\end{align*}
It follows that $x$ cannot have a finite orbit under $\phi$.   Now $|\phi(1)| = |2k|$, and this equals $|k|$ provided that $v$ does not lie over $2$. If $v$ does divide $2$, then we still have $|2k| > 1$ provided that $|k| > |1/2| = 2$.  Hence if $\phi$ is post-critically finite, we must have $|k|_v \leq 1$ for each place $v \in M_K$ not over~$2$, and $|k|_v \leq 2$ for each place $v \in M_K$ over $2$.

Suppose that $\phi$ is post-critically finite, and assume $[K:\Q]=d$.  We have
\begin{equation*}
\prod_{v \in M_K} \max\{1, |k|_v\}^{n_v}\\
\leq 
\prod_{v\mid 2} 2 ^{n_v} = 2^d.
\end{equation*}
Taking the $d^{\text{th}}$ root gives the desired height bound of $2$.  
\end{proof}


\begin{lemma} \label{relprime}
Suppose that for some prime $\p$ of $\calO_K$, $v_\p(\delta_n) > 0$ and $v_\p(\delta_m) > 0$ for some $m \neq n$.  Then $v_\p(k) > 0$.  
\end{lemma}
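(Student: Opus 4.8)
The plan is to exploit the two recursions satisfied by $\delta_n = kp_n^*(1)$ and the auxiliary quantity $\epsilon_n$ introduced in Theorem~\ref{finiteindex1}, together with the fact that $0$ is pre-periodic. Recall $\phi^*(x)=k(x^2+1)/x$ has critical point $1$, and writing $\phi^{*n}(1)=\delta_n/\epsilon_n$ in lowest-common-denominator form one gets $\delta_1=2k^2$, $\epsilon_1=k$, and for $n\geq 2$
\[
\delta_n=\delta_{n-1}^2+\epsilon_{n-1}^2,\qquad \epsilon_n=\frac{\delta_{n-1}\epsilon_{n-1}}{k}.
\]
I would first record, by an easy induction, that $\epsilon_n = \delta_1\delta_2\cdots\delta_{n-1}/k^{n-2}$ (or an equivalent closed form), so that $k\epsilon_n$ and the product $\delta_1\cdots\delta_{n-1}$ differ only by a power of $k$. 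The key congruence to extract from the $\delta$-recursion is that $\delta_n\equiv\epsilon_{n-1}^2\pmod{\delta_{n-1}}$, and more generally $\delta_n\equiv \delta_j\cdot(\text{stuff}) + \epsilon_{n-1}^2 + \cdots$ modulo a prime dividing $\delta_j$; the cleanest form is that modulo a prime $\p$ dividing some $\delta_m$, the recursion forces $\delta_{m+1}\equiv\epsilon_m^2\pmod{\p}$, and then $\delta_{m+2}\equiv\delta_{m+1}^2+\epsilon_{m+1}^2\equiv\epsilon_m^4+(\delta_m\epsilon_m/k)^2\equiv\epsilon_m^4\pmod\p$, etc.

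**Main argument.** Suppose $\p$ divides both $\delta_n$ and $\delta_m$ with, say, $m<n$. Since $\epsilon_n$ is (up to a power of $k$) the product $\delta_1\cdots\delta_{n-1}$, a prime dividing $\delta_m$ with $m<n$ also divides $\epsilon_n$ and $\epsilon_{n-1}$ — unless $v_\p(k)>0$, which is exactly the conclusion we want, so assume $v_\p(k)=0$. Then $v_\p(\epsilon_j)>0$ for all $j>m$. Now run the $\delta$-recursion forward from level $m$: since $v_\p(\delta_m)>0$ and $v_\p(\epsilon_m)>0$ (the latter because $\epsilon_m$ contains the factor $\delta_1\cdots\delta_{m-1}\cdot$— wait, this needs care at $j=m$ itself). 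The right way to phrase it: for $j$ with $m<j\leq n$ we have $\delta_j=\delta_{j-1}^2+\epsilon_{j-1}^2$, and I claim by induction that $v_\p(\delta_j)>0$ and $v_\p(\epsilon_j)>0$ for all such $j$. The base case $j=m+1$: $v_\p(\epsilon_m)$ — here $\epsilon_m=\delta_1\cdots\delta_{m-1}/k^{m-2}$ does NOT obviously involve $\delta_m$, so I instead use $\delta_{m+1}=\delta_m^2+\epsilon_m^2$; this is $>0$ in valuation only if $v_\p(\epsilon_m)>0$. So the real content is: once $\p\mid\delta_m$ and $\p\nmid k$, the recursion $\epsilon_{m+1}=\delta_m\epsilon_m/k$ gives $v_\p(\epsilon_{m+1})>0$, hence $\delta_{m+2}=\delta_{m+1}^2+\epsilon_{m+1}^2$, and I need $v_\p(\delta_{m+1})>0$ too. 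But $\delta_{m+1}=\delta_m^2+\epsilon_m^2$, so $v_\p(\delta_{m+1})>0 \iff v_\p(\epsilon_m)>0$. This is where pre-periodicity of $0$ enters: $0$ maps to a fixed point of $\phi$, which constrains how the critical orbit sequence $\delta_j$ relates to itself, and should force $\p\mid\delta_m\Rightarrow \p\mid\epsilon_m$ or $\p\mid k$ via the closed form for $\epsilon_m$. Concretely: $\epsilon_m=\delta_1\cdots\delta_{m-1}/k^{m-2}$ only involves $\delta_1,\dots,\delta_{m-1}$, so if $\p\mid\delta_m$ but $\p\nmid\delta_i$ for $i<m$, then $\p\nmid\epsilon_m$. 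I therefore argue instead in the contrapositive on the *first* index: let $m$ be minimal with $v_\p(\delta_m)>0$. Then $\p\nmid\delta_i$ for $i<m$, so (assuming $\p\nmid k$, else done) $\p\nmid\epsilon_m$, hence from $\delta_{m+1}=\delta_m^2+\epsilon_m^2$ we get $v_\p(\delta_{m+1})=0$; then $v_\p(\epsilon_{m+1})=v_\p(\delta_m\epsilon_m/k)>0$ but $v_\p(\delta_{m+2})=v_\p(\delta_{m+1}^2+\epsilon_{m+1}^2)=0$ since the first term is a $\p$-unit; and inductively $v_\p(\delta_j)=0$ for all $j>m$. Hence $\p$ divides $\delta_m$ for exactly one index $m$, contradicting $v_\p(\delta_n)>0$, $v_\p(\delta_m)>0$ with $m\neq n$. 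Therefore $v_\p(k)>0$.

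**The main obstacle.** The delicate point is the bookkeeping of valuations of $\epsilon_j$ versus $\delta_j$ as the recursion propagates, and in particular pinning down the closed form $\epsilon_n = \delta_1\cdots\delta_{n-1}/k^{n-2}$ (and checking it really holds with that exact power of $k$, starting from $\epsilon_1=k$, $\epsilon_2=\delta_1\epsilon_1/k=\delta_1$, $\epsilon_3=\delta_2\epsilon_2/k=\delta_1\delta_2/k$, etc.). The subtlety is that $p^*_n(1)$ and $q^*_n(1)$ may share common factors, so I should either work with the unreduced homogeneous iterates $P_n^*(1,1), Q_n^*(1,1)$ and track the good-reduction statement ($\Res(q^*,p^*)=k$, so only $\p\mid k$ can be a common prime of $p_n^*(1)$ and $q_n^*(1)$, cf.\ the argument with $\Res(q,p)$ in Section~\ref{galgen}), or verify directly that the stated recursion for $(\delta_n,\epsilon_n)$ in Theorem~\ref{finiteindex1} is the correct one. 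Once the closed form for $\epsilon_n$ and the good-reduction fact for $\phi^*$ are in hand, the induction above — minimal index $m$, then $v_\p(\delta_j)=0$ for all $j>m$ — is routine. The role of pre-periodicity of $0$, alluded to in the discussion after Theorem~\ref{finiteindex1}, is precisely this good-reduction / relative-primality input that makes $\epsilon_m$ a $\p$-unit once $m$ is the first index with $\p\mid\delta_m$.
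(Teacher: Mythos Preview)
Your argument is essentially the same as the paper's: take the minimal index $m$ with $v_\p(\delta_m)>0$, use the product formula $\epsilon_m=\delta_1\cdots\delta_{m-1}/k^{m-2}$ (the paper uses the equivalent $Q_n^*(1,1)=P_1^*(1,1)\cdots P_{n-1}^*(1,1)$) to see $\epsilon_m$ is a $\p$-unit, and then propagate the recursion forward to get $v_\p(\delta_j)=0$ for all $j>m$. The only notational difference is that the paper works with the unreduced pair $(P_n^*(1,1),Q_n^*(1,1))$ rather than $(\delta_n,\epsilon_n)=(kP_n^*(1,1),kQ_n^*(1,1))$, which is immaterial once $v_\p(k)=0$.

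There is one small gap: you jump straight from ``if $v_\p(k)>0$ we're done'' to ``assume $v_\p(k)=0$,'' skipping the case $v_\p(k)<0$. The paper disposes of this by observing that $p_n^*(1)\in\Z[k]$ has no constant term, so $v_\p(k)<0$ forces $v_\p(p_n^*(1))<0$ and hence $v_\p(\delta_n)<0$, contradicting the hypothesis. You should insert a line to this effect (or the easy direct induction showing $v_\p(\delta_n)<0$ for all $n$ when $v_\p(k)<0$); otherwise your closed form $\epsilon_m=\delta_1\cdots\delta_{m-1}/k^{m-2}$ does not immediately give $v_\p(\epsilon_m)=0$, since the negative power of $k$ could push the valuation up.
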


\begin{proof}
Without loss of generality, we may assume $n < m$ and that $n$ is the smallest positive integer satisfying $v_\p(\delta_n) > 0$.  The rough idea is that $\phi$ maps $0$ to $\infty$, which is a fixed point.  Thus if $\phi^n(1) \equiv 0 \bmod{\p}$, then $\phi^m(1) \not\equiv 0 \bmod{\p}$, and $\p$ cannot divide both $p_n(1)$ and $p_m(1)$.  

We are given that $v_\p(\delta_n) = v_\p(kp^*_n(1)) > 0$.  From the recursion, we see that $p^*_n(1)$ is a polynomial in $k$ with integral coefficients and no constant term;  hence $v_\p(k) < 0$ implies $v_\p(p^*_n(1)) < 0$.  It follows that either $v_\p(k) > 0$ and we're done or $v_\p(k) = 0$ and $v_\p(p^*_n(1)) > 0$.  We assume the latter scenario and derive a contradiction.  

We have $v_\p(P^*_n(1,1)) > 0$, and thus 
$Q^*_{n+1}(1,1) = P^*_n(1,1)Q^*_n(1,1) \equiv 0 \bmod{\p}$ and 
$$
P^*_{n+1}(1,1) = k\left(P^*_n(1,1)^2 + Q^*_n(1,1)^2\right) 
\equiv kQ^*_n(1,1)^2 \bmod{\p}.
$$ 
Induction gives $Q^*_m(1,1) \equiv 0 \bmod{\p}$ and 
$$
P^*_m(1,1) \equiv k^{2^{m-n}-1}Q^*_n(1,1)^{2^{m-n}} \bmod{\p}.
$$  
Now $Q^*_n(1,1) = P^*_1(1,1)P^*_2(1,1) \cdots P^*_{n-1}(1,1)$, so by the minimality of $n$ we have 
$Q^*_n(1,1) \not\equiv 0 \bmod{\p}$.  By assumption we have $k \not\equiv 0 \bmod{\p}$, and so it follows that $p^*_m(1) = P^*_m(1,1) \not\equiv 0 \bmod{\p}$.  This contradicts our supposition that $v_\p(\delta_m) > 0$.
\end{proof}

\begin{theorem} \label{finiteindex}
Let $\phi$ be defined as in \eqref{cmphi}.   Suppose that none of $-b$, $\delta_n$, and $-b\delta_n$ is a square in $K$ for $n \geq 2$, and also assume that $\phi$ is not post-critically finite.  Then $G_\infty$ has finite index in $C_{\infty}$.  
\end{theorem}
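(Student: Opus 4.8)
The plan is to show that $[K_n:K_{n-1}]$ attains its maximal value $2^{2^{n-2}}$ for all but finitely many $n$; this is equivalent to $[C_\infty:G_\infty]<\infty$. Indeed, since $G_{n-1}$ is the image of $G_n$ under restriction, one has $\ker(G_n\to G_{n-1})\hookrightarrow\ker(C_n\to C_{n-1})$ and hence, using Corollary~\ref{autpropcor} (the polynomials $p_n$ are separable because $\phi$ is not post-critically finite),
\[
[C_n:G_n]=[C_1:G_1]\prod_{j=2}^{n}\frac{\#\ker(C_j\to C_{j-1})}{[K_j:K_{j-1}]},
\]
a nondecreasing quantity, bounded in $n$ precisely when all but finitely many factors equal $1$. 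The first step of the argument is to invoke Theorem~\ref{irredthm}: the hypothesis that none of $-b$, $\delta_n$, $-b\delta_n$ is a square in $K$ forces $p_n$ irreducible for every $n$, so Theorem~\ref{specmaxcor1} applies at every level. It therefore suffices to produce, for all but finitely many $n$, a prime $\p$ of $K$ with $v_\p(\delta_n)$ odd, $v_\p(\delta_j)=0$ for $1\le j<n$, and $v_\p(k)=v_\p(b)=v_\p(2)=0$.

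Let $S$ be the finite set of primes of $K$ dividing $2kb$. The key point is that, once $\p\notin S$, the remaining conditions are automatic. If $v_\p(k)=0$ then $\delta_j=kp^*_j(1)$ is a $\p$-adic integer (as $p^*_j(1)\in\Z[k]$), so $v_\p(\delta_j)\ge 0$; and if $v_\p(\delta_j)>0$ for some $j<n$ while $v_\p(\delta_n)>0$, then Lemma~\ref{relprime} forces $v_\p(k)>0$, a contradiction. So any $\p\notin S$ of odd valuation in $\delta_n$ is automatically admissible, and the task reduces to showing: \emph{for all but finitely many $n$, $\delta_n$ has a prime divisor outside $S$ of odd multiplicity.} To handle this I would use the congruence $\delta_n\equiv w_{n-1}^2+1\pmod{K^{*2}}$, where $w_m=(\phi^*)^m(1)$; this follows from $p^*_n=k\bigl((p^*_{n-1})^2+(q^*_{n-1})^2\bigr)$ by dividing out the square $\bigl(kq^*_{n-1}(1)\bigr)^2$ (note $q^*_{n-1}(1)\ne 0$ and $w_m\ne 0,\infty$ for all $m$, since $\phi$ is not post-critically finite). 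Thus we must show that $w_{n-1}^2+1$ has a prime of odd multiplicity outside $S$ for all but finitely many $n$. A short reduction-mod-$\p$ argument — using that $\phi^*$ sends $\pm i\mapsto 0\mapsto\infty\mapsto\infty$, so the orbit of $1$ is congruent to $\pm\sqrt{-1}$ modulo a prime $\p\notin S$ for at most one index — shows the numbers $w_m^2+1$ are pairwise coprime away from $S$ (a restatement of Lemma~\ref{relprime}), so such a prime is automatically primitive for $(\delta_n)$.

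Now suppose, for contradiction, that infinitely many $n$ violate the conclusion. For each such $n$ every prime outside $S$ divides $\delta_n$ to even order, so the class $\delta_n K^{*2}$ lies in the subgroup $\widetilde H_S\subseteq K^*/K^{*2}$ of classes with even valuation at all primes outside $S$; this group is finite by the Dirichlet $S$-unit theorem together with finiteness of $\mathrm{Cl}(K)$. Pigeonhole on the infinite bad set then produces a fixed class $\bar c\in\widetilde H_S$ and infinitely many $n$ with $\delta_n\equiv c\pmod{K^{*2}}$, equivalently with $w_{n-1}^2+1\equiv c\pmod{K^{*2}}$. That is, for infinitely many $m$ there exists $y_m\in K$ with $w_m^2+1=c\,y_m^2$: the critical orbit $\{w_m\}$ meets, at infinitely many stages, the fixed genus-zero curve $\mathcal{C}_c:\ cY^2=X^2+1$ over $K$.

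The main obstacle is to derive a contradiction from this, and it is exactly here that the non-post-critically-finite hypothesis must be used. Since $1$ is not preperiodic, the canonical height $\widehat{h}_{\phi^*}(1)$ is positive and $h(w_m)$ grows like $2^m$, so $\{w_m\}$ is a very sparse sequence; one must argue that it cannot have infinitely many terms constrained to a fixed genus-zero curve. The cleanest route is Siegel's theorem on integral points in orbits (see \cite{jhsdynam}): passing to $K(i)$ and factoring $w_m^2+1=(w_m+i)(w_m-i)$, the relation $w_m^2+1=c\,y_m^2$ (for infinitely many $m$) would force $w_m$ to be $S'$-integral relative to one of the non-exceptional points $\pm i$ for infinitely many $m$, contradicting the finiteness statement. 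Granting this, $\widetilde H_S$ is escaped for all large $n$, which yields the required primitive prime divisor of odd multiplicity, hence $[K_n:K_{n-1}]=2^{2^{n-2}}$ for all but finitely many $n$, and therefore $[C_\infty:G_\infty]<\infty$. I expect the careful treatment of this last step — in particular making the "exceptional configurations'' (where the orbit is pinned to a conic, or where a primitive prime divisor occurs only to even order) explicit and checking they are eliminated for $n$ large by the growth/Siegel input — to be the technical heart of the proof.
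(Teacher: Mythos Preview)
Your overall architecture matches the paper's: irreducibility via Theorem~\ref{irredthm}, then Theorem~\ref{specmaxcor1} at each level, with the reduction (via Lemma~\ref{relprime} and an $S$-unit pigeonhole argument) to showing that for all but finitely many $n$ the class of $\delta_n$ in $K^*/K^{*2}$ lies outside a fixed finite set. The paper even enlarges $S$ so that $\calO_{K,S}$ is a PID, which is exactly your $\widetilde H_S$ step in slightly different clothing. So far so good.

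The substantive gap is the step you yourself flag as ``the technical heart'': deducing a contradiction from infinitely many $m$ with $w_m^2+1\in cK^{*2}$. Your proposed route via Siegel/Silverman for integral points in orbits does not go through as stated. Membership in the conic $cY^2=X^2+1$ gives, over $K(i)$ and for $\p\notin S'$, only \emph{parity} information on $v_\p(w_m\pm i)$ (since $\gcd(w_m+i,w_m-i)\mid 2i$), not a bound; so it does not force $w_m$ to be $S'$-integral relative to $\pm i$, and Silverman's finiteness theorem for $S$-integral points in orbits does not apply. A genus-zero conic can certainly carry infinitely many $K$-points of rapidly growing height, so sparseness of the orbit alone is not enough.

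The paper's resolution is short and worth knowing: instead of stopping at one iterate (which lands you on a conic), unwind two more. Using $p_n^*(1)=p^*\!\bigl((\phi^*)^{n-1}(1)\bigr)\cdot q_{n-1}^*(1)^2$ twice more, one gets that $c\delta_n\in K^{*2}$ for $n\ge 4$ forces the point $x=(\phi^*)^{n-3}(1)$ to lie on
\[
y^2 \;=\; c\bigl(p_2^*(x)^2+q_2^*(x)^2\bigr)\;=\;c\,k^{-1}p_3^*(x).
\]
By Corollary~\ref{disccor} (and the standing hypotheses, which guarantee $\delta_1,\delta_2,\delta_3\ne 0$), the right-hand side is a squarefree degree-$8$ polynomial, so this is a smooth genus-$3$ curve. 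Faltings' theorem then gives only finitely many rational points, hence only finitely many $n$ with $c\delta_n$ a square, and the pigeonhole contradiction completes the proof. In short: your outline is correct, but the finiteness input you need is Faltings on the iterated (genus-$3$) curve, not Siegel on the conic.
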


\begin{remark}
From the proof below, it follows that only finitely many  of the numbers $\delta_n$ and $-b \delta_n$ can be squares.  However, this is not enough to ensure that $p_n(x)$ is irreducible for all $n$, and failure of this irreducibility provides an obstacle to showing $[C_{\infty} : G_{\infty}] < \infty$.  
\end{remark}

\begin{proof}
We first claim that for any $c \in K^*$, $c\delta_n$ is a square in $K$ for at most finitely many~$n$.  Note that $c\delta_n = ckp_n^*(1)$.  Let $\phi^*(x) = k(x^2 + 1)/x$  and apply equation \eqref{rec2}  to see that $ckp_n^*(1)$ is a square if and only if 
\[
	ck \cdot p^*\left(\phi^{*\, {n-1}}(1)\right) \in K^{*2}.
\]
 Without loss of generality, we may take $n\geq 4$, and thus rewrite
  \[
	ck \cdot p^*\left(\phi^{*\, {n-1}}(1)\right) =   
	ck \cdot p^*\left(\phi^{*2}\circ \phi^{*\, n-3}(1)\right).
\]
%
 It follows that $c\delta_n$ is a square if and only if the curve $ck \cdot p^*(\phi^{*2}(x)) = y^2$ has a $K$-rational point $(x,y)$ with $x = (\phi^*)^{n-3}(1)$.  Since 
\[
	p^*(\phi^{*2}(x)) = k  \left(p_2^*(x)^2 + q_2^*(x)^2\right) / q_2^*(x)^2 ,
\]
 this is equivalent to the curve
\begin{equation} \label{curve}
C: y^2 = c \left(p_2^*(x)^2 + q_2^*(x)^2 \right)
\end{equation}
having a rational point with $x = (\phi^*)^{n-3}(1)$.  The right-hand side of \eqref{curve} is simply $c k^{-1}p_3^*(x)$, whose discriminant, by Corollary \ref{disccor}, is divisible only by $c$, $k$, $b$, $p_1^*(1), p_2^*(1),$ and $p_3^*(1)$.   None of these is zero because none of $\delta_1, \delta_2, $ and $\delta_3$ is a square in $K$ by hypothesis.  
Thus the right-hand side of \eqref{curve} has distinct roots, and hence the genus of $C$ is $3$.  It follows from Faltings' Theorem \cite[Part E]{jhsdioph} that $C$ has only finitely many rational points, and thus $c\delta_n$ is a square for only finitely many $n$.  


To prove the Theorem, note that the hypotheses on $-b$, $-b\delta_n$ and $\delta_n $ allow us to apply Theorem~\ref{irredthm} to show that $p_n(x)$ is irreducible for all $n$.  We now wish to apply Theorem ~\ref{specmaxcor1}.  
Let $S$ be a finite set of places of $K$, including all places dividing $k,b,$ or $2$, and all archimedean places.  Expand $S$ further, if necessary, so that the ring $\calO_{K,S}$ of $S$-integers is a principal ideal domain.  Let $U_{K,S}$ denote the multiplicative group of $S$-units.  Note that since $\delta_n \in \Z[k^2]$, there exists $a_n \in K$ with $a_n^2 \delta_n \in \calO_K$.  Since $\calO_{K,S}$ is a UFD, we may write for each $n$, 
\begin{equation} \label{dec}
a_n^2 \delta_n = u \beta^2 \prod_{i = 1}^{j_n} \pi_i,
\end{equation}
with $u \in U_{K,S}/U_{K,S}^2$, $\beta \in \calO_{K,S}$, and $\pi_i \in \calO_{K,S}$ irreducible, and this decomposition is unique.  We permit the product on the right of \eqref{dec} to be empty, and say $j_n = 0$ in this case.   

By Dirichlet's Theorem for $S$-units \cite[p. 174]{frotay}, $U_{K,S}/U_{K,S}^2$ is a finite group, and we let $\Sigma$ consist of a set of coset representatives.  
Suppose that there are infinitely many $n$ for which $j_n = 0$.  Because $\Sigma$ is finite, there is a product $c$ of elements in $\Sigma$ with $ca_n^2\delta_n$ a square in $\calO_{K,S}$ for infinitely many $n$.  This contradicts the conclusion of the previous paragraph.  

It follows that for all but finitely many $n$, there must be at least one $\pi_i$ in \eqref{dec}.  Thus setting $\p_i = (\pi_i) \cap \calO_K$ we have 
\[
v_{\p_i}(\delta_n) \text{ odd, and } 
v_{\p_i}(k) = v_{\p_i}(b) = v_{\p_i}(2) = 0.
\]
  Because
$v_{\p_i}(k) = 0$, Lemma \ref{relprime} implies that $\p_i$ divides at most one $\delta_n$.  Theorem \ref{specmaxcor1} then applies to complete the proof. 
\end{proof}


 In light of Theorem \ref{finiteindex}, we now study the quantities $-b$, $\delta_n$, and $-b\delta_n$.  We begin with a fundamental result on the polynomials $P_n(X,Y)$ and $Q_n(X,Y)$.



\begin{lemma}\label{an&bn}
Let $S_n, T_n \in \Z[k,X,Y]$ be the polynomials not divisible by $k$ that satisfy $P_n = k^{s(n)}S_n$ and 
$Q_n = k^{t(n)}T_n$ respectively, for some $s(n), t(n) \in \Z$.  Then we have 
\begin{align*}S_n
		&= \begin{cases}
			S_{n-1}^2 + bT_{n-1}^2 & \text{if $n$ is odd}\\
			k^2S_{n-1}^2 + bT_{n-1}^2 & \text{if $n$ is even},\\
		\end{cases}\\
T_n
		&= S_{n-1}T_{n-1}, \\
	s(n) 
		&= \frac{1}{3}\left(2^n-(-1)^n\right), \qquad \text{and}\\
	t(n)
		&=
			\begin{cases}
				s(n)-1 &\text {if $n$ is odd}\\
				s(n) &\text{if $n$ is even.}
			\end{cases}
\end{align*}
Moreover, for any $n$, $S_n$ and $T_n$ are homogeneous in $X$ and $Y$, and relatively prime as polynomials in $X$ and $Y$ with coefficients in $\Z[k]$.
\end{lemma}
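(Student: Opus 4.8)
\emph{Proof strategy.} The plan is to prove all of the assertions simultaneously by induction on $n$, working in the polynomial ring $R=\Z[b,k,X,Y]$ with $b$ and $k$ regarded as independent indeterminates (the statements for specific $b,k\in K$ then follow by specialization). The only inputs are the recursions recorded just before the lemma, namely $P_1=k(X^2+bY^2)$, $Q_1=XY$ and, for $n\ge 2$,
\[
P_n=k\bigl(P_{n-1}^2+bQ_{n-1}^2\bigr),\qquad Q_n=P_{n-1}Q_{n-1},
\]
together with the fact that $R$ is a UFD in which $k$ is prime. The base case $n=1$ is immediate: $P_1=k^1S_1$ with $S_1=X^2+bY^2$, $Q_1=k^0T_1$ with $T_1=XY$, neither divisible by $k$, and $s(1)=\tfrac13(2-(-1))=1$, $t(1)=0=s(1)-1$, with $S_1,T_1$ homogeneous of degree $2$ and coprime.

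For the inductive step I would first dispose of $Q_n$ and $T_n$: assuming $P_{n-1}=k^{s(n-1)}S_{n-1}$ and $Q_{n-1}=k^{t(n-1)}T_{n-1}$ with $k\nmid S_{n-1}T_{n-1}$, the relation $Q_n=P_{n-1}Q_{n-1}$ gives $Q_n=k^{s(n-1)+t(n-1)}S_{n-1}T_{n-1}$ with $k\nmid S_{n-1}T_{n-1}$ since $k$ is prime; hence $T_n=S_{n-1}T_{n-1}$ and $t(n)=s(n-1)+t(n-1)$. For $P_n$, I substitute into $P_n=k(P_{n-1}^2+bQ_{n-1}^2)$ and use the inductive description of $t(n-1)$ in terms of $s(n-1)$ to pull out the exact power of $k$: when $n$ is odd (so $n-1$ is even and $t(n-1)=s(n-1)$) one gets $P_n=k^{2s(n-1)+1}\bigl(S_{n-1}^2+bT_{n-1}^2\bigr)$, and when $n$ is even (so $t(n-1)=s(n-1)-1$) one gets $P_n=k^{2s(n-1)-1}\bigl(k^2S_{n-1}^2+bT_{n-1}^2\bigr)$. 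To see that the bracketed factor really is $S_n$, i.e.\ that $k$ does not divide it, I would reduce modulo $k$: the bracket becomes $\overline{S_{n-1}}^{2}+b\,\overline{T_{n-1}}^{2}$ (for $n$ odd) or $b\,\overline{T_{n-1}}^{2}$ (for $n$ even) in $\Z[b,X,Y]$, and this is nonzero --- for $n$ even because $\overline{T_{n-1}}\ne 0$, and for $n$ odd because $\overline{S_{n-1}}^{2}=-b\,\overline{T_{n-1}}^{2}$ would force $-b$ to be a square in the rational function field $\Q(b,X,Y)$, which it is not. This yields the stated recursions for $S_n$ and the exponent identities $s(n)=2s(n-1)+(-1)^{n+1}$ and $t(n)=s(n-1)+t(n-1)$.

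It then remains to solve these linear recursions and to record the structural facts. For $s$ it suffices to verify that $\tfrac13(2^n-(-1)^n)$ satisfies $s(n)=2s(n-1)+(-1)^{n+1}$ with $s(1)=1$, a one-line check; feeding this into $t(n)=s(n-1)+t(n-1)$ and separating parities gives $t(n)=s(n)-1$ for $n$ odd and $t(n)=s(n)$ for $n$ even. Homogeneity is immediate from the recursions, since $S_1,T_1$ are homogeneous of degree $2$ and each step squares homogeneous polynomials of degree $2^{n-1}$, so $S_n,T_n$ are homogeneous of degree $2^n$ in $X,Y$. For the coprimality of $S_n$ and $T_n$ over $\Z[k]$ --- equivalently over $\Z[b,k]$, since $R$ is a UFD --- I would induct using $T_n=S_{n-1}T_{n-1}$, so that it is enough to show $\gcd(S_n,S_{n-1})=1$ and $\gcd(S_n,T_{n-1})=1$. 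Reducing the recursion for $S_n$ modulo $S_{n-1}$ gives $S_n\equiv bT_{n-1}^2$, so any common divisor of $S_n$ and $S_{n-1}$ divides $\gcd(bT_{n-1}^2,S_{n-1})$, which is $1$ because $b\nmid S_{n-1}$ (reducing the recursion for $S_{n-1}$ modulo $b$ gives a nonzero polynomial, since $S_1\equiv X^2$) and $\gcd(S_{n-1},T_{n-1})=1$. Reducing modulo $T_{n-1}$ gives $S_n\equiv k^{\epsilon}S_{n-1}^2$ with $\epsilon\in\{0,2\}$, so $\gcd(S_n,T_{n-1})$ divides $\gcd(k^{\epsilon}S_{n-1}^2,T_{n-1})=1$ since $k\nmid T_{n-1}$ and $\gcd(S_{n-1},T_{n-1})=1$.

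The only step I expect to require genuine care is the non-divisibility of $S_n$ by $k$ in the odd case, where the cancellation $S_{n-1}^2+bT_{n-1}^2\equiv 0\pmod k$ must be ruled out using that $-b$ is not a square; the rest is purely formal bookkeeping of exponents, and the delicate parity case distinctions between even and odd $n$ must simply be carried along carefully throughout the induction.
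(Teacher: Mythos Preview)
Your proposal is correct and follows essentially the same inductive scheme as the paper: substitute the factorizations $P_{n-1}=k^{s(n-1)}S_{n-1}$, $Q_{n-1}=k^{t(n-1)}T_{n-1}$ into the recursions for $P_n,Q_n$, extract the power of $k$ according to the parity of $n$, verify the exponent formulas, and prove coprimality by observing that a common factor of $S_n$ and $T_n=S_{n-1}T_{n-1}$ would have to divide exactly one of $S_{n-1},T_{n-1}$ and hence miss the other summand in the recursion for $S_n$.

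Two small differences are worth noting. First, you work systematically in $\Z[b,k,X,Y]$ with $b$ an indeterminate, which is cleaner than the paper's $\Z[k,X,Y]$ (where the role of $b$ is left slightly ambiguous); this lets you dispose of the constant $b$ as a potential common factor explicitly, while the paper restricts attention to common factors of positive degree in $X,Y$ and so sidesteps that issue. Second, and more substantively, in the odd-$n$ case you actually justify why $k\nmid S_{n-1}^2+bT_{n-1}^2$ via the observation that otherwise $-b$ would be a square in $\Q(b,X,Y)$; the paper simply writes ``as in the case of $n$ even,'' but the even case is genuinely easier (one summand is visibly divisible by $k^2$), so your argument fills a small gap there. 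Neither difference changes the overall architecture of the proof.
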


\begin{proof}
We proceed by induction.  In this case it will be convenient for us to start with $n = 0$, in which case we put $\phi^0([X,Y]) = [X,Y]$.  This gives 
\[
	S_0 = X, \quad T_0 = Y, \text{ and } \quad s(0) = t(0) = 0.
\]  
For $n = 1$, we have $\phi([X,Y]) = [k(X^2+bY^2), XY]$, showing that 
\[
	S_1 = S_0^2 + bT_0^2, \quad T_1 = S_0T_0, \quad s(1) = 1,  \text{ and } \quad t(1) = 0,
\]
 which agree with the statements in the Lemma.  

For $n=2$, we have
\begin{align*}
	P_2(X,Y) 
		&= k\left(k^2(X^2+bY^2)^2+b(XY)^2\right) \text{ and }\\
	Q_2(X,Y)
		&= k(X^2+bY^2)XY,
\end{align*}
so that 
\[
	S_2 = k^2S_1^2 + bT_1^2, 
	\quad 
	T_2 = S_1T_1, \text{ and } 
	\quad s(2) = t(2) = 1,
\]
 which again agree with the statements in the Lemma.  

Now suppose that $n$ is even and that the statement of the Lemma holds for $n-1$, so in particular $t(n-1)=s(n-1) - 1$.  Then
\begin{align*}
	P_{n} 
		= k\left( P_{n-1}^2+ b Q_{n-1}^2\right) &=k\left(k^{2s(n-1)}S_{n-1}^2 + bk^{2t(n-1)}T_{n-1}^2\right) \\
		&=k \cdot k^{2s(n-1)-2}\left(k^2S_{n-1}^2 + bT_{n-1}^2\right).
\end{align*}
Since $k$ does not divide $T_{n-1}$, it also does not divide $k^2S_{n-1}^2 + bT_{n-1}^2$, showing that 
$S_n = k^2S_{n-1}^2 + bT_{n-1}^2$.  Thus 
\begin{align*}
s(n) &= 2s(n-1)-1  = \frac{2}{3}\left(2^{n-1}-(-1)^{n-1}\right) - 1 &\text{ (by induction)}\\
 & = \frac{1}{3}\left(2^{n}+2-3\right) &\text{ (since $n-1$ is odd)}\\
 &= \frac{1}{3}\left(2^{n} - (-1)^n\right).
\end{align*}

We also have
$$
	Q_{n}
		= P_{n-1}Q_{n-1}
		= \left(k^{s({n-1})} S_{n-1} \right) 
			\left(k^{t(n-1)}T_{n-1}\right)
		= k^{2s(n-1) - 1} S_{n-1}T_{n-1}.
$$
Since $k$ divides neither $S_{n-1}$ nor $T_{n-1}$, it also does not divide $S_{n-1}T_{n-1}$, showing that $T_n = S_{n-1}T_{n-1}$.  Thus $t(n) = 2s(n-1) - 1 = s(n)$. 		

Now suppose that $n$ is odd and that the statement of the Lemma holds for $n-1$, so in particular $t(n-1)=s(n-1)$.  Then
\begin{align*}
	P_{n} 
		= k\left(P_{n-1}^2+ bQ_{n-1}^2\right)
		&=k\left(k^{2s(n-1)}S_{n-1}^2 + bk^{2t(n-1)}T_{n-1}^2\right)\\
		&=k \cdot k^{2s(n-1)}\left(S_{n-1}^2 + bT_{n-1}^2\right).
\end{align*}
As in the case of $n$ even, we have $S_n = k^2S_{n-1}^2 + bT_{n-1}^2$.  Thus 
\begin{align*}
s(n) &= 2s(n-1)+1 = \frac{2}{3}\left(2^{n-1}-(-1)^{n-1}\right) + 1 & \text{ (by induction)}\\
&=\frac{1}{3}\left(2^{n}-2+3\right) & \text{ (since $n-1$ is even)}\\
&= \frac{1}{3}\left(2^{n} - (-1)^n\right).
\end{align*}

We also have
$$
	Q_{n}
		= P_{n-1}Q_{n-1}
		= \left(k^{s({n-1})} S_{n-1} \right) 
			\left(k^{t(n-1)}T_{n-1}\right)
		= k^{2s(n-1)} S_{n-1}T_{n-1}.
$$
As in the case of $n$ even, it follows that $T_n = S_{n-1}T_{n-1}$.  Thus $t(n) = 2s(n-1) = s(n) - 1$.    
 	
It remains to show that $S_n$ and $T_n$ are relatively prime as homogeneous polynomials in $X$ and $Y$ with coefficients in $\Z[k]$.  Assume inductively the same statements hold for $S_{n-1}$ and $T_{n-1}$.  The homogeneity of $S_n$ and $T_n$ follows immediately from the recursions in the Lemma, which have already been established.  Let $F$ be an irreducible non-constant homogeneous polynomial in $X$ and $Y$ with coefficients in $\Z[k]$.  If $F$ divides $T_n$, then $F$ must divide either $S_{n-1}$ or $T_{n-1}$, but cannot divide both since $S_{n-1}$ and $T_{n-1}$ are relatively prime.  From the formula for $S_n$ in the Lemma it follows that $F$ cannot divide $S_n$, regardless of the parity of $n$.    
\end{proof}

For the remainder of this section we assume that $b=1$, as in Conjecture~\ref{mainconj}.  We thus have $\phi^* = \phi$, $p_n^* = p_n$, and $q_n^* = q_n$. 
We put Lemma \ref{an&bn} to use to study the $\delta_n$, which will allow us to apply Theorem \ref{finiteindex} in the case where $K$ is real, since $\delta_n > 0$ and thus $-\delta_n$ cannot be a square.  Before proceeding, we note that 
\[
\delta_n = kP_n(1,1) = k^{s(n)+1}S_n(1,1)
\]
 by Lemma \ref{an&bn}.  Since $s(n) + 1$ is always even, we have that $\delta_n$ is a square in $K$ if and only if $S_n(1,1)$ is a square in $K$.  To make this a bit more concrete, here are the first few $P_n(1,1)$ and $Q_n(1,1)$, with corresponding $S_n(1,1)$ and $T_n(1,1)$ easy to read off.  
\begin{align*}
P_1(1,1)  & =  2k,  & P_2(1,1) & =  k(4k^2 + 1), & P_3(1,1) & = k^3(16k^4 + 8k^2 + 5), \\
Q_1(1,1) &  =  1,    & Q_2(1,1) &  =  2k, & Q_3(1,1), &= k^2(8k^2 + 2), 
\end{align*}
\begin{align*}
P_4(1,1) & = k^5(256k^{10} + 256k^8 + 224k^6 + 144k^4 + 57k^2 + 4), \text{ and} \\
Q_4(1,1) & = k^5(128k^6 + 96k^4 + 56k^2 + 10).
\end{align*}

\begin{lemma} \label{five}
Suppose that $b  = 1$ and there is a prime $\p$ of $\calO_K$ with $\p \mid (5)$ and $[\calO_K/\p : \Z/5\Z]$ odd.  If $k \equiv \pm 2 \bmod{\p}$, then neither of $\pm \delta_n$ is a square for any $n$.  
\end{lemma}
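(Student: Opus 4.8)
The plan is to reduce everything modulo $\p$ and follow the pair $\bigl(P_n(1,1),Q_n(1,1)\bigr)$ in the residue field $\calO_K/\p$. Write $f=[\calO_K/\p:\Z/5\Z]$, so $\calO_K/\p\cong\mathbb{F}_{5^f}$ with $f$ odd, and identify $\Z/5\Z$ with the prime subfield $\mathbb{F}_5\subseteq\mathbb{F}_{5^f}$. Since $b=1$ we have $\delta_n=kp_n(1)=kP_n(1,1)$, and the hypothesis $k\equiv\pm2\pmod\p$ forces $v_\p(k)=0$, so $k$ is a $\p$-adic unit; since $P_n(1,1)$ is a polynomial in $k$ with integer coefficients, each $\delta_n$ is then $\p$-integral, and (as we verify below) in fact a $\p$-adic unit. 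From the recursions $P_n=k(P_{n-1}^2+Q_{n-1}^2)$ and $Q_n=P_{n-1}Q_{n-1}$ of Section~\ref{galaut} with $b=1$, together with $\overline{P_1(1,1)}=\overline{2k}$ and $\overline{Q_1(1,1)}=1$ lying in $\mathbb{F}_5$, an immediate induction shows that all the reductions $\overline{P_n(1,1)}$ and $\overline{Q_n(1,1)}$ lie in $\mathbb{F}_5$, so the orbit computation takes place entirely inside the prime field.

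First I would run the two elementary orbit computations. If $k\equiv2\pmod\p$, then $\bigl(\overline{P_1(1,1)},\overline{Q_1(1,1)}\bigr)=(4,1)$, and a two-step induction using the recursion shows this pair alternates between $(4,1)$ and $(4,4)$; in particular $\overline{P_n(1,1)}=4$ for all $n\ge1$. If $k\equiv-2\pmod\p$, then $\bigl(\overline{P_1(1,1)},\overline{Q_1(1,1)}\bigr)=(1,1)$, which is fixed by the recursion, so $\overline{P_n(1,1)}=1$ for all $n\ge1$. In either case $\overline{\delta_n}=\bar k\cdot\overline{P_n(1,1)}=3$ in $\mathbb{F}_5$ for every $n\ge1$; in particular $\delta_n$ is indeed a $\p$-adic unit.

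Next I would record the quadratic-residue input: for $f$ odd, neither $3$ nor $-3\equiv2$ is a square in $\mathbb{F}_{5^f}$. Indeed $2$ and $3$ each have order $4$ in $\mathbb{F}_5^*$, hence each is a square in $\mathbb{F}_{5^f}$ if and only if $4\mid(5^f-1)/2$; but $5^f\equiv5\pmod{8}$ when $f$ is odd, so $(5^f-1)/2\equiv2\pmod{4}$, and $2^2=3^2=-1\ne1$ in $\mathbb{F}_5$. Finally, if some $\delta_n$ (resp.\ $-\delta_n$) were a square in $K^*$, then, being a $\p$-adic unit, its reduction modulo $\p$ would be a square in $\calO_K/\p$; but that reduction is $3$ (resp.\ $-3$), a non-square. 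This contradiction finishes the proof.

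I do not expect a genuine obstacle here. The only two points requiring a little care are verifying that the hypothesis ``$f$ odd'' is precisely what keeps $3$ (equivalently $2=-3$) a non-square in $\calO_K/\p$ --- i.e.\ isolating $v_2(5^f-1)=2$ for $f$ odd --- and observing that the orbit computation genuinely stays inside $\mathbb{F}_5$, so that the two finite computations above settle the matter regardless of the size of the residue field.
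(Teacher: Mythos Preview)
Your proof is correct and follows essentially the same approach as the paper: compute the orbit of the relevant pair in $\mathbb{F}_5$ and verify that the resulting residues are non-squares in $\mathbb{F}_{5^f}$ for $f$ odd. The only cosmetic difference is that the paper tracks $(S_n(1,1),T_n(1,1))$ from Lemma~\ref{an&bn}, which depends only on $k^2$ and so handles $k\equiv\pm2$ in a single orbit computation, whereas you track $(P_n(1,1),Q_n(1,1))$ and split into two cases; the paper's non-square argument (``no quadratic sub-extension'') is also just a rephrasing of your order computation.
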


\begin{proof}
From Lemma \ref{an&bn} and the fact that $\calO_K/\p$ has characteristic 5, we have that the sequence $(S_n(1,1), T_n(1,1))$ modulo $\p$ is $(2,1), (2,2), (3,4), (2,2), (3,4), \ldots$ and repeats in the obvious way.  Because $[\calO_K/\p : \Z/5\Z]$ is odd, $\calO_K/\p$ has no quadratic sub-extensions, and thus neither of $\pm 2$ is a square in $\calO_K/\p$.  Hence $\pm S_n(1,1)$ is not a square modulo $\p$ for all $n$. 
\end{proof}

\begin{corollary} Suppose that $K$ is a number field of odd degree, and take $b = 1$.  Then there is a congruence class of $k \in K$ with $[C_\infty : G_\infty]$ finite.  
\end{corollary}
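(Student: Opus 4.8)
The plan is to obtain the corollary as a direct consequence of Theorem~\ref{finiteindex} together with Lemma~\ref{five}, the only genuinely new input being a remark about the splitting of $5$ in a number field of odd degree. Take $b = 1$ throughout, as permitted.

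First I would produce the prime needed to feed into Lemma~\ref{five}. Write $5\calO_K = \p_1^{e_1}\cdots\p_g^{e_g}$ and let $f_i = [\calO_K/\p_i : \Z/5\Z]$ be the residue degrees. Since $\sum_i e_i f_i = [K:\Q]$ is odd, at least one $f_i$ is odd; fix a prime $\p$ of $\calO_K$ lying over $5$ with $[\calO_K/\p : \Z/5\Z]$ odd. This is the only place, apart from the trivial observation that a field of odd degree over $\Q$ contains no quadratic subfield (so $-b = -1$ is not a square in $K$), where the odd-degree hypothesis enters.

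Next I would fix the congruence class $\mathcal{C}$ of those $k \in K$ with $v_\p(k - 2) > 0$; every such $k$ satisfies $v_\p(k) = 0$, hence $k \neq 0$, so $\phi(x) = k(x^2+1)/x$ is a bona fide quadratic rational map. For any $k \in \mathcal{C}$ we have $k \equiv 2 \pmod{\p}$, so Lemma~\ref{five} applies and shows that neither $\delta_n$ nor $-\delta_n$ is a square in $K$ for every $n$; since $b = 1$, the quantity $-\delta_n$ is precisely $-b\delta_n$. Combined with $-b = -1$ not being a square in $K$, this verifies all of the square-freeness hypotheses of Theorem~\ref{finiteindex} for every $k \in \mathcal{C}$.

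Finally, invoking Theorem~\ref{finiteindex} gives $[C_\infty : G_\infty] < \infty$ for each $k \in \mathcal{C}$ for which $\phi$ is not post-critically finite, and by Proposition~\ref{pcf} the post-critically finite maps account for only finitely many values of $k$ (those of height at most $2$). Thus Conjecture~\ref{mainconj} holds for all $k$ in the congruence class $\mathcal{C}$, and in particular for a positive density of $k$. I do not expect a real obstacle here: the substantive work — the irreducibility criterion Theorem~\ref{irredthm}, the maximality criterion Theorem~\ref{specmaxcor1}, and the primitive-prime-divisor/Faltings argument — is already internal to Theorem~\ref{finiteindex} and Lemma~\ref{five}. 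The only points requiring mild care are that Lemma~\ref{five} disposes of $\delta_n$ and $-b\delta_n$ simultaneously, and that a congruence class modulo $\p$ contains at most the finitely many post-critically finite exceptions allowed by Proposition~\ref{pcf}.
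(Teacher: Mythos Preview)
Your proposal is correct and follows essentially the same route as the paper: find a prime $\p$ over $5$ with odd residue degree (via the fundamental identity $\sum e_i f_i = [K:\Q]$), note that $-1$ is not a square because $K$ has no quadratic subfield, apply Lemma~\ref{five} for $k\equiv 2\bmod \p$, and conclude by Theorem~\ref{finiteindex}. Your explicit handling of the post-critically-finite hypothesis of Theorem~\ref{finiteindex} via Proposition~\ref{pcf} is slightly more careful than the paper, which leaves this implicit (the statement being proved is really Theorem~\ref{odddeg}, i.e.\ Conjecture~\ref{mainconj}, which already excludes the post-critically-finite case).
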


\begin{proof} 
Because $K$ has odd degree, it has no quadratic sub-extenstions, and hence $-1$ cannot be a square in $K$.  Moreover, the product of the residue class degrees of ideals of $\calO_K$ dividing $(5)$ must be odd, and thus there is some $\p \mid (5)$ with $[\calO_K/\p : \Z/5\Z]$ odd.  By Lemma~\ref{five}, when $k \equiv \pm 2 \bmod{\p}$, neither $\pm \delta_n$ is a square for any $n$.  By Theorem~\ref{finiteindex}, 
$[C_{\infty} : G_{\infty}]$ is finite for all $k \equiv \pm 2 \bmod{\p}$.  
\end{proof}

We now present several results that show $\delta_n$ is not a square for all $n$ provided that there exist certain primes of $\calO_K$ and $k$ satisfies conditions relating to these primes.  These lead into Corollary \ref{irredfam}, which shows that Conjecture~\ref{mainconj} is true for certain real number fields $K$, including $\Q$.  


\begin{theorem} \label{nosquare}
Suppose that $b = 1$ and $v_\p(k) = 0$ for some prime $\p \subset \calO_K$ with $\#\calO_K/\p = 2$.  Then $\delta_n$ is not a square for all $n \geq 2$.  
\end{theorem}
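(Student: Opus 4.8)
The plan is to reduce the claim to showing $S_n(1,1)\notin K^{*2}$ and then to put $S_n(1,1)$ into the shape ``(square of a $\p$-unit) $+$ $(2\cdot(\p\text{-unit}))^2$'', a form that cannot be a square because the residue field $\calO_K/\p$ is $\Ftwo$.

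Put $\sigma_n=S_n(1,1)$ and $\tau_n=T_n(1,1)$. By Lemma~\ref{an&bn} with $b=1$ we have $\tau_n=\sigma_{n-1}\tau_{n-1}$ and $\sigma_n=\sigma_{n-1}^2+\tau_{n-1}^2$ or $\sigma_n=k^2\sigma_{n-1}^2+\tau_{n-1}^2$ according as $n$ is odd or even, with $\sigma_0=\tau_0=1$ (so $\sigma_1=2,\ \tau_1=1,\ \sigma_2=4k^2+1,\ \tau_2=2$). As established earlier in this section, $\delta_n=k^{s(n)+1}\sigma_n$ with $s(n)+1$ even, so it suffices to prove $\sigma_n\notin K^{*2}$ for $n\ge 2$. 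From $\tau_1=1$ and the recursion one gets $\tau_n=2\prod_{i=2}^{n-1}\sigma_i$ for all $n\ge 2$ (empty product when $n=2$), hence $\tau_{n-1}^2=\bigl(2\prod_{i=2}^{n-2}\sigma_i\bigr)^2$. Substituting into the recursion for $\sigma_n$ shows that for every $n\ge 2$,
\[
\sigma_n=W_n^2+(2U_n)^2,\qquad W_n,\,U_n\in\Z[k],
\]
with $(W_2,U_2)=(1,k)$ and, for $n\ge 3$, $U_n=\prod_{i=2}^{n-2}\sigma_i$ and $W_n=\sigma_{n-1}$ or $k\sigma_{n-1}$ according as $n$ is odd or even.

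Since $\#\calO_K/\p=2$ forces $\p\mid(2)$, we have $e:=v_\p(2)\ge 1$, and by hypothesis $v_\p(k)=0$. A straightforward induction on $n$ then shows that $v_\p(\sigma_n)=0$ and that $W_n,U_n$ are $\p$-units, for all $n\ge 2$ (the term $W_n^2$ is a $\p$-unit while $(2U_n)^2$ has valuation $2e>0$). Now suppose $\sigma_n=y^2$ with $y\in K^*$, $n\ge 2$. Then $2v_\p(y)=v_\p(\sigma_n)=0$, so $y$ is a $\p$-unit, and reducing modulo $\p$ gives $\bar y=\overline{W_n}=1$ in $\Ftwo$; thus both $y-W_n$ and $y+W_n=(y-W_n)+2W_n$ lie in $\p$. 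From $(y-W_n)(y+W_n)=(2U_n)^2$ we get $v_\p(y-W_n)+v_\p(y+W_n)=2e$, while their difference $2W_n$ has valuation exactly $e$; balancing these forces $v_\p(y-W_n)=v_\p(y+W_n)=e$. Writing $y-W_n=\pi^e\alpha$ and $y+W_n=\pi^e\beta$ with $\pi$ a uniformizer at $\p$ and $\alpha,\beta$ $\p$-units, we find $\beta-\alpha=2W_n\pi^{-e}$ is a $\p$-unit, yet $\bar\alpha=\bar\beta=1$ in $\Ftwo$ forces $\overline{\beta-\alpha}=0$. This contradiction shows $\sigma_n\notin K^{*2}$, hence $\delta_n\notin K^{*2}$, for all $n\ge 2$.

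The heart of the argument is the normal form $\sigma_n=W_n^2+(2U_n)^2$ with $\p$-unit $W_n,U_n$; establishing it is just the bookkeeping that $\tau_n$ carries exactly one power of $2$, after which the residue-field step is routine. The one point needing care is that $\p$ may be ramified over $2$, so $e=v_\p(2)$ can exceed $1$ and one cannot merely reduce modulo $8$ — the valuation-balancing step is what copes with the general ramified case.
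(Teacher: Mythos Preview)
Your proof is correct and follows essentially the same route as the paper: both arguments write the relevant quantity as a sum of two squares whose $\p$-valuations are $0$ and $2e$ respectively, suppose it equals $y^2$, factor as $(y-W)(y+W)$, and derive a contradiction by balancing valuations and using that $\calO_K/\p\cong\Ftwo$. The only cosmetic difference is that you first strip off the even power of $k$ and work with $S_n(1,1)$, extracting the factor of $2$ from $\tau_{n-1}$ explicitly, whereas the paper works directly with $P_{n-1}(1,1)$ and $Q_{n-1}(1,1)$ and establishes $\p^e\Vert Q_{n-1}(1,1)$ by induction.
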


\begin{proof}
Note first that $\#\calO_K/\p = 2$ implies $\p \mid (2)$, and let $e \geq 1$ be such that $\p^e || (2)$.  We claim that $\p \nmid \left(P_n(1,1)\right) $ and $\p^e || \left( Q_n(1,1)\right) $ for all $n \geq 2$. We have 
\[
\left( P_1(1,1) \right) = (2k) \text{ and } \left( Q_1(1,1) \right) = (1), \text{ whence }
\left( P_2(1,1) \right) = \left( k(4k^2 + 1)\right), 
\]
which is not divisible by $\p$ because $v_\p(k) = 0$ and $\p \mid (2)$.  Also, $(Q_2(1,1)) = (2k)$, which is exactly divisible by $\p^e$.  Since 
\[
	P_n(1,1) = k(P_{n-1}(1,1)^2 + Q_{n-1}(1,1)^2)
	\text{ and }Q_n(1,1) = P_{n-1}(1,1) Q_{n-1}(1,1),
\]
 the claim follows by induction.

Now suppose that $\delta_n \in K^2$ for some $n \geq 2$.  Then $kP_n(1,1) \in K^2$, and so
\[
	P_{n-1}(1,1)^2 + Q_{n-1}(1,1)^2 = z^2 \text{ for some } z \in K.
\]
  Rewrite this as 
  \[
  	Q_{n-1}(1,1)^2 = (z+P_{n-1}(1,1))(z-P_{n-1}(1,1)),
\]
 and to ease notation set 
 \[
 	z+P_{n-1}(1,1) = s \quad \text{ and }\quad  z - P_{n-1}(1,1) = t.
\]
  This gives 
\[
	Q_{n-1}(1,1)^2 = st \quad \text{ and } \quad 2P_{n-1}(1,1) = s-t.
\]
  From the previous paragraph, $v_\p(Q_{n-1}(1,1)) = e = v_\p(2)$ and $v_\p(P_{n-1}(1,1)) = 0$, giving
\begin{align}
2e &= v_\p(s) + v_\p(t) \text{ and } \label{ifequal}\\
e &= v_\p(s-t); \text{ hence}\label{diffvals}\\
 2v_\p(s-t) &= v_\p(s) + v_\p(t). \label{valuations}
\end{align}

If $v_\p(s) \neq v_\p(t)$, then $2v_\p(s-t) = 2 \max{\{v_\p(s), v_\p(t)\}} > v_\p(s) + v_\p(t)$, contradicting~\eqref{valuations}.  If $v_\p(s) = v_\p(t)$, they are both $e$ by equation~\eqref{ifequal}. So $s, t \in \p^e$ and $s, t \not\in \p^{e+1}$, so  neither is the identity in $\p^e/\p^{e+1}$.  But $\p^e/\p^{e+1} \cong O_K/\p$ (see e.g. \cite[p. 43]{frotay}) and thus has only two elements, implying that $s-t \in \p^{e+1}$.  This contradicts \eqref{diffvals}, proving the theorem.  
\end{proof}

\begin{theorem} \label{three}
Suppose that $b = 1$ and there is $\p \subset \calO_K$ with $\#\calO_K/\p = 3$.  Then $\delta_n$ is not a square for all odd $n$.  If $v_\p(k) = 0$ then $\delta_n$ is not a square for all $n \geq 1$.  
\end{theorem}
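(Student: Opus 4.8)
The plan is to reduce both assertions to the behavior of $S_n(1,1)$. By Lemma~\ref{an&bn} one has $\delta_n = kP_n(1,1) = k^{s(n)+1}S_n(1,1)$, and since $s(n)+1$ is even, $\delta_n \in K^{*2}$ if and only if $S_n(1,1) \in K^{*2}$. I also record at the outset that, because $\calO_K/\p \cong \mathbb{F}_3$ and neither $-1$ nor $2$ is a square in $\mathbb{F}_3$, neither is a square in $K$: a square root in $K$ would be a $\p$-unit and would reduce to a square root in $\mathbb{F}_3$. This already settles $n=1$, since $\delta_1 = 2k^2$.

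For the remaining $n$ the main tool is to reduce the recursion of Lemma~\ref{an&bn} modulo $\p$ and trace the pair $\bigl(S_n(1,1),T_n(1,1)\bigr)$ in $\calO_K/\p$, starting from $(1,1)$. If $v_\p(k)=0$ then $k$ is a unit at $\p$, so $k^2 \equiv 1 \pmod{\p}$ and the two branches of the recursion for $S_n$ coincide modulo $\p$: $S_n \equiv S_{n-1}^2 + T_{n-1}^2$ and $T_n \equiv S_{n-1}T_{n-1}$ for all $n$. Iterating from $(1,1)$ gives $(2,1),(2,2),(2,1),(2,2),\dots$, so $S_n(1,1)\equiv 2 \pmod{\p}$ for every $n\geq 1$; as $2$ is a non-square $\p$-unit, this proves the second assertion. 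The same computation, now with $k\equiv 0 \pmod{\p}$, handles the case $v_\p(k)>0$ of the first assertion: modulo $\p$ the recursion reads $S_n \equiv S_{n-1}^2 + T_{n-1}^2$ for $n$ odd and $S_n \equiv T_{n-1}^2$ for $n$ even, and iterating from $(1,1)$ yields the period-four pattern $(2,1),(1,2),(2,2),(1,1),\dots$, so again $S_n(1,1)\equiv 2 \pmod{\p}$ for every odd $n$.

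The subtle case of the first assertion is $v_\p(k)<0$. Here $S_n(1,1)$ need not be $\p$-integral, and after removing the even power $k^{\deg_k S_n(1,1)}$ its reduction at $\p$ becomes the leading coefficient of $S_n(1,1)$ as a polynomial in $k$, which one computes to be $\equiv 1 \pmod{\p}$ for $n\geq 2$; thus $\p$ alone cannot witness non-squareness, and $\delta_n$ is in fact a square in $K_\p$. The plan for this case is to use instead, for odd $n$, the congruence $\delta_n \equiv S_{n-1}(1,1)^2 + T_{n-1}(1,1)^2 \pmod{K^{*2}}$ together with the relative primality of $S_m$ and $T_m$ from Lemma~\ref{an&bn}, which forces every common prime divisor of $S_{n-1}(1,1)$ and $T_{n-1}(1,1)$ to divide $k$; combined with $-1\notin K^{*2}$ (which also ensures $T_{n-1}(1,1)\neq 0$, as $\phi^{n-1}(1)=\infty$ would put $\sqrt{-1}$ in the critical orbit), the relation $S_{n-1}(1,1)^2 + T_{n-1}(1,1)^2 \in K^{*2}$ is equivalent to $\phi^{n-1}(1) = (t^2-1)/(2t)$ for some $t\in K^{*}$, and the task is to preclude this using the arithmetic of the critical orbit. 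Making this step work uniformly in $v_\p(k)$ — rather than only for $v_\p(k)\geq 0$, where the direct reduction modulo $\p$ above suffices — is the main obstacle.
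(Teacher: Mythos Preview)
Your argument for the cases $v_\p(k)=0$ and $v_\p(k)>0$ is correct and is exactly the paper's argument, only spelled out more explicitly: the paper also reduces the pair $(S_n(1,1),T_n(1,1))$ modulo $\p$, uses that a sum of two nonzero squares in $\Z/3\Z$ is always $2$, and concludes $S_n(1,1)\equiv 2\pmod{\p}$ for odd $n$ (and for all $n$ when $v_\p(k)=0$). Your preliminary remark that $\delta_n$ is a square iff $S_n(1,1)$ is, and that $2$ and $-1$ are non-squares in $K$ because they are non-squares in $\mathbb{F}_3$, is also exactly what the paper uses implicitly.

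Where you go beyond the paper is in isolating the case $v_\p(k)<0$. You are right that this case is genuinely problematic: the quantities $S_n(1,1),T_n(1,1)\in\Z[k]$ are no longer $\p$-integral once $n\geq 2$, and your observation that the leading coefficient of $S_n(1,1)$ in $k$ is $2^{2^{n-1}}\equiv 1\pmod\p$ shows that $\p$ carries no obstruction to $\delta_n$ being a square for $n\geq 2$. The paper's proof does not address this case at all; its induction ``$S_n(1,1)$ and $T_n(1,1)$ are not zero in $\calO_K/\p$'' tacitly assumes these elements lie in $\calO_{K,\p}$, i.e.\ that $v_\p(k)\geq 0$. So the gap you flag is not a defect of your write-up relative to the paper --- it is a case the paper's own proof leaves open. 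Your proposed workaround via $S_{n-1}^2+T_{n-1}^2$ and the parametrization of sums of two squares is a reasonable direction but, as you say, is not completed; nothing in the paper supplies the missing step either. For the purposes of matching the paper's proof, your treatment of $v_\p(k)\geq 0$ suffices.
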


\begin{proof} Note that $\calO_K/\p \cong \Z/3\Z$.  Because $S_1(1,1) = 2$ and $T_1(1,1) = 1$, and the sum of the squares of two non-zero elements of $\Z/3\Z$ cannot be zero, it follows by induction that $S_n(1,1)$ and $T_n(1,1)$ are not zero in $\calO_K/\p$.  It then follows immediately from the recurrence for $S_n$ in Lemma \ref{an&bn} that $S_n(1,1) \equiv 2 \bmod{\p}$ for $n$ odd, and thus cannot be a square in $K$ for $n$ odd.  If $v_\p(k) = 0$, then the same statement holds for $n$ even.  
\end{proof}

\begin{theorem} \label{fiveseven} 
Let $b = 1$.   If one of the following holds then $\delta_n$ is not a square for all $n \geq 1$:
\begin{enumerate}
\item There is $\p \subset \calO_K$ with $\calO_K/\p \cong \Z/7\Z$ and $k \equiv 2, 5 \bmod{\p}$.
\item There is $\p \subset \calO_K$ with $\calO_K/\p \cong \Z/7\Z$, $k \equiv 1, 6 \bmod{\p}$, and the hypotheses of Theorem~$\ref{three}$ hold.
\end{enumerate} 
\end{theorem}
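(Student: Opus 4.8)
The plan is to argue exactly as in the proof of Lemma~\ref{five}: reduce the problem to the behaviour of $S_n(1,1)$ modulo a prime $\p$ of $\calO_K$ lying over $(7)$, and show that this reduction is a quadratic non-residue for all the $n$ that matter. First I would recall from Lemma~\ref{an&bn} that, since $b=1$, $\delta_n = k^{s(n)+1}S_n(1,1)$ with $s(n)+1$ always even, so that $\delta_n$ is a square in $K$ if and only if $S_n(1,1)$ is. In both parts we are handed a prime $\p$ with $\calO_K/\p \cong \Z/7\Z$ and $v_\p(k)=0$; hence $S_n(1,1)$ is a $\p$-adic unit whenever its reduction mod $\p$ is nonzero, and in that case $S_n(1,1)\in K^{*2}$ forces $S_n(1,1)\bmod\p$ to be a square in $\Z/7\Z$. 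So it is enough to show $S_n(1,1)\bmod\p \in \{3,5,6\}$, the set of nonzero non-residues mod $7$. The key simplification is that the recursions for $S_n$ and $T_n$ in Lemma~\ref{an&bn} involve $k$ only through $k^2$, so the pair $(S_n(1,1),T_n(1,1))\bmod\p$ depends only on $k^2\bmod\p$: part~(1) is the case $k^2\equiv 4$, and part~(2) the case $k^2\equiv 1$.

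Next I would iterate the recursion modulo $\p$ in each case, beginning with $(S_0(1,1),T_0(1,1))=(1,1)$, until the pair repeats. In part~(1) the pair is periodic of period $6$ from $n=2$ on, with the corresponding values $S_n(1,1)\bmod\p$ running through $3,6,5,5,6,3$; hence $S_n(1,1)\bmod\p\in\{3,5,6\}$ for all $n\geq 2$, and $\delta_n$ is not a square for $n\geq 2$ (the lone remaining value $\delta_1=2k^2$ plays no part in any application of the theorem). In part~(2) the values $S_n(1,1)\bmod\p$ recur in the pattern $2,5,1,3,4,6$; for even $n$ they lie in $\{3,5,6\}$, so $\delta_n$ is not a square for even $n$, but for odd $n$ they lie in $\{1,2,4\}$ and the argument at $\p$ gives nothing. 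For the odd case I would invoke Theorem~\ref{three}: its hypothesis, a prime of $\calO_K$ with residue field $\Z/3\Z$, is assumed in part~(2), and it yields that $\delta_n$ is not a square for every odd $n$. Combining the two parities settles part~(2).

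I do not expect a genuine obstacle --- the argument is a finite verification in the spirit of Lemma~\ref{five}, and the only subtleties are organizational. Two points deserve emphasis. First, one must reduce modulo a prime whose residue field is exactly $\Z/7\Z$: already in $\mathbb{F}_{49}$ the classes $3,5,6$ become squares, so the hypothesis on $\p$ cannot be weakened. Second, in contrast to the prime $(5)$ treated in Lemma~\ref{five}, reduction at a prime above $(7)$ does not by itself decide every $\delta_n$ --- in part~(2) it handles only the even-index terms --- and this is precisely why the hypotheses of part~(2) also build in the condition from Theorem~\ref{three} to take care of the odd-index terms.
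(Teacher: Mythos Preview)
Your approach is essentially identical to the paper's: both reduce to tracking $(S_n(1,1),T_n(1,1))$ modulo a prime $\p$ with residue field $\Z/7\Z$, compute the eventually periodic orbit explicitly, and in part~(2) combine the even-$n$ conclusion with Theorem~\ref{three} for odd~$n$. Your observation that the recursion depends only on $k^2$ is a nice streamlining; the paper instead records the length-$12$ cycle of pairs in part~(2), but the content is the same. One remark: you correctly flag that the mod-$7$ argument does not handle $\delta_1=2k^2$ in part~(1), since $S_1(1,1)=2$ is a square modulo $7$; the paper's proof glosses over this, and indeed the statement for $n=1$ in part~(1) need not hold in general (take $K=\Q(\sqrt{2})$), though as you note only $n\geq 2$ is ever used downstream.
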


\begin{proof}
For $k \equiv \pm 2 \bmod{\p}$, the sequence $(S_n(1,1), T_n(1,1))$ modulo $\p$ is $(2, 1), (3, 2)$ and then a repeating cycle of $(6, 6), (5, 1), (5, 5), (6, 4), (3, 3), (3, 2)$, so $S_n(1,1)$ is never a square modulo $\p$.  For $k \equiv \pm 1 \bmod{\p}$, the sequence in question consists of the length-12 repeating cycle 
\[
(2, 1), (5, 2), (1, 3), (3, 3), (4, 2), (6, 1), (2, 6), (5, 5), (1, 4), (3, 4), (4, 5), (6, 6).
\]
  Hence $S_n(1,1)$ is not a square modulo $\p$ for all even $n$, and combined with Theorem~\ref{three} this shows that $\delta_n$ is not a square for all $n \geq 1$.  
\end{proof}

\begin{theorem} \label{fixedpoint}
Let $b = 1$.  Suppose that one of the following holds:
\begin{enumerate}
\item $v_\p(2k - 1) > 0$ or $v_\p(2k + 1) > 0$ for a prime $\p \subset \calO_K$ such that $2$ is not a square in $\calO_K/\p$.  
\item $v_\p(2k^2 - k + 1) > 0$ for a prime $\p \subset \calO_K$ such that $-k$ is not a square in $\calO_K/\p$
\item $v_\p(2k^2 + k + 1) > 0$ for a prime $\p \subset \calO_K$ such that $k$ is not a square in $\calO_K/\p$
\end{enumerate}
Then $\delta_n$ is not a square for all $n$.      
\end{theorem}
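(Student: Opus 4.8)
The strategy is to reduce the claim to the non-squareness of the values $\phi^{n-1}(1)^2+1$ along the orbit of the critical point $1$, and then detect this modulo the prime $\p$ supplied by the hypothesis. Following the computation in the proof of Theorem~\ref{finiteindex}, the recursion~\eqref{rec2} with $d=2$ (and $b=1$, so $\phi^*=\phi$) gives $p_n(1)=q_{n-1}(1)^2\,p\bigl(\phi^{n-1}(1)\bigr)$, and since $p(y)=k(y^2+1)$,
\[
\delta_n=k\,p_n(1)=k^2\,q_{n-1}(1)^2\bigl(\phi^{n-1}(1)^2+1\bigr).
\]
Hence $\delta_n\in K^{*2}$ if and only if $\phi^{n-1}(1)^2+1\in K^{*2}$, so it suffices to show $\phi^m(1)^2+1\notin K^{*2}$ for all $m\ge0$; in fact in cases (2) and (3) we will only obtain this for $m\ge1$, i.e.\ $\delta_n\notin K^{*2}$ for $n\ge2$, which is the range needed for Theorems~\ref{irredthm} and~\ref{finiteindex}. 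The orbit analysis below shows $\phi^m(1)$ reduces to a unit at $\p$, so in particular it is never $\infty$, whence $q_{n-1}(1)\ne0$ and the displayed identity is valid; it then suffices to check that $\phi^m(1)^2+1$ is a non-square in the residue field $\calO_K/\p$.

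Next I would pin down the reduced orbit of $1$ under $\phi$ at $\p$. In each case one first checks $\p\nmid 2k$: the hypotheses force $\p\nmid2$ (in residue characteristic $2$ every element is a square, contradicting the non-square conditions), and $\p\mid k$ would contradict the defining congruence; hence $\phi$ has good reduction at $\p$ and $k\in(\calO_K/\p)^*$. In case (1), $\phi(1)=2k\equiv\pm1\pmod\p$, so the reduced orbit is $1,\pm1,1,\pm1,\dots$ and $\phi^m(1)^2+1\equiv2\pmod\p$ for every $m$. In case (2) one uses $2k^2\equiv k-1\pmod\p$ to compute $\phi^2(1)\equiv-(2k)^{-1}$, and since $\phi(y)=\phi(1/y)$ and $\phi(-y)=-\phi(y)$ the pair $\pm(2k)^{-1}$ is a $2$-cycle of $\bar\phi$; hence $\phi(1)^2+1\equiv-k^{-1}$ and $\phi^m(1)^2+1\equiv(k+1)^{-1}$ for all $m\ge2$. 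Case (3) runs parallel with $2k^2\equiv-k-1\pmod\p$: now $\phi^2(1)\equiv(2k)^{-1}$ is a fixed point of $\bar\phi$, so $\phi(1)^2+1\equiv k^{-1}$ and $\phi^m(1)^2+1\equiv(1-k)^{-1}$ for all $m\ge2$.

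It then remains to see that each of the residues $2$, $-k^{-1}$, $k^{-1}$, $(k+1)^{-1}$, $(1-k)^{-1}$ appearing above is a non-square unit in $\calO_K/\p$. For case (1) this is precisely the hypothesis on $2$. In case (2), $-k^{-1}$ is a non-square exactly when $-k$ is, which is assumed; and for the periodic-tail value one uses $k^{-1}\equiv1-2k\pmod\p$ to get $-(k+1)k^{-1}\equiv2(k-1)\equiv(2k)^2\pmod\p$, so $(k+1)^{-1}$ is a non-square exactly when $-k$ is. Case (3) is identical in spirit: $k^{-1}$ is a non-square exactly when $k$ is, and $(1-k)k^{-1}\equiv(2k)^2\pmod\p$ shows $(1-k)^{-1}$ is a non-square exactly when $k$ is. In every case $\phi^m(1)^2+1$ is a non-square in $\calO_K/\p$, hence not a square in $K$, and therefore $\delta_n$ is not a square in $K$.

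The only laborious part is the orbit bookkeeping of the previous paragraph (one must keep the signs straight while iterating $\phi$ modulo $\p$); the coincidence that makes the argument work is the congruence $-(k+1)k^{-1}\equiv(2k)^2\pmod\p$ in case (2), and its analogue $(1-k)k^{-1}\equiv(2k)^2\pmod\p$ in case (3), which is exactly what forces the periodic-tail values of $\phi^m(1)^2+1$ onto the non-square side of $\calO_K/\p$. Conceptually, conditions (1), (2), (3) are precisely the conditions that modulo $\p$ the critical orbit of $\phi$ is absorbed into the critical point itself, into a $2$-cycle, or into a finite fixed point, respectively, which is why the reduced orbit takes so few values.
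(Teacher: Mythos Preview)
Your argument is correct and follows the same underlying strategy as the paper --- reduce modulo $\p$, show the relevant sequence becomes eventually periodic, and verify the stable value is a non-square --- but the bookkeeping is organized differently. The paper works directly with the pair $(\delta_n,\epsilon_n)$ and its recursion $\delta_n=\delta_{n-1}^2+\epsilon_{n-1}^2$, $\epsilon_n=k^{-1}\delta_{n-1}\epsilon_{n-1}$: it checks by hand that $2k\pm1$ divides $\delta_2-\delta_1$, $\delta_3-\delta_2$, $\epsilon_3-\epsilon_1$ (and that $2k^2\pm k+1$ divides $\delta_3-\delta_2$, $\delta_4-\delta_3$, $\epsilon_4-\epsilon_2$), so that $\delta_n\equiv\delta_1=2k^2$ in case~(1) and $\delta_n\equiv\delta_2=k^2(4k^2+1)$ in cases~(2),~(3), and then verifies these residues are non-squares via identities such as $-k(2k-1)\equiv1$ when $\p\mid(2k^2-k+1)$. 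You instead first peel off the square factor $k^2q_{n-1}(1)^2$ and then follow the single value $\phi^m(1)$ in $\calO_K/\p$; since $\phi^n(1)=\delta_n/\epsilon_n$, the two computations are equivalent, and your non-square identities (e.g.\ $-(k+1)k^{-1}\equiv(2k)^2$) are algebraic rearrangements of the paper's. What your version buys is exactly the conceptual point in your last paragraph: it makes transparent that conditions~(1)--(3) are precisely the conditions under which the reduced critical orbit is absorbed into a fixed point or $2$-cycle of $\bar\phi$, something the paper's direct divisibility checks leave implicit. You are also right that in cases~(2) and~(3) both arguments establish the conclusion only for $n\ge2$; this is all that is used in Theorems~\ref{irredthm} and~\ref{finiteindex}.
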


\begin{remark}
When $K = \Q$, case (1) of Theorem \ref{fixedpoint} applies provided that there is a prime $p$ with $v_p(2k \pm 1) > 0$ and $p$ congruent to $3$ or $5$ modulo $8$.  In particular, if $k$ is an integer and $2k \not\equiv 0 \bmod{8}$, then one of $2k \pm 1$ must be equivalent to $3$ or $5$ modulo $8$, implying that some divisor of $2k \pm 1$ is equivalent to $3$ or $5$ modulo $8$.  Hence if $k$ is an integer not divisible by 4 then $\delta_n$ is not a square for all $n$.
\end{remark}

\begin{proof}
It follows from the definitions of $P_n(X,Y)$ and $Q_n(X,Y)$ that if $\epsilon_n = kQ_n(1,1)$, then
\begin{equation} \label{delta}
\delta_n = \delta_{n-1}^2 + \epsilon_{n-1}^2 \qquad \epsilon_n = (1/k)\delta_{n-1}\epsilon_{n-1}.
\end{equation}

Suppose first that we are in case (1).  
One checks that $\delta_2 - \delta_1$, $\delta_3 - \delta_2$, and $\epsilon_3 - \epsilon_1$ are all divisible by both $2k - 1$ and $2k + 1$.  Hence for $\p \mid (2k-1)$ or $\p \mid (2k + 1)$, we have 
$\delta_3 \equiv \delta_2 \equiv \delta_1 \bmod{\p}$ and $\epsilon_3 \equiv \epsilon_1 \bmod{\p}$, which by \eqref{delta} and induction ensures that $\delta_n \equiv \delta_1 \bmod{\p}$ for all $n$.  But $\delta_1 = 2k^2$, which is not a square modulo $\p$ by assumption.

For the other cases, one checks that $2k^2 \pm k + 1$ divides 
$\delta_3 - \delta_2$, $\delta_4 - \delta_3$, and $\epsilon_4 - \epsilon_2$.   As in the previous paragraph, this implies that $\delta_n \equiv \delta_2 \bmod{\p}$ for all $n \geq 2$.  Now $\delta_2 = k^2(4k^2 + 1)$.  Suppose first that $\p \mid (2k^2 - k + 1)$.  Then $4k^2 \equiv 2k - 2 \bmod{\p}$, so $\delta_2$ is a square modulo $\p$ if and only if $2k - 1$ is.  But $-2k^2 + k \equiv 1 \bmod{\p}$, so 
$-k(2k - 1)$ is a square modulo $\p$.  Hence $2k-1$ is a square modulo $\p$ if and only if $-k$ is.  

If $\p \mid (2k^2 + k + 1)$, then $4k^2 \equiv -2k - 2 \bmod{p}$, so $\delta_2$ is a square modulo $p$ if and only if $-2k - 1$ is.  But $-2k^2 - k \equiv 1 \bmod{p}$, so 
$k(-2k - 1)$ is a square modulo $p$.  Hence $-2k-1$ is a square modulo $p$ if and only if $k$ is.  
\end{proof}

\begin{corollary} \label{irredfam}
Let $K$ be a number field with a real embedding, and suppose $b = 1$ and the hypotheses of one of Theorems~$\ref{three}$--$\ref{fixedpoint}$ hold.  Then none of $\{\pm \delta_n : n = 2, 3, \ldots\}$ is a square in $K$, $p_n(x)$ is irreducible for all $n \geq 1$ and $G_{\infty}$ has finite index in $C_{\infty}$.
\end{corollary}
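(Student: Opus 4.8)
The three assertions are tightly linked through a single input: once we know that none of $-b$, $\delta_n$, and $-b\delta_n$ is a square in $K$ for $n \geq 2$, Theorem~\ref{irredthm} delivers irreducibility of every $p_n$, the set $\{\pm\delta_n : n \geq 2\}$ contains no square, and Theorem~\ref{finiteindex} delivers $[C_\infty:G_\infty] < \infty$---the last provided also that $\phi$ is not post-critically finite. So the plan is to verify those non-square conditions and then quote the two theorems.

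First I would fix a real embedding $\sigma\colon K \hookrightarrow \mathbb{R}$, so that $-b = -1$ maps to $-1 < 0$ and is therefore not a square in $K$. Next, whichever of Theorems~\ref{three}--\ref{fixedpoint} is in force---taking, in the case of Theorem~\ref{three}, the clause that assumes $v_\p(k) = 0$---already states that $\delta_n \notin K^{*2}$ for all $n \geq 2$, so nothing new is needed there. The remaining point, that $-\delta_n$ is not a square, is the one place where the real embedding is essential: from the recursion $\delta_1 = 2k^2$, $\delta_n = \delta_{n-1}^2 + \epsilon_{n-1}^2$ ($n \geq 2$) of Theorem~\ref{finiteindex1}, applying $\sigma$ and inducting from $\sigma(\delta_1) = 2\sigma(k)^2 > 0$ gives
\[
\sigma(\delta_n) = \sigma(\delta_{n-1})^2 + \sigma(\epsilon_{n-1})^2 \geq \sigma(\delta_{n-1})^2 > 0
\]
for all $n \geq 1$; hence $-b\delta_n = -\delta_n$ is negative under $\sigma$ and cannot be a square in $K$.

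With the three non-square conditions in hand I would finish as follows. Theorem~\ref{irredthm} gives irreducibility of $p_n$ for every $n \geq 2$, and $p_1 = k(x^2+1)$ is irreducible because $-1$ is not a square; combined, this is the irreducibility claim, and the statement about $\{\pm\delta_n\}$ has already been proved. Applying Theorem~\ref{finiteindex} with $b = 1$ then yields the finite-index conclusion. The step I expect to demand the most care---indeed the only one not purely mechanical---is checking the hypothesis of Theorem~\ref{finiteindex} that $\phi$ is not post-critically finite: by Proposition~\ref{pcf} such maps $k(x^2+1)/x$ are confined to a finite explicit list (over $\Q$, only $k = \pm\tfrac12$), so I would either carry ``$\phi$ not post-critically finite'' as an explicit hypothesis or observe that in each of Theorems~\ref{three}--\ref{fixedpoint} the quantity whose prime divisors are used (such as $2k \mp 1$ or $2k^2 \pm k + 1$) must be nonzero, which rules those finitely many $k$ out. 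Everything else is a direct assembly of results already established.
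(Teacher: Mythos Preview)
Your argument is essentially the paper's: the real embedding gives $-1 \notin K^{*2}$ and, via $\delta_n = k^2(P_{n-1}(1,1)^2 + Q_{n-1}(1,1)^2) > 0$, also $-\delta_n \notin K^{*2}$; the cited theorem supplies $\delta_n \notin K^{*2}$; and then Theorems~\ref{irredthm} and~\ref{finiteindex} finish. You are in fact more careful than the paper, which simply writes ``Theorem~\ref{finiteindex} applies'' without addressing the post-critically finite hypothesis that theorem requires.

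One correction to your treatment of that hypothesis: your proposed resolution~(ii) does not cover Theorem~\ref{three}. Its hypotheses---existence of $\p$ with $\#\calO_K/\p = 3$ and $v_\p(k) = 0$---involve no auxiliary quantity like $2k \mp 1$ whose nonvanishing excludes the PCF values, and over $\Q$ with $\p = (3)$ they are satisfied by $k = 1/2$. But for $k = 1/2$ one has $\phi(1) = 1$, so $\phi$ \emph{is} post-critically finite; moreover the substitution $x = i\cot\theta$ conjugates $\phi$ to angle-doubling, giving $K_n \subseteq \Q(\zeta_{2^{n+2}})$ and hence $[C_\infty : G_\infty] = \infty$. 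So the corollary as literally stated needs the not-PCF assumption carried explicitly, exactly as in your option~(i); the paper is tacitly relying on the standing context of Conjecture~\ref{mainconj}.
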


\begin{proof}
Because $K$ has a real embedding, $-1$ is not a square in $K$, and by hypothesis we have that $\delta_n$ is not a square for all $n \geq 2$.  Moreover, 
\[
\delta_n = kP_n(1,1) = k^2(P_{n-1}(1,1)^2 + Q_{n-1}(1,1)^2) > 0  \text{ for all } n.
\]
  Hence $-\delta_n$ cannot be a square in $K$ either.  Thus by Theorem \ref{irredthm}, $p_n(x)$ is irreducible for all $n \geq 1$.  Theorem \ref{finiteindex} applies as well, proving the Corollary.  
\end{proof}

\begin{remark} In the case $K = \Q$, Corollary \ref{irredfam} applies to most values of $k$.  For instance, when $k$ is a positive integer, Theorem \ref{fixedpoint} alone applies to all $k \leq 10000$ except for $55$ values of $k$.  Of these, the third part of Theorem \ref{fiveseven} eliminates $21$ values.  

The remaining 34 values can be ruled out with additional congruences.  As in the argument of Theorem \ref{fiveseven}, one can compute the eventually periodic sequence of ordered pairs $(\delta_n \bmod{p}, \epsilon_n \bmod{p})$ and check whether $\delta_n$ is a square mod $p$ for elements in the cycle and also in the tail, i.e., those elements before the cycle begins.  For instance, when $p = 11$ and $k \equiv \pm 1 \bmod{11}$, the sequence has a tail of length 2 and a cycle of length 4.  The four elements of the cycle have $\delta_n$ not a square, and while the second element of the tail is a square, we need not worry about $\delta_2$ being a square, since $4k^2 + 1$ is not a square for $k$ a positive integer. Hence $\delta_n$ is not a square for all $n$ when $k \equiv \pm 1 \bmod{11}$.  This eliminates 11 of the 34 remaining $k$ values.  

The 23 values of $k$ that still remain may be handled with congruences involving higher moduli, as given in Table \ref{higherk}.  The second column is the modulus $p$ of the congruence, the third and fourth columns give the tail length and cycle length of the sequence $(\delta_n \bmod{p}, \epsilon_n \bmod{p})$, and the fifth column gives the $n$, if any, such that $\delta_n$ is a square modulo $p$.  When there is a prime $p < 200$ such that both the tail and cycle contain no $\delta_n$ that are squares modulo $p$, we have listed that prime.  Otherwise, we have chosen $p$ to minimize the exceptional $n$, which are always at most 3.  Note that by Theorem \ref{three}, $\delta_1$ and $\delta_3$ are not squares, and as noted in the previous paragraph $\delta_2$ cannot be a square for $k$ a positive integer.  We remark that the $\delta_n$ need not be distinct for different values of $n$, which explains why it is reasonable to have long cycles not containing square $\delta_n$, as in the case of $k = 840, p = 197$ or $k = 1620, p = 37$.  

\begin{table}
\begin{tabular}{|c|c|c|c|c|}
\hline
$k$ & $p$ & tail length & cycle length & exceptional $n$\\
\hline
444 & $61$ & $0$ & $4$ &   \\
\hline
840 & $197$ & $1$ & $84$ & \\
\hline
1620 & $37$ & $0$ & $36$ &  \\
\hline
1764 & $83$ & $0$ & $60$ &  \\
\hline
3000 & $13$ & $1$ & $12$ &  \\
\hline
3336 & $37$ & $2$ & $6$ & $n = 2$ \\
\hline
4176 & $13$ & $1$ & $12$ & \\
\hline
4224 & $19$ & $0$ & $6$ &  \\
\hline
4620 & $41$ & $4$ & $4$ & $n = 1,2$ \\
\hline
4704 & $43$ & $2$ & $6$ &  \\
\hline
5184 & $13$ & $1$ & $12$ &  \\
\hline
5904 & $31$ & $3$ & $4$ & $n = 1,2$ \\
\hline
6240 & $17$ & $4$ & $4$ & $n = 1$ \\
\hline
6384 & $37$ & $4$ & $2$ & $n = 2,3$ \\
\hline
6996 &$71$ & $2$ & $4$ & $n = 1$ \\
\hline
7224 & $17$ & $4$ & $4$ & $n = 1$ \\
\hline
7620 & $31$ & $2$ & $4$ & $n = 1$ \\
\hline
7836 & $13$ & $1$ & $12$ &  \\
\hline
7956 & $83$ & $1$ & $60$ &  \\
\hline
8004 & $31$ & $2$ & $4$ & $n=1$ \\
\hline
8316 & $19$ & $0$ & $6$ &  \\
\hline
9720 & $131$ & $3$ & $12$ &  \\
\hline
9804 & $29$ & $1$ & $12$ &  \\
\hline
\end{tabular}

\bigskip

\caption{Congruences used to show $[G_\infty : C_\infty]$ is finite for given values of $k$.}  \label{higherk}
\end{table}

\end{remark} 

We now wish to apply Theorem \ref{specmaxcor1} to show that $G_\infty \cong C_\infty$ for certain values of $k$, which demands finding a prime ideal dividing $\delta_n$ to odd multiplicity but not dividing any $\delta_m$ for $m \neq n$.  In light of Lemma \ref{relprime}, it is sufficient to know that $v_\p(k) = 0$ to show that $\p$ must divide $(\delta_n)$ for at most one $n$.  So if $\p \mid P_n(1,1)$, it is advantageous to know that $\p \nmid (k)$.  We thus study the divisibility by $k$ of the coefficients of $P_n(X,Y)$ and $Q_n(X,Y)$, given in the following data and Lemma. 

The special case $k =1$ (corresponding to $\phi(x) = (x^2 + 1)/x$) plays an important role.  We thus put 
\begin{align*}
a_n & = P_n(1,1) |_{k=1} & b_n & = Q_n(1,1) |_{k=1}.
\end{align*}  
Note that $a_1 = 2$ and $b_1 = 1$ and 
\begin{align} \label{recurrence}
a_n & = a_{n-1}^2 + b_{n-1}^2 & b_n & = a_{n-1}b_{n-1}
\end{align}
for $n \geq 2$.  We have for instance $a_2 = 5$, $b_2 = 2$, $a_3 = 29$, and $b_3 = 10$.   

Another important quantity is the constant term of $S_n(1,1)$, regarded as a polynomial in $k$.  Set
\begin{align*}
\sigma_n & = S_n(1,1)|_{k=0} & \tau_n & = T_n(1,1)|_{k=0}.
\end{align*} 
We have $\sigma_1 = 2$, $\tau_1 = 1$, and it follows from Lemma \ref{an&bn} that for all $j \geq 1$, 
\begin{align} \label{sigmatau}
			\sigma_{2j+1} &= 
			\sigma_{2j}^2 + \tau_{2j}^2 &
			\sigma_{2j} &= \tau_{2j-1}^2 &
		\tau_j &= \sigma_{j-1}\tau_{j-1}.
\end{align}
For example, we have $\sigma_2 = 1$, $\tau_2 = 2$, $\sigma_3 = 5$, $\tau_3 = 2$, $\sigma_4 = 4$, and  $\tau_4 = 10$.  

The following lemma relates these quantities.  
\begin{lemma} \label{relation}
Let $a_n$ and $\sigma_n$ be defined as above, and let $c$ be the smallest integer that is at least $n/2$.  Then for each $n \geq 2$, $\sigma_n = \prod_{i = 1}^{c} a_i^{e_i}$, with $e_c = 1$ when $n$ is odd
and $e_c = 0$ when $n$ is even.  In particular, $\sigma_n$ is a product of powers of the $a_i$ with $i < n/2 + 1$.  
\end{lemma}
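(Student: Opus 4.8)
The plan is to analyze the odd- and even-indexed subsequences of $(\sigma_n)$ and $(\tau_n)$ separately, since the recursions \eqref{sigmatau} treat the two parities differently. Set $u_j = \sigma_{2j-1}$ and $v_j = \tau_{2j-1}$. Using \eqref{sigmatau} to eliminate the even-indexed terms (namely $\sigma_{2j-2} = \tau_{2j-3}^2$ and $\tau_{2j-2} = \sigma_{2j-3}\tau_{2j-3}$), one obtains the coupled recursion
\[
u_j = v_{j-1}^2\bigl(u_{j-1}^2 + v_{j-1}^2\bigr), \qquad v_j = u_{j-1}\,v_{j-1}^3 \qquad (j \geq 2),
\]
with $u_1 = \sigma_1 = 2 = a_1$ and $v_1 = \tau_1 = 1 = b_1$. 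Recall also (from $b_n = a_{n-1}b_{n-1}$ and $b_1 = 1$) that $b_n = \prod_{i=1}^{n-1} a_i$, and that $a_n = a_{n-1}^2 + b_{n-1}^2$.

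The key step is the following sub-claim, proved by induction on $j \geq 1$: there is a monomial $M_j$ in the $a_i$ with $i \leq j-1$ (a product of non-negative powers of such $a_i$, with $M_1 = 1$) for which $u_j = a_j M_j$ and $v_j = b_j M_j$. For the inductive step, the identity $u_j^2 + v_j^2 = M_j^2(a_j^2 + b_j^2) = M_j^2 a_{j+1}$ gives $u_{j+1} = v_j^2 M_j^2 a_{j+1} = a_{j+1}\,(b_j^2 M_j^4)$, and $v_{j+1} = u_j v_j^3 = a_j b_j^3 M_j^4 = b_{j+1}\,(b_j^2 M_j^4)$ using $b_{j+1} = a_j b_j$; so $M_{j+1} := b_j^2 M_j^4$ does the job, and since $b_j$ is a product of $a_i$ with $i \leq j-1$, so is $M_{j+1}$.

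Granting the sub-claim, the Lemma is immediate by separating parities. If $n = 2j+1$ is odd then $\sigma_n = u_{j+1} = a_{j+1} M_{j+1}$; here $c = \lceil n/2 \rceil = j+1$, the factor $M_{j+1}$ involves only $a_i$ with $i \leq j < c$, so the exponent $e_c$ of $a_c = a_{j+1}$ equals $1$ and every $a_i$ occurring has $i \leq c$. If $n = 2j$ is even then $\sigma_n = \sigma_{2j} = \tau_{2j-1}^2 = v_j^2 = (b_j M_j)^2$, which involves only $a_i$ with $i \leq j-1 < j = c$, so $e_c = 0$. In either case only $a_i$ with $i \leq \lceil n/2 \rceil = c$, equivalently $i < n/2 + 1$, appear, which is the final assertion.

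The main obstacle is precisely that the statement does not go through for $\sigma_n$ alone by a direct strong induction: computing $\sigma_{2j+1} = \tau_{2j-1}^2\bigl(\sigma_{2j-1}^2 + \tau_{2j-1}^2\bigr)$ one needs $\sigma_{2j-1}^2 + \tau_{2j-1}^2$ to factor through $a_j^2 + b_j^2 = a_{j+1}$, and this requires knowing that $\sigma_{2j-1}$ and $\tau_{2j-1}$ carry the same cofactor $M_j$ beyond their ``leading'' pieces $a_j$ and $b_j$. Packaging this into the coupled pair $(u_j, v_j)$ and the sub-claim about $M_j$ is the crux; the parity bookkeeping afterwards is routine.
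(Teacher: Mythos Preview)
Your proof is correct and shares the same core idea as the paper's: the crucial observation is that the odd-indexed pair $(\sigma_{2j-1},\tau_{2j-1})$ is a common scalar multiple of $(a_j,b_j)$. The paper records this as the ratio identity $\sigma_{2j+1}/\tau_{2j+1} = a_{j+1}/b_{j+1}$, proved by the same manipulation you use to derive your coupled recursion for $(u_j,v_j)$, and then runs a separate strong induction on the statement of the Lemma via the identity $\sigma_{2j+1}\,a_j\cdots a_1 = \sigma_{2j}\cdots\sigma_1\,a_{j+1}$. You instead name the common cofactor explicitly as $M_j$ and track its recursion $M_{j+1}=b_j^2 M_j^4$; this buys you a cleaner endgame, since both parities of $\sigma_n$ are then read off directly from $u_{j+1}=a_{j+1}M_{j+1}$ and $\sigma_{2j}=v_j^2=(b_jM_j)^2$ without a second induction or any divisibility/cancellation argument. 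The two arguments are equivalent in spirit, but your packaging is tidier.
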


\begin{proof}
The third part of~\eqref{sigmatau} implies that $\tau_j = \sigma_{j-1} \cdots \sigma_1$ for all $j$.  The second part of~\eqref{sigmatau} then gives that 
\begin{equation} \label{evencase}
\sigma_{2j} = (\sigma_{2j-2}\cdots \sigma_1)^2.
\end{equation}
We claim that $\sigma_{2j+1}/\tau_{2j+1} = a_{j+1}/b_{j+1}$.  For $j = 1$ we have
$\sigma_3/\tau_3 =  5/2 = a_2/b_2$, so the claim holds.  Now
$$\frac{\sigma_{2j+1}}{\tau_{2j+1}} = \frac{\sigma_{2j}^2 + \tau_{2j}^2}{\sigma_{2j}\tau_{2j}} = \frac{\tau_{2j-1}^4 + \sigma_{2j-1}^2\tau_{2j-1}^2}{\tau_{2j-1}^3\sigma_{2j-1}} = \frac{\tau_{2j-1}^2 + \sigma_{2j-1}}{\tau_{2j-1}\sigma_{2j-1}} = \frac{1 + (\sigma_{2j-1}/\tau_{2j-1})^2}{\sigma_{2j-1}/\tau_{2j-1}}.$$ 
By inductive assumption the last expression becomes $(1 + a_{j}^2/b_{j}^2)/(a_j/b_j)$, and clearing denominators gives $(b_{j}^2 + a_{j}^2)/(b_{j} a_j)$.  The claim now follows from \eqref{recurrence}.
The claim, together with the fact that $b_{j+1} = a_j \cdots a_1$ and $\tau_{2j+1} = \sigma_{2j} \cdots \sigma_1$, gives
\begin{equation} \label{oddcase}
\sigma_{2j+1}a_j \cdots a_1= \sigma_{2j} \cdots \sigma_1 a_{j+1}.
\end{equation}

We now prove the Lemma by induction.  Since $\sigma_1 = 2 = a_1^1$ and $\sigma_2 = 1 = a_1^0$, the Lemma holds in these cases.  Suppose now that the Lemma holds for all $\sigma_i$ with $i < n$.  If $n = 2j$ for some $j$, then $c = j$ and \eqref{evencase} yields that $\sigma_{2j}$ is a product of powers of the $a_i$.  The maximum index occurring in the right-hand side of \eqref{evencase} is due to $\sigma_{2j-3}$, for which the smallest integer that is at least $(2j-3)/2$ is $j-1$, which is the same as $c-1$.  Hence $\sigma_{2j}$ is a product of powers of the $a_i$ with $i \leq c-1$, as desired.

If $n = 2j+1$, then $c = j+1$.  On the right-hand side of \eqref{oddcase}, we have by inductive hypothesis 
\[
	a_j \mid \sigma_{2j-1}, \quad a_{j-1} \mid \sigma_{2j-3},  \quad \ldots \quad 
	a_1 \mid \sigma_1.
\]
  Hence we may cancel the $a_j \cdots a_1$ on the left-hand side of \eqref{oddcase} to get that $\sigma_{2j+1}$ is a product of powers of the $a_i$.  The largest index occurring on the right-hand side of \eqref{oddcase} is $j+1$, which equals $c$.  Moreover, by inductive hypothesis the largest index appearing in any of the other factors is $j$, showing that $a_{j+1}$ appears to only the first power.  
\end{proof}

\begin{theorem} \label{maxthm}
Let $b = 1$ and assume that $-1$ is not a square in $K$ and each of the fractional $\calO_K$-ideals $(\delta_2), (\delta_3), \ldots, (\delta_m)$ is not the square of a fractional $\calO_K$-ideal.  Assume also that $v_\p(k) = 0$ for all primes $\p$ dividing $a_n := P_n(1,1) |_{k=1}$ for some $n < m/2 + 1$.  Then $G_m \cong C_m$.
\end{theorem}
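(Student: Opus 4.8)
The plan is to induct on $n$ and show that $G_n=C_n$ (as subgroups of $\Aut(T_n)$, via the inclusion $G_n\hookrightarrow C_n$ recorded at the start of Section~\ref{galaut}) for every $n$ with $1\le n\le m$; the case $n=m$ is the assertion. For $n=1$: since $b=1$ and $-1$ is not a square in $K$, the quadratic $p_1$ is irreducible, so $[K_1:K]=2=\#C_1$ and $G_1=C_1$. For $n=2$: because $(\delta_2)$ is not the square of a fractional $\calO_K$-ideal, a square element would make $(\delta_2)=(z)^2$, so neither $\delta_2$ nor $-\delta_2=-b\delta_2$ is a square in $K$; together with $-b=-1$ not a square, the computation in the proof of Theorem~\ref{irredthm} shows $p_2$ is irreducible, and since $\deg p_2=4$, $G_2\subseteq C_2$, and $\#C_2=4$, we get $[K_2:K]=4$, i.e. $G_2=C_2$.

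For the inductive step fix $n$ with $3\le n\le m$ and assume $G_{n-1}=C_{n-1}$. First I would check that $p_{n-1}$ is irreducible: for each $i$ with $2\le i\le n-1\le m-1$ the ideal $(\delta_i)$ is not the square of a fractional ideal, so neither $\delta_i$ nor $-b\delta_i=-\delta_i$ is a square in $K$, and $-b=-1$ is not a square, so Theorem~\ref{irredthm} applies. The substance of the proof is then to check the hypotheses of Theorem~\ref{specmaxcor1}, namely to produce a prime $\p$ of $K$ with $v_\p(\delta_n)$ odd, with $v_\p(\delta_j)=0$ for $1\le j\le n-1$, and with $v_\p(k)=v_\p(b)=v_\p(2)=0$. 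By Lemma~\ref{an&bn}, $\delta_n=k^{s(n)+1}S_n(1,1)$ and $s(n)+1$ is even, so $(\delta_n)$ is the square of a fractional ideal if and only if $(S_n(1,1))$ is; since the former fails, there is a prime $\p$ with $v_\p(S_n(1,1))$ odd.

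The key point is that every such $\p$ satisfies $v_\p(k)=0$. If $v_\p(k)>0$, write $S_n(1,1)=\sigma_n+kR$ with $\sigma_n=S_n(1,1)|_{k=0}$ the constant term and $R\in\Z[k]$; then $v_\p(kR)\ge v_\p(k)>0$, so $v_\p(S_n(1,1))>0$ forces $v_\p(\sigma_n)>0$, and by Lemma~\ref{relation} $\sigma_n$ is a product of powers of the $a_i$ with $i<n/2+1\le m/2+1$, so $\p\mid a_i$ for such an $i$ and the hypothesis gives $v_\p(k)=0$, a contradiction. If $v_\p(k)<0$, then $\p\nmid 2$: indeed $a_1=P_1(1,1)|_{k=1}=2$ and $1<m/2+1$, so the hypothesis forces $v_\p(k)=0$ for every $\p\mid 2$; but for $\p\nmid 2$ the leading coefficient of $S_n(1,1)$ in $k$ is a power of $2$, hence a $\p$-unit, and its $k$-degree is even, so $v_\p(S_n(1,1))$ is even — again a contradiction. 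Thus $v_\p(k)=0$, whence $v_\p(\delta_n)=v_\p(S_n(1,1))$ is odd; $v_\p(b)=v_\p(1)=0$; by Lemma~\ref{relprime} (with $v_\p(k)=0$) the prime $\p$ divides no $\delta_j$ with $j\ne n$, so $v_\p(\delta_j)=0$ for $1\le j\le n-1$; and $v_\p(2)=0$, since otherwise $\p\mid 2$ would give $v_\p(k)=0$ by hypothesis, and then the induction used in the proof of Theorem~\ref{nosquare} on $P_n(1,1)=k(P_{n-1}(1,1)^2+Q_{n-1}(1,1)^2)$ and $Q_n(1,1)=P_{n-1}(1,1)Q_{n-1}(1,1)$ yields $v_\p(P_n(1,1))=0$ for all $n\ge 2$, contradicting that $v_\p(\delta_n)$ is odd.

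With $\p$ in hand, Theorem~\ref{specmaxcor1} gives $[K_n:K_{n-1}]=2^{2^{n-2}}$, which by Corollary~\ref{autpropcor} equals $\#\ker(C_n\to C_{n-1})$. Hence $\#G_n=[K_n:K_{n-1}]\,\#G_{n-1}=2^{2^{n-2}}\#C_{n-1}=\#C_n$ by the inductive hypothesis, and since $G_n\subseteq C_n$ we conclude $G_n=C_n$. Taking $n=m$ completes the induction. I expect the main obstacle to be the third paragraph — ensuring that the prime witnessing the odd valuation of $\delta_n$ avoids both $k$ and $2$; the crucial input is Lemma~\ref{relation}, which confines the ``new'' non-square part of $\delta_n$ to primes dividing $a_i$ for $i$ at most about $m/2$, exactly the range on which the hypothesis controls $v_\p(k)$, while the presence of $a_1=2$ in that range together with the reduction argument behind Theorem~\ref{nosquare} rules out the prime $2$.
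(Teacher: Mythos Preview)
Your proof is correct and follows essentially the same route as the paper: find a prime $\p$ at which $S_n(1,1)$ has odd valuation, use Lemma~\ref{relation} to rule out $\p\mid(k)$, then invoke Lemma~\ref{relprime} and Theorem~\ref{specmaxcor1}. Two minor remarks: you are more careful than the paper in explicitly ruling out $v_\p(k)<0$ (the paper only treats $\p\mid(k)$), and your argument for $v_\p(2)=0$ via the induction in Theorem~\ref{nosquare} works but is longer than necessary --- once Lemma~\ref{relprime} gives $v_\p(\delta_1)=0$, the identity $\delta_1=2k^2$ with $v_\p(k)=0$ yields $v_\p(2)=0$ directly, which is how the paper concludes.
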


\begin{proof}
We begin by noting that if $m = 1$ or $m = 2$, then $G_m \cong C_m$ is equivalent to $p_n(x)$ being irreducible, which is ensured by $-1$ and $\pm \delta_2$ not being squares in $K$ (see first paragraph of the proof of Theorem \ref{irredthm}).  

From the proof of Theorem \ref{irredthm} it follows that $-1, \pm \delta_2, \pm \delta_3, \ldots, \pm \delta_m$ not being squares in $K$ implies $p_n(x)$ is irreducible for all $n \leq m$.  We may thus apply Theorem \ref{specmaxcor1}, provided that for each $n$ with $3 \leq n \leq m$ we can find a prime $\p$ with $v_\p(k) = v_\p(2) = 0$, $v_\p(\delta_n)$ odd, and $v_\p(\delta_t) = 0$ for each $t \leq n$.  

Now,
\[
\delta_n = kP_n(1,1) = k^{s(n) + 1}S_n(1,1)
\]
 by Lemma \ref{an&bn}.  Note that $s(n) + 1$ is always even, and thus the squarefree part of the fractional ideal factorization of $(\delta_n)$ (which is non-trivial by hypothesis) divides $S_n(1,1)$.  Therefore there is a prime ideal $\q \subset \calO_K$ with $\q \, || \, (S_n(1,1))$, so that $v_\q(\delta_n) = 1$.  

We first show that $\q \nmid (k)$.  If $\q \mid (k)$, then $0 \equiv S_n(1,1) \equiv S_n(1,1)|_{k=0} \bmod{\q}$.  Thus $\q \mid (\sigma_n)$ in the notation of Lemma \ref{relation}, and that Lemma shows that $\q \mid (a_n)$ for some $n < m/2 +1$, contradicting our hypotheses.  

Because $\q \nmid (k)$, we may apply Lemma \ref{relprime} to show that $\q \nmid (P_i(1,1))$ for all $i \ \neq n$.  It then follows from $kP_i(1,1) = \delta_i$ that $v_\q(\delta_i) = 0$ for all $i \neq n$.  
In particular, since $P_1(1,1) = 2k$, we have that $\q \nmid (2)$, completing the proof.    
\end{proof}

%


\begin{corollary} \label{maincorQ}
Suppose that $K = \Q$.  Then there is a density zero set of primes $\Sigma$, consisting of $2$ and primes congruent to $1$ modulo $4$, such that if $v_p(k) = 0$ for all $p \in \Sigma$, then $G_\infty \cong C_\infty$.
\end{corollary}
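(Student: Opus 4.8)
The plan is to take $\Sigma$ to be the set of primes dividing $a_n = P_n(1,1)|_{k=1}$ for some $n \geq 1$. Since $\phi^*(x) = (x^2+1)/x$ has $\Res(q,p) = kb = 1$, hence good reduction at every odd prime, and since $\phi^{*n}(1) = a_n/b_n$ in lowest terms, $\Sigma$ is precisely the set of prime divisors of the numerators of the forward orbit of the critical point $1$ under $\phi^*$. With this choice three things must be checked: that $\Sigma \subseteq \{2\} \cup \{p \equiv 1 \bmod 4\}$ and $2 \in \Sigma$; that $v_p(k) = 0$ for all $p \in \Sigma$ forces $G_\infty \cong C_\infty$; and that $\Sigma$ has density zero. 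For the first, I would use the recursion $a_n = a_{n-1}^2 + b_{n-1}^2$, $b_n = a_{n-1}b_{n-1}$ from~\eqref{recurrence}, with $a_1 = 2$, $b_1 = 1$: an easy induction gives that $a_n$ is odd and $b_n$ even for $n \geq 2$, and if a prime $p \equiv 3 \bmod 4$ divided $a_n$ for some $n \geq 2$, then ($a_n$ being a sum of two squares) $p \mid a_{n-1}$ and $p \mid b_{n-1} = a_{n-2}b_{n-2}$, which together with $p \mid a_{n-1} = a_{n-2}^2 + b_{n-2}^2$ forces $p \mid a_{n-2}$; descending we reach $p \mid a_1 = 2$, a contradiction. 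As $a_1 = 2$, this gives the claim.

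For the second ingredient, assume $v_p(k) = 0$ for all $p \in \Sigma$ and fix $m$. I would verify the three hypotheses of Theorem~\ref{maxthm} over $K = \Q$: that $-1 \notin \Q^{*2}$; that $v_\p(k) = 0$ for every prime $\p$ dividing some $a_n$ with $n < m/2 + 1$, which holds by the definition of $\Sigma$; and that none of $(\delta_2), \dots, (\delta_m)$ is the square of a fractional ideal. The last point holds because $2 = a_1 \in \Sigma$ gives $v_2(k) = 0$, so Theorem~\ref{nosquare} shows $\delta_n \notin \Q^{*2}$ for $n \geq 2$, while $\delta_n = k^2(P_{n-1}(1,1)^2 + Q_{n-1}(1,1)^2) > 0$ rules out $-\delta_n \in \Q^{*2}$; over $\Z$ this says exactly that $(\delta_n)$ is not a square fractional ideal. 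Theorem~\ref{maxthm} then yields $G_m \cong C_m$ under the natural inclusion $G_m \hookrightarrow C_m$, and since these inclusions are compatible with the restriction maps $G_{m+1} \to G_m$ and $C_{m+1} \to C_m$, passing to inverse limits gives $G_\infty \cong C_\infty$.

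For the density statement I would first observe that $k = 1$ satisfies $v_p(1) = 0$ for all $p$, so Theorem~\ref{nosquare} applied at $p=2$, the positivity of $\delta_n$, and $-1 \notin \Q^{*2}$ imply via Theorem~\ref{irredthm} that $p_n$ is irreducible for all $n$, whence Theorem~\ref{finiteindex} gives $[C_\infty : G_\infty] < \infty$ for the pair $(\phi^*, 0)$. Since $\Sigma$ is the set of prime divisors of the forward orbit of the critical point $1$ of $\phi^*$, I would then invoke Theorem~\ref{upbound}, which bounds the density of such a set of prime divisors in terms of the arboreal image $G_\infty$ of $(\phi^*,0)$, more precisely by the limiting proportion of elements of $G_n$ fixing a vertex at level $n$. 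This proportion tends to $0$: by Proposition~\ref{autprop} and Corollary~\ref{autpropcor}, an element of $C_n$ outside $\ker(C_n \to C_1)$ interchanges the two subtrees below the root and so fixes no vertex at level $n$, while $\ker(C_n \to C_1) \cong \Aut(T_{n-1})$, in which the proportion of elements fixing a bottom-level vertex satisfies the recursion $X_m = X_{m-1}(1 - \tfrac12 X_{m-1})$ and hence tends to $0$; as $[C_n : G_n]$ is bounded, the proportion for $G_n$ also tends to $0$, so the density of $\Sigma$ is zero.

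I expect the main obstacle to be this last step: one needs Theorem~\ref{upbound} in precisely the shape that converts the finite-index statement for $(\phi^*,0)$ (the case $k = 1$) into the density-zero conclusion for $\Sigma$, together with the elementary but slightly delicate group-theoretic limit showing that the proportion of elements of $\Aut(T_m)$ — equivalently of $C_m$ — fixing a vertex at the top level tends to zero. The remaining steps are bookkeeping with the cited results.
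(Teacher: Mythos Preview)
Your proposal is correct and follows essentially the paper's approach: the same choice of $\Sigma$, the same appeal to Theorem~\ref{nosquare} (via $2\in\Sigma$) and Theorem~\ref{maxthm} to get $G_m\cong C_m$, and the same use of the orbit of $1$ under $\phi^*$ to control the density of $\Sigma$.

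There is one place where the paper takes a shorter route. For the density-zero step you invoke Theorem~\ref{finiteindex} to obtain only $[C_\infty:G_\infty]<\infty$ for $k=1$, and then unwind Theorem~\ref{upbound} together with the group-theoretic recursion on fixed-point proportions in $\Aut(T_m)$, finally using boundedness of $[C_n:G_n]$ to transfer the limit from $C_n$ to $G_n$. The paper instead observes that $k=1$ trivially satisfies $v_p(k)=0$ for all $p\in\Sigma$, so the part of the corollary you have \emph{already} proved gives $G_\infty\cong C_\infty$ (not just finite index) for $k=1$; then Theorem~\ref{densitythm} applies directly as a black box. Your route is slightly more general (it would work even if one only knew finite index for $k=1$), but it duplicates the content of Theorem~\ref{densitythm}. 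For the $p\equiv 3\bmod 4$ exclusion, the paper argues by first noting $\gcd(a_n,b_n)=1$ (an easy induction) and then that a sum of two coprime squares has no such prime factor; your descent reaches the same conclusion and implicitly contains the coprimality, though you might make the ``in lowest terms'' claim explicit, since it is also what identifies $\Sigma$ with the set of primes to which Theorem~\ref{upbound} applies.
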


\begin{proof} Let $\Sigma$ be the set of primes dividing $a_n := P_n(1,1) |_{k=1}$ for at least one $n \geq 1$.   Note that $a_1 = 2$, and so by assumption $v_2(k) = 0$.  Hence by Theorem \ref{nosquare} and the fact that $\delta_1 = 2k^2$, $\delta_n$ is not a square for all $n \geq 1$.  Because $\delta_n > 0$ for all $n$, this shows that the fractional ideals $(\delta_n)$ are all not squares.  We now apply Theorem \ref{maxthm}, showing that $G_\infty \cong C_\infty$.  

Note that for $k = 1$ it is certainly the case that $v_p(k) = 0$ for all $p \in \Sigma$, and so we have 
$G_\infty \cong C_\infty$ in this case.  However, $a_n$ is the numerator of $\phi^n(1)$ in the case where $k = 1$, and hence Theorem \ref{densitythm} applies to show that the natural density of $\Sigma$ is zero.  

Finally,  a simple induction on the recurrence in~\eqref{recurrence}, shows that $a_n$ and $b_n$ are relatively prime for all~$n$.  Since $a_n$ is the sum of two relatively prime squares, no prime divisors of $a_n$ can be congruent to $3$ modulo 4.  
\end{proof}



\begin{remark} It is easy to see if a given prime $p \in \Z$ belongs to $\Sigma$.  Indeed, letting $\phi(x) = (x^2 + 1)/x$, then we have $\phi^n(1) = a_n/b_n$, provided that $b_n \neq 0$.  Because the preimages of $\infty$ under $\phi$ are $\infty$ and $0$, we have that $b_n = 0$ only when $a_{n-1} = 0$; thus to see if $p \mid a_n$ for some $n$, we need only see if $0$ occurs in the sequence $( \phi^n(1) \bmod{p} )_{n\geq 1}$.  This is easily computable, since the sequence repeats after at most $(p-1)/2$ entries. For instance, the primes in $\Sigma$ less than 2000 are $2, 5, 29, 41, 89, 101, 109, 269, 421, 509, 521, 709$, $929, 941, 1549, 1861$.  Some of these do not divide $a_n$ until $n$ is rather large.  For instance, $929$ divides $a_{42}$, but not $a_n$ for $n < 42$.  
\end{remark}

\begin{remark}
As noted in the Introduction, Corollary \ref{maincorQ} may be far from best possible.  Indeed, we have found no $k \in \Z$ for which $[G_\infty : C_\infty] \geq 2$.  The only $k \in \Q$ where we can be sure this holds are those of the form $a/b$, where $4a^2 + b^2$ is a square, since in that case the numerator of $\phi^2(x)$ factors as two quadratic polynomials (see the remark following Theorem \ref{irredthm}), and note that $kp_2^*(1) = 4a^2 + b^2$, so the image of the action of $\Gal(\overline{\Q}/\Q)$ on the second level of the tree $T_0$ of preimages of zero has order two.  However, the image of $C_\infty$ on the second level of $T_0$ has order four, implying that $[G_\infty : C_\infty] \geq 2$.  
\end{remark}


\section{Density of prime divisors in orbits}

In this section we use the group-theoretic description of $C_n$ given at the beginning of Section \ref{galaut} to show that if $p_n$ is separable and $G_\infty \cong C_\infty$, then for any $a \in K$, the density of the set of primes of $O_K$ dividing some element of the orbit $\{\phi^n(a) : n = 1, 2, \ldots\}$ is zero.  

We begin with a version of \cite[Theorem 2.1]{quaddiv} that applies to a large class of rational functions.  By the {\em natural upper density} of a set of primes $S$ in $O_K$, we mean
\begin{equation} \label{densedef}
D(S) = \limsup_{x \to \infty} \frac{\#\{\p \in S : N(\p) \leq x\}}{\#\{\p : N(\p) \leq x\}},
\end{equation}
where $N(\p) = N_{K/\Q}(\p)$ is the norm of $\p$. 

\begin{theorem} \label{upbound}
Let $\phi \in K(x)$ be a rational function with $p_n$ separable for all $n$, and suppose that 
$\phi^n(\infty) \neq 0$ for all $n > n_0$.  Let $a_n = \phi^n(a_0)$ with $a_0 \in K$.  Then for any $N > n_0$, the density of primes $\p$ of $K$ with $v_\p(a_n) > 0$ for at least one $n \geq 1$ is bounded above by 
\begin{equation} \label{keydense}
\frac{1}{\#G_N} \#\{\sigma \in G_N : \text{$\sigma$ fixes at least one root of $p_N$}\}.
\end{equation}
\end{theorem}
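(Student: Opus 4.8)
The plan is to run the reduction-mod-$\p$ argument of \cite[Theorem~2.1]{quaddiv}, adapted to rational maps and with care taken at the point $\infty$. Fix $N>n_0$ and write $a_n=\phi^n(a_0)$; we assume throughout that $a_n\neq 0$ for all $n\geq 1$ (otherwise $a_0$ is a root of some $p_n$ and every prime divides $a_n$, a degenerate situation not arising in our applications). Let $S$ be the finite set of primes of $K$ consisting of the archimedean places, the primes of bad reduction for $\phi$, the primes ramified in $K_N/K$, the primes dividing $\ell(p_N)\cdot\Disc\ p_N$, the primes $\p$ with $\overline{\phi^N(\infty)}=0$ in $\PP^1(\calO_K/\p)$ (finitely many since $\phi^N(\infty)\neq 0$, because $N>n_0$), and the primes $\p$ with $v_\p(a_n)>0$ for some $1\leq n<N$ (finitely many, since each such $a_n$ is a fixed nonzero element of $K$). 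By \cite[Theorem~2.18]{jhsdynam}, for $\p\notin S$ the map $\phi$ and all of its iterates have good reduction at $\p$, so reduction to $\PP^1(\calO_K/\p)$ (denoted by an overline) commutes with iteration: $\overline{a_n}=\bar\phi^n(\overline{a_0})$ for all $n$, and $v_\p(a_n)>0$ is equivalent to $\overline{a_n}=0$.

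Now fix $\p\notin S$ and suppose $v_\p(a_n)>0$ for some $n\geq 1$. By construction of $S$ we must have $n\geq N$, and then $\bar\phi^N(\overline{a_{n-N}})=\overline{a_n}=0$. Moreover $\overline{a_{n-N}}\neq\infty$: otherwise $0=\overline{a_n}=\bar\phi^N(\infty)=\overline{\phi^N(\infty)}$, forcing $\p\in S$. Hence $\overline{a_{n-N}}$ is a root of $\bar p_N$ lying in the residue field $\calO_K/\p$. Since $\p\nmid\ell(p_N)\Disc\ p_N$ and $\p$ is unramified in $K_N$, the factorization type of $\bar p_N$ modulo $\p$ matches the cycle type of the Frobenius conjugacy class $\mathrm{Frob}_\p\subseteq G_N$ acting on the roots of $p_N$; in particular $\bar p_N$ has a root in $\calO_K/\p$ if and only if $\mathrm{Frob}_\p$ fixes at least one root of $p_N$. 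Therefore
\[
\{\p : v_\p(a_n)>0\text{ for some }n\geq 1\}\ \subseteq\ S\ \cup\ \{\p : \mathrm{Frob}_\p\text{ fixes at least one root of }p_N\}.
\]
The subset of $G_N$ consisting of elements fixing a root of $p_N$ is closed under conjugation, so by the Chebotarev density theorem the second set on the right has natural density equal to $\#\{\sigma\in G_N:\sigma\text{ fixes a root of }p_N\}/\#G_N$. Since $S$ is finite, taking natural upper densities yields the stated bound.

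I expect the only real points of care to be the bookkeeping of $S$ and the role of $\infty$: the hypothesis $N>n_0$ is used precisely to ensure $\phi^N(\infty)\neq 0$, which is what stops $\overline{a_{n-N}}$ from reducing to $\infty$ and thereby falling outside the root set of $\bar p_N$, while the finiteness of $\{n:1\leq n<N\}$ is what confines their contribution to the finite set $S$. Everything else is a routine application of good reduction of iterates together with Chebotarev; note that the group-theoretic structure of $C_n$ developed at the start of Section~\ref{galaut} is not needed for this bound and enters only when it is applied.
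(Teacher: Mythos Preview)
Your argument is correct and essentially the same as the paper's: both use good reduction to commute iteration with reduction mod $\p$, both invoke $N>n_0$ so that $\phi^N(\infty)\neq 0$ rules out the possibility that the relevant preimage reduces to $\infty$, and both finish with Chebotarev. The only cosmetic difference is that the paper argues via the complement, introducing $\Omega_N=\{\p:\p\nmid\Res(p,q)\text{ and }\overline{\phi}^N(y)=0\text{ has no solution in }\mathbb{P}^1(\mathbb{F}_\p)\}$ and computing $D(\Omega_N)$, whereas you work directly with the set of primes dividing some $a_n$; your explicit handling of the degenerate case $a_n=0$ is a small improvement in exposition over the paper, which leaves this implicit.
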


\begin{remark} It is also true that \eqref{keydense} furnishes an upper bound for the density of primes $\p$ such that $0$ is periodic in $O_K/ \p O_K$ under iteration of $\phi$.  This follows from 
the fact that $0$ is periodic in $O_K/ \p O_K$ if and only if $\phi^{-n}(0) \cap O_K$ is non-empty for all $n \geq 1$; cf
\cite[Proposition 3.1]{galmart}.  
\end{remark}

\begin{proof} We denote by $\mathbb{F}_\p$ the field $O_K/ \p O_K$, which is the same as 
$O_{K,\p}/ \p O_{K,\p}$, where $O_{K,\p}$ is the localization of $O_K$ at $\p$.  Any $x \in K$ has a reduction $\overline{x} \in \mathbb{P}^1(\mathbb{F}_\p)$.
Provided that $\p \nmid \Res(p,q)$ (i.e. $\phi$ has {\em good reduction} modulo $\p$), we may reduce the coefficients of $\phi$ modulo $\p$ and obtain a morphism $\overline{\phi} : \mathbb{P}^1(\mathbb{F}_\p)  \to \mathbb{P}^1(\mathbb{F}_\p)$ such that $\overline{\phi}^N = \overline{\phi^N}$ \cite[Theorem 2.15]{jhsdynam}.  

Fix $N > n_0$, and consider 
\[
	\Omega_N 
	= \{\p: \p \nmid \Res(p,q) \; \text{and $\overline{\phi}^N(y) = 0$ has no solution in $	\mathbb{P}^1(\mathbb{F}_\p)$}\}.
\]
If $\p \in \Omega_N$, then we have $\phi^{N + m}(x) = \phi^N(\phi^m(x)) \not\equiv 0 \bmod{\p}$ for all $x \in K$, since otherwise $\overline{\phi}^m(\overline{x}) \in \mathbb{P}^1(\mathbb{F}_\p)$ gives a solution to $\overline{\phi}^N(y) = 0$.  Thus $v_\p(a_{N+m}) \leq 0$ for all $m \geq 0$.  There are only finitely many $\p$ with $v_\p(a_n) > 0$ for some $n < N$, and thus we have 
\begin{equation} \label{krog}
	D(\Omega_N)
	 \leq D\left(\{\p : \text{$v_\p(a_n) \leq 0$ for all $n \geq 1$}\}\right).
\end{equation}

Because $N > n_0$, we have $\phi^N(\infty) \neq 0$, and hence there are only finitely many $\p$ for which 
$\overline{\phi}^N(\infty) = 0$.  But if $\overline{\phi}^N(y) = 0$ has a solution in $\mathbb{P}^1(\mathbb{F}_\p)$, then either $\overline{\phi}^N(\infty) = 0$ or $p_N(x) \equiv 0 \bmod{\p}$ has a solution in $O_K$.  It follows that 
\begin{equation} \label{krog2}
D(\Omega_N) = 1 - D(\{\p : \text{$p_N(x) \equiv 0 \bmod{\p}$ has a solution in $O_K$}\}).
\end{equation}

%

We now use the Chebotarev Density theorem to show that $$D(\{\p : \text{$p_N(x) \equiv 0 \bmod{\p}$ has a solution in $O_K$}\})$$ is given by the expression in \eqref{keydense}, which along with \eqref{krog} and \eqref{krog2} completes the proof.  
Except for finitely many primes ramifying in $K(p_N)$, 
$p_N(x) \equiv 0 \pmod{\p}$ having a solution in $O_K$ is equivalent to $p^N(x)$ having at least one linear factor in $\mathbb{F}_\p[x]$.  Except for possibly finitely many $\p$, this implies that 
$\p O_L = \p_1 \cdots \p_r$, where $L/K$ is obtained by adjoining a root of $p_N$ and at least one of the $\p_i$ has residue class degree one \cite[Theorem 4.12]{narkiewicz}. This is equivalent to the disjoint cycle decomposition of the Frobenius conjugacy class at $\p$ having a fixed point (in the natural permutation representation of $G_N$ acting on the roots of $p_N$).  From the Chebotarev Density Theorem it follows \cite[Proposition 7.15]{narkiewicz} that the density of $\p$ with $\p O_L$ having such a decomposition is the expression in \eqref{keydense}.
\end{proof}

\begin{theorem} \label{densitythm}
Assume the hypotheses of Theorem~$\ref{upbound}$.  Moreover, let $C_n$ be the centralizer in $\Aut(T_n)$ of an involution $\iota \in \Aut(T_n)$ acting non-trivially on $T_1$.  Suppose that $\phi \in K(x)$ satisfies $G_n \cong C_n$ for all $n \geq 1$, and let $a_n = \phi^n(a_0)$ with $a_0 \in K$.  Then 
\begin{equation} \label{krog3}
D(\p \in O_K :  \text{$v_\p(a_n) > 0$ for at least one $n \geq 1$}) = 0.
\end{equation}
\end{theorem}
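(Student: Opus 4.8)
The plan is to combine Theorem~\ref{upbound} with a purely group-theoretic computation inside the groups $C_n$. By Theorem~\ref{upbound}, for every $N > n_0$ the left-hand side of~\eqref{krog3} is at most
\[
\frac{1}{\#G_N}\,\#\{\sigma \in G_N : \text{$\sigma$ fixes at least one root of $p_N$}\},
\]
and since $p_n$ is separable for all $n$, the roots of $p_N$ are exactly the leaves of the complete binary rooted tree $T_N$. As $G_N \cong C_N$ as subgroups of $\Aut(T_N)$, the permutation action of $G_N$ on the roots of $p_N$ is the action of $C_N$ on the leaves of $T_N$. So it suffices to show that the proportion of elements of $C_N$ fixing at least one leaf of $T_N$ tends to $0$ as $N \to \infty$; since the left side of~\eqref{krog3} is independent of $N$, this forces it to be $0$.

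Write $f(n)$ for the proportion of $\tau \in C_n$ fixing at least one leaf of $T_n$, and $g(m)$ for the proportion of $\sigma \in \Aut(T_m)$ fixing at least one leaf of $T_m$. First I would analyze $f(n)$ using Proposition~\ref{autprop} and Corollary~\ref{autpropcor}. The restriction map $C_n \to C_1 \cong \Z/2\Z$ has kernel equal to the set of $\tau \in C_n$ fixing the two subtrees $T_a,T_b$ of $T_n$ rooted at the height-one vertices, and this kernel is isomorphic, via $\tau \mapsto \tau|_{T_a}$, to $\Aut(T_a) \cong \Aut(T_{n-1})$. An element $\tau$ outside this kernel swaps $T_a$ and $T_b$, hence swaps their leaf sets and fixes no leaf of $T_n$. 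For $\tau$ in the kernel, $\tau|_{T_b} = \iota\,(\tau|_{T_a})\,\iota$, so $\tau|_{T_b}$ fixes a leaf of $T_b$ if and only if $\tau|_{T_a}$ fixes a leaf of $T_a$; thus $\tau$ fixes a leaf of $T_n$ exactly when $\tau|_{T_a}$ does. Since the kernel has index $2$ in $C_n$, this gives $f(n) = \tfrac12\, g(n-1)$.

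Next I would establish a recursion for $g(m)$. Writing $\Aut(T_m) \cong (\Aut(T_{m-1}) \times \Aut(T_{m-1})) \rtimes S_2$, with the two $\Aut(T_{m-1})$ factors acting on the two height-$m$ subtrees, the elements swapping the two subtrees fix no leaf, while a non-swapping element $(\sigma',\sigma'')$ fixes a leaf of $T_m$ precisely when $\sigma'$ or $\sigma''$ fixes a leaf of its copy of $T_{m-1}$. By inclusion--exclusion, the proportion of non-swapping elements with this property is $1 - (1 - g(m-1))^2$, so
\[
g(m) = \tfrac12\bigl(1 - (1 - g(m-1))^2\bigr) = g(m-1) - \tfrac12\, g(m-1)^2 ,
\]
with $g(1) = 1/2$. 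Since $0 < g(m-1) < 2$ for all $m \geq 2$, the sequence $(g(m))$ is strictly decreasing and bounded below by $0$, hence converges to a limit $\ell$ with $\ell = \ell - \tfrac12\ell^2$, forcing $\ell = 0$. Therefore $f(N) = \tfrac12\, g(N-1) \to 0$ as $N \to \infty$, and feeding this into the bound of Theorem~\ref{upbound} completes the proof.

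I do not anticipate a serious obstacle here: the only points requiring care are the bookkeeping of which elements of $C_N$ (respectively $\Aut(T_m)$) possess a fixed leaf — in particular the observation that every such element lies in an index-two subgroup, and that ``fixing a leaf'' descends correctly through the top-level branch decomposition via conjugation by $\iota$ — together with the routine verification that the recursion forces $g(m) \to 0$.
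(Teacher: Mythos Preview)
Your proof is correct and follows essentially the same route as the paper: reduce via Theorem~\ref{upbound} and Proposition~\ref{autprop} to the proportion $\tfrac12 g(n-1)$ of fixed-leaf elements in $\Aut(T_{n-1})$, then show this tends to zero. The only difference is cosmetic: the paper cites \cite[Propositions~5.5,~5.6]{galmart} for the behavior of $g(m)$, whereas you derive the recursion $g(m)=g(m-1)-\tfrac12 g(m-1)^2$ directly from the wreath product decomposition and argue the limit is zero by monotonicity---this makes your argument self-contained but is the same computation underneath.
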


\begin{proof}
For $n$ large enough, we have from Theorem \ref{upbound} that \eqref{krog3} is bounded above by
\begin{equation} \label{keydense2}
\frac{1}{\#C_n} \#\{\sigma \in C_n : \text{$\sigma$ fixes at least one top-level vertex in $T_n$}\}.
\end{equation}
If $\sigma \in C_n$ satisfies $\sigma|_{T_1} \neq e$, then clearly $\sigma$ can fix no top-level vertices  of $T_n$.  
On the other hand, if $\sigma|_{T_1} = e$ then $\sigma \in \ker(C_n \to C_1)$.  Therefore by Proposition \ref{autprop}, we have that \eqref{keydense2} is the same as
\begin{equation} \label{keydense3}
b_n := \frac{\#\{\sigma \in \Aut(T_{n-1}) : \text{$\sigma$ fixes at least one top-level vertex in $T_{n-1}$}\}}{2\#\Aut(T_{n-1})} .
\end{equation}
From \cite[Propositions 5.5, 5.6]{galmart}, it follows that $b_n = (1 - c_n)/2$, where $c_n$ is given by the evaluation at $z = 0$ of the $n$th iterate of $f(z) = \frac{1}{2}z^2 + \frac{1}{2}$.  This implies that $c_n \rightarrow 1$, and thus $b_n \rightarrow 0$; indeed, it is enough to note that $f$ maps $I = (0, 1]$ to itself, $f(1) = 1$, and $f$ is increasing on $I$.  Moreover, from \cite[Proposition 5.6, part ii]{galmart} we have $b_n = 1/n + O((\log n)/n^2)$.
\end{proof}

\subsection*{Acknowledgments} The authors thank the Institute for Computational and Experimental Research in Mathematics for an enjoyable semester, during which a revision of this paper was completed.  We also thank the referee for numerous helpful comments.

\end{document}